\newcommand{\beq}{\begin{equation}}
\newcommand{\bey}{\begin{eqnarray}}
\newcommand{\beyy}{\begin{eqnarray*}}
\newcommand{\eeq}{\end{equation}}
\newcommand{\eey}{\end{eqnarray}}
\newcommand{\eeyy}{\end{eqnarray*}}
\newcommand{\N}{{\mathbb N}}
\newcommand{\C}{{\mathbb C}}
\newcommand{\eps}{\varepsilon}
\newtheorem{theorem}{Theorem}[section]
\newtheorem{lemma}[theorem]{Lemma}
\newtheorem{proposition}[theorem]{Proposition}
\newtheorem{remark}[theorem]{Remark}
\newtheorem{hypothesis}[theorem]{Hypothesis}
\newtheorem{corollary}[theorem]{Corollary}
\numberwithin{equation}{section}
\begin{document}

\title{\bf \Large An optimal uniqueness result for Riccati equations arising in abstract parabolic control problems}

\author{Paolo Acquistapace \footnote{address: Paolo Acquistapace, Universit\`a di Pisa ({\em retired}), Dipartimento di Matematica, Largo Bruno Pontecorvo 5, 56127 Pisa, ITALY; mail: paolo.acquistapace(at)unipi.it} and Francesco Bartaloni \footnote{address: Francesco Bartaloni, Universit\`a di Parma, Dipartimento di Matematica, Parco Area delle Scienze 53/A, 43124 Parma, ITALY; mail: francesco.bartaloni(at)unipr.it}}

\date{\empty}

\maketitle
%%---------------------------------------------------------------
\begin{abstract}
An abstract nonautonomous parabolic linear-quadratic regulator problem with very general final cost operator $P_{T}$ is considered, subject to the same assumptions under which a classical solution of the associated differential Riccati equation was shown to exist, in two papers appeared in 1999 and 2000, by Terreni and the first named author. We prove an optimal uniqueness result for the integral Riccati equation in a wide and natural class, filling a gap existing in the autonomous case, too. In addition, we give a regularity result for the optimal state. 
\end{abstract}
%---------------------------------------------------------------
\section*{0\quad Introduction}
%---------------------------------------------------------------
Let $H$, $U$ be Hilbert spaces. For fixed $T>0$ we consider, for every $s\in [0,T[\,$, the following regulator problem: minimize the functional
\beq \label{Js}
J_s(u) = \int_s^T \|M(r)^{1 / 2}y(r)\|_H^2\,dr + \int_s^T \|N(r)^{1 / 2}u(r)\|_U^2\,dr + \|P_T^{1/2}y(T)\|_H^2
\eeq
among all controls $u\in L^2(s,T;U)$, constrained by the following state equation, whose strong form is
\beq \label{state_eq_strong} 
\left\{ \begin{array}{l} y'(t)=A(t)y(t)-A(t)G(t)u(t), \quad t\in \,]s,T],\\[2mm]
y(s)=x, \end{array}\right.
\eeq
and whose mild form is
\beq \label{state_eq_mild} 
y(t) = U(t,s)x - \int_s^t U(t,r)A(r)G(r)u(r)\,dr, \qquad t\in [s,T].
\eeq
Here the operators $\{M(\cdot)\}$ and $P_T $ are linear, non-negative, bounded, selfadjoint  in $H$, the operators $\{N(\cdot)\}$ are linear, positive, bounded, selfadjoint in $U$, $x$ is an element of $H$,  the operators $\{A(r)\}_{r\in[0,T]}$ are generators of analytic semigroups $\{e^{tA(r)}\}_{t\ge 0}$ in $H$, for every $r\in[s,T]$ the family $\{U(r,s)\}_{0\le s\le r}$ is the evolution operator associated to $A(r)$ and the operator $G(r)$ is the ``Green map'' associated to $A(r)$, having the property that $[-A(r)]^\alpha G(r)$ is a bounded operator from $U$ into $H$ for some $\alpha \in \,]0,1 / 2[\,$. \\
The pair \eqref{Js}-\eqref{state_eq_mild} provides an abstract model for a large class of purely parabolic boundary control problems: the realization of $G(t)$ in concrete problems yields the lifting of the nonzero datum at the boundary, i.e. transforms the nonhomogeneous initial-boundary value problem into a homogeneous one by a modification of the right member of the evolution equation. Typical examples are given in \cite[Section 9]{AT1}. \\
Following and generalizing the methods employed in the autonomous case by \cite{LT1}, \cite{LT2} (see also the full description in the book \cite{LT3}), the above problem was analyzed in \cite{AT1} and \cite{AT2}, proving existence and uniqueness of the optimal pair $(\widehat{y},\widehat{u})$, and studying the associated Riccati equation, whose differential form is 
\beq \label{DRE}
\left\{\begin{array}{l} P'(t)+ A(t)^*P(t)+P(t)A(t) \\[2mm]
\qquad \ = -M(t)+P(t)A(t)G(t)N(t)^{-1}G(t)^*A(t)^*P(t), \quad t\in [s,T[\,,\\[2mm]
P(T) = P_T, \end{array}\right.
\eeq 
and whose integral form is 
\beq  \label{IRE} 
\begin{array}{l} \displaystyle P(t) = U(T,t)^*P_TU(T,t) \\[4mm]
\displaystyle \ \ + \int_t^T U(r,t)^*[M(r)-P(r)A(r)G(r) N(r)^{-1}G(r)^*A(r)^*P(r)]U(r,t)\,dr, \quad t\in [s,T]\,.\end{array}
\eeq
In particular, existence and representation of a classical solution $P(\cdot)$ of equation \eqref{IRE} was proved; its uniqueness was guaranteed only under further assumptions on the operator $P_T$. More precisely, it was shown, among other things, that:
\begin{description}
\item[(i)] the constructed solution $P(\cdot)$ of the Riccati equation \eqref{IRE} is classical, i.e. $P(\cdot)$ is continuously differentiable as an ${\cal L}(H)$-valued function and satisfies \eqref{DRE} in the sense of ${\cal L}(H)$, provided the operator $A(t)^*P(t)+P(t)A(t)$ is replaced by its bounded extension $\Lambda(t)P(t)$ (see \cite[Section 7]{AT1} for details);
\item[(ii)] the optimal pair enjoys some weighted H\"older continuity properties;
\item[(iii)] the final datum $P_T$ belongs to the largest possible class, according to the counterexample in \cite{F1}: namely, the composition $P_T^{1/2}L_{sT}$ is a closed operator in $H$, where $L_{sT}$ is defined in Hypothesis \ref{hpmain7} below (see also \eqref{LsT}). In addition the Riccati operator $P(\cdot)$ is strongly continuous in $[s,T]$, and a necessary and sufficient condition is given in order that $P(t)\to P_T$ in ${\cal L}(H)$ as $t\to T^-$;
\item[(iv)] uniqueness holds provided $P_T\in {\cal L}(H,D([-A(T)^*]^{2\beta}))$ for some $\beta\in \,\left]\frac12-\alpha,1\right[\,$. 
\end{description}
The main purpose of this paper is the proof, under the same assumptions made in \cite{AT1} and \cite{AT2}, of an optimal uniqueness result for the Riccati equation \eqref{IRE}: we drop the assumption written in (iv) above, and the solution turns out to be unique in a very large and natural class: the same where the existence of a solution was established. In addition we prove a regularity result for the optimal state $\widehat{y}$: although in the parabolic case, as it is well known, this function may be unbounded as $t\to T^-$, nevertheless we show that $t\mapsto P(t)^{1 / 2}\widehat{y}(t)$ is continuous in the whole interval $[s,T]$, a result that seems to be new even in the autonomous case of \cite{LT1}, \cite{LT2}. The precise assumptions of the paper are listed in Section \ref{mainres} below.\\
The question of uniqueness for Riccati equations is a delicate issue. We confine ourselves to the finite horizon theory: in the autonomous setting, uniqueness was proved in the parabolic case under the assumption that $[-A(T)^*]^{2\beta} P_T$, or $[-A(T)^*]^\beta P_T[-A(T)]^\beta$ as in \cite[Part IV, Chapter 2, Section 2.3, Theorem 2.2]{BDDM}, is bounded for some $\beta>\frac 12-\alpha$, by means of a suitable {\em a priori} bound for $G(\cdot)^*A(\cdot)^*Q(\cdot)$ (in our notation), where $Q$ is the difference of two solutions; see \cite{LT1}, \cite{LT2}, \cite{F2}, \cite[Theorem 1.5.3.3]{LT3}. In the nonautonomous setting, after the pioneering paper \cite{P}, the same method is used in \cite{AFT}. A similar argument works in the autonomous hyperbolic case, see \cite [Theorems 8.3.7.1 and 9.3.5.4]{LT4} or \cite{L}. In the intermediate case of the so called ``singular estimate control systems'' most papers use the same method: see \cite{L}, \cite{LT5}, where $P_T=0$, and \cite{LTu1}, \cite{LTu2}, \cite{LTu3} where the relevant parameter in the singular estimate assumption is less than $1/2$. A similar condition is required in \cite{Tu}, where the method for proving uniqueness is based on a ``fundamental identity'' which resembles our Lemma \ref{idcontr} below. Finally we mention the case of abstract systems arising from composite PDEs systems with boundary control: the existence of a solution of the differential Riccati equation was proved in \cite{ABL1}, while the proof of its uniqueness in the case $P_T=0$, using the fundamental identity, is in \cite {AB}. \\
We shortly describe our method: on one hand, the fundamental identity of Lemma \ref{idcontr} allows to prove that for any solution $Q$ of the integral Riccati equation it must be $P \ge Q$; on the other hand, an accurate analysis of the terms appearing in the three equivalent formulations in $[s,T-\eps]$ of the integral Riccati equation (see \eqref{Riccintw} and Lemma \ref{Riccequiv} below), as well as the careful evaluation of their limits as $\eps \to 0^+$, leads us to show that the opposite inequality $P \le Q$ also holds. Our method could perhaps be useful in improving the uniqueness results in some of the papers quoted above, too.
\begin{remark} {\em This paper should have been written more than 20 years ago, signed by the first author and his dear friend and colleague Brunello Terreni. But after the death of Brunello, for several years the first author felt unable to complete the work and even to think about this topic. Now, finally, it is time to end the whole story: this article is dedicated to the memory of Brunello.}
\end{remark}
%---------------------------------------------------------------
\section{Assumptions and main results}\label{mainres}
%---------------------------------------------------------------
First of all we list our assumptions (the same as in \cite{AT1}, \cite{AT2}). Let $H$ and $U$ be complex Hilbert spaces. We denote by $\Sigma^+(H)$ the class of linear, bounded, selfadjoint, non-negative operators from $H$ into itself. 
\begin{hypothesis}\label{hpmain1} For each $t\in [0,T]$, $A(t):D(A(t))\subseteq H \to H$ is a closed linear operator generating an analytic semigroup $\{e^{rA(t)}\}_{r\ge 0}$, such that $0\in \rho(A(t))$; in particular, there exist $M>0$ and $\vartheta\in \,\left]\frac{\pi}2, \pi\right[\,$ such that
$$\|(\lambda-A(t))^{-1}\|_{{\cal L}(H)} \le M(1+|\lambda|)^{-1} \qquad \forall \lambda\in \overline{S(\vartheta)}, \quad \forall t\in [0,T],$$
where $S(\vartheta) = \{z\in \C: |\textrm{{\em arg}}\,z|<\vartheta\}$.
\end{hypothesis}
\begin{hypothesis}\label{hpmain2} There exist $N>0$ and $\rho, \mu\in \,]0,1]$ with $\delta:=\rho+\mu-1\in \,]0,\frac12[\,$, such that
$$\begin{array}{l} \left\|A(t)[\lambda-A(t)]^{-1}\big[A(t)^{-1}-A(s)^{-1}\big]\right\|_{{\cal L}(H)} \\[2mm]
\qquad \qquad + \left\|A(t)^*[\lambda-A(t)^*]^{-1}\big[[A(t)^*]^{-1}-[A(s)^*]^{-1}\big]\right\|_{{\cal L}(H)} \\[2mm]
\qquad \qquad \le N|t-s|^\mu (1+|\lambda|)^{-\rho} \qquad \forall \lambda\in \overline{S(\vartheta)}, \quad \forall t,s\in [0,T].\end{array}$$
\end{hypothesis}
By the results of \cite{AT} and \cite {A}, the family $\{A(t)\}_{t\in [0,T]}$ generates in $H$ a strongly continuous evolution operator $U(t,s)$.
\begin{hypothesis}\label{hpmain3} The family $\{U(t,s)\}_{0\le s\le t\le T}$ is the evolution operator of $\{A(t)\}_{t\in [0,T]}$ and satisfies 
$$\begin{array}{r} \left\|[-A(t)]^\eta U(t,s)[-A(s)]^{-\gamma}\right\|_{{\cal L}(H)}+\left\|[-A(s)^*]^\eta U(t,s)^*[-A(t)^*]^{-\gamma}\right\|_{{\cal L}(H)} \\[2mm]
\le M_{\eta\gamma} [1+(t-s)^{\eta-\gamma}]\qquad \textrm{for }\ 0\le s< t\le T, \quad \eta, \gamma \in [0,1].\end{array}$$
\end{hypothesis}
\begin{hypothesis}\label{hpmain4} The number $\delta =\rho+\mu-1$ is such that $0<\delta<\frac12$ and
$$\begin{array}{l} \left\|[-A(t)]^\eta U(t,s)[-A(s)]^{-\gamma}-[-A(\tau)]^\eta U(\tau,s)[-A(s)]^{-\gamma}\right\|_{{\cal L}(H)} \\[2mm]
\qquad \qquad \le N_{\eta\gamma} (t-\tau)^\delta [1+(\tau-s)^{\gamma-\eta-\delta}] \qquad \textrm{for }\ 0\le s< \tau\le t \le T, \quad \eta, \gamma \in [0,1],\\[4mm]
\left\|[-A(\sigma)^*]^\eta U(t,\sigma)^*[-A(t)^*]^{-\gamma}-[-A(s)^*]^\eta U(t,s)^*[-A(t)^*]^{-\gamma}\right\|_{{\cal L}(H)} \\[2mm]
\qquad \qquad \le N_{\eta\gamma} (\sigma-s)^\delta [1+(t-\sigma)^{\gamma-\eta-\delta}] \qquad \textrm{for }\ 0\le s\le\sigma< t \le T, \quad \eta, \gamma \in [0,1],
\end{array}$$ 
all operators being strongly continuous with respect to $t,\tau, \sigma, s$. 
\end{hypothesis} 
The properties of Hypothesis \ref{hpmain3} were proved in \cite{A} and follow by the results of \cite{AT} and \cite{AT0}. The statement of Hypothesis \ref{hpmain4} was never proved explicitly, but it follows essentially by estimates contained in \cite{A} and \cite {AT0}.
\begin{hypothesis}\label{hpmain5} $G(t)\in {\cal L}(U,H)$ for every $t\in [0,T]$, and there exists $\alpha\in \,\left]\delta,\frac12\right[\,$ such that
$$t \mapsto [-A(t)]^\alpha G(t)\in C^\delta([0,T],{\cal L}(U,H)).$$
\end{hypothesis}
The verification of the abstract Hypotheses \ref{hpmain3} and \ref{hpmain5} in several concrete initial boundary value problems for parabolic systems was made in \cite{AFT}. \\
By Hypotheses \ref{hpmain4} and \ref{hpmain5} we can give a precise meaning to the expression $U(q,s)A(s)G(s)$, $0\le s \le t\le T$, often appearing in the sequel: namely
\beq\label{UAG} 
U(t,s)A(s)G(s) = -[[-A(s)^*]^{1-\alpha}U(t,s)^*]^*[-A(s)]^\alpha G(s),
\eeq
and this operator is estimated by
\beq\label{stimaUAG}
\\|[[-A(s)^*]^{1-\alpha}U(t,s)^*]^*[-A(s)]^\alpha G(s)\|_{{\cal l}(U,H)} \le c(t-s)^{\alpha-1}.
\eeq
\begin{hypothesis}\label{hpmain6} $M(\cdot)\in C^\delta([0,T],\Sigma^+(H))$,  $N(\cdot)\in C^\delta([0,T],\Sigma^+(U))$, and there exists $\nu>0$ such that 
$$\langle N(t)u,u\rangle_U \ge \nu\|u\|_U^2 \qquad \forall t\in [0,T], \quad \forall u\in U.$$
\end{hypothesis}
This assumption is essential in the proof of the existence of a unique optimal control for the functional \eqref{Js} under the constraint \eqref{state_eq_mild}. \\
Before stating Hypothesis \ref{hpmain7}, it is useful to introduce some notations. We rewrite equation \eqref{state_eq_mild} as
\beq\label{state_Ls} 
y(t) = U(t,s)x + [L_su](t)\, \qquad t\in [s,T].
\eeq
where, of course (recalling \eqref{UAG}),
\beq\label{Ls} 
L_su(t) = -\int_s^t U(t,r)A(r)G(r)u(r)\,dr, \qquad t\in [s,T[\,, \qquad u\in L^2(s,T;U).
\eeq
The properties of the operator $L_s$ and of its adjoint $L_s^*$ are listed in \cite[Lemma 4.4]{AT1}. Wec also introduce the operator $L_{sT}$, $0\le s<T$, as
\beq\label{LsT} 
\left\{\begin{array}{l}D(L_{sT})=\displaystyle\left\{u\in L^2(s,T;U) :\ \int_s^T A(T)^{-1}U(T,r)A(r)G(r) u(r)dr\in D(A(T))\right\} \\[3mm]
\displaystyle L_{sT}u = -A(T)\int_s^T A(T)^{-1}U(T,r)A(r)G(r) u(r)dr \qquad \forall u\in D(L_{sT}).\end{array}\right.
\eeq
Clearly it holds $D(L_{sT})=D(L_{0T})$ for every $s\in \,]0,T[\,$, since  
$$\begin{array}{r} \displaystyle \int_s^T A(T)^{-1}U(T,r)A(r)G(r) u(r)dr -\int_0^T A(T)^{-1}U(T,r)A(r)G(r) u(r)dr \\[4mm]
\displaystyle = - A(T)^{-1}\int_0^s U(T,r)A(r)G(r) u(r)dr\in D(A(T)),
\end{array}$$
and for every $u\in D(L_{sT})$ 
\beq\label{Ls0T}
L_{0T}u = L_{sT}u -\int_0^s U(T,r)A(r)G(r)u(r)\,dr = L_{sT}u + U(T,s)L_0u(s).
\eeq
Let us finally state the crucial Hypothesis \ref{hpmain7}
\begin{hypothesis}\label{hpmain7} $P_T\in \Sigma^+(H)$, and the linear operator 
$$P_T^{1/2}L_{0T}:D(L_{0T})\subseteq L^2(0,T;U) \to H$$ is closed.
\end{hypothesis}
We cannot drop this assumption, due to the counterexample in \cite{F1} (see also \cite[Section 1.7]{LT3}).  
\begin{remark}\label{rem_ass} {\em As said before, the above assumptions are precisely the same as in the papers \cite{AT1} and \cite{AT2}.}\quad \qed
\end{remark}
As a consequence of Hypothesis \ref{hpmain7}, since the admissible controls are precisely the elements of $D(L_{sT})$, we can rewrite more exactly the cost functional as
\beq \label{Js_preciso}
J_s(u) \hspace{-1mm} = \hspace{-1mm}\left\{ \begin{array}{l} \displaystyle \int_s^T \|M(r)^{1 / 2}y(r)\|_H^2\,dr + \int_s^T \|N(r)^{1 / 2}u(r)\|_U^2\,dr + \|P_T^{1/2}y(T)\|_H^2 \\[2mm]
\quad\qquad\qquad\qquad\qquad\qquad\qquad \qquad\ \ \textrm{ if } u\in D(L_{sT}) \\[2mm]
+ \infty \ \ \ \quad\qquad\qquad\qquad\qquad\qquad\qquad \textrm{ if } u\in L^2(s,T;U)\setminus D(L_{sT}),\end{array}\right.
\eeq
where $P_T^{1/2}y(T)$ means $P_T^{1/2}U(T,s)x+P_T^{1/2}L_{sT}u$ for every $u\in D(L_{sT})$.\\
For further use, we also recall the definition of $L_{sT}^*$ (see  \cite[formula (2.8)]{AT1},
\beq \label{LsT*}\left\{ \begin{array}{l} 
D(L_{sT}^*) =\{y\in H: \ G(\cdot)^*A(\cdot)^*U(T,\cdot)^*y\in L^2(s,T;U)\}\\[2mm]
L_{sT}^*y = -G(\cdot)^*A(\cdot)^*U(T,\cdot)^*y; \end{array}\right.
\eeq
note that, by definition, $D(L_{tT}^*)\subseteq D(L_{sT}^*)$ for every $s\in [0,t]$, and that $L_{tT}^*y=L_{sT}^*y|_{[t,T[}$ for every $y\in D(L_{sT}$.\\ 
We now state our main results.  
\begin{theorem}\label{unic} The Riccati equation \eqref{IRE} has a unique solution $P$ within the class 
\beq \label{classeQ}
\begin{array}{l}{\cal Q}=\Big\{Q\in L^\infty(0,T;\Sigma^+(H))\cap C([0,T[\,,\Sigma^+(H)):\\[2mm] 
\qquad \qquad \qquad\displaystyle \lim_{t\to T^-} \|Q(t)x-P_Tx\|_H=0\quad \forall x\in H,\\[3mm]
\qquad \qquad \qquad\displaystyle \left\| \left[-A(t)^*\right]^{1-\alpha}Q(t)\right\|_{{\cal L}(H)} \le c(T-t)^{\alpha-1} \quad\forall t\in [0,T[\Big\}. \end{array} 
\eeq
\end{theorem}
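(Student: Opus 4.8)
The plan is to establish the two inequalities $P\ge Q$ and $P\le Q$ for any solution $Q\in\mathcal Q$, where $P$ is the classical solution constructed in \cite{AT1}, \cite{AT2}. For the inequality $P\ge Q$ I would invoke the fundamental identity of Lemma \ref{idcontr}: applied to the difference $R(\cdot)=P(\cdot)-Q(\cdot)$ it should express $\langle R(t)x,x\rangle_H$ (for $x\in H$) as a sum of manifestly non-negative quadratic terms — typically something like $\int_t^T\|N(r)^{-1/2}G(r)^*A(r)^*[\text{something}](r)x\|_U^2\,dr$ plus a boundary contribution at $T$ which vanishes because both $P$ and $Q$ have final value $P_T$ in the strong topology. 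The weighted bound $\|[-A(t)^*]^{1-\alpha}Q(t)\|_{\mathcal L(H)}\le c(T-t)^{\alpha-1}$ built into the class $\mathcal Q$ is exactly what makes all the integrals involving $A(r)G(r)$ convergent (via the estimate \eqref{stimaUAG} with exponent $\alpha-1$), so this direction should be comparatively clean.

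For the reverse inequality $P\le Q$, which is the substantive part, the idea is to work on the truncated interval $[s,T-\eps]$, where the final cost operator is the bounded, regular operator $Q(T-\eps)$ (it belongs to $\mathcal L(H,D([-A(T-\eps)^*]^{1-\alpha}))$), so the uniqueness theory with regular final datum — essentially item (iv) of the introduction, or the classical autonomous results — applies and tells us that $Q$ restricted to $[s,T-\eps]$ must coincide with the Riccati operator $P_\eps$ of the problem with horizon $T-\eps$ and final cost $Q(T-\eps)$. One then writes the three equivalent integral formulations of the Riccati equation on $[s,T-\eps]$ (the analogue of \eqref{Riccintw} and Lemma \ref{Riccequiv}) both for $P$ restricted there (with its genuine tail $\int_{T-\eps}^T$ contribution absorbed into the final datum via the flow) and for $Q$, compares them term by term, and lets $\eps\to0^+$. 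The key estimates are: the evolution-operator term $U(T-\eps,t)^*Q(T-\eps)U(T-\eps,t)$ must converge strongly to $U(T,t)^*P_TU(T,t)$ — this uses the strong continuity $Q(T-\eps)\to P_T$ together with Hypothesis \ref{hpmain4} — and the quadratic feedback integrals over $[s,T-\eps]$ must converge to those over $[s,T]$, which again hinges on the uniform weighted bound defining $\mathcal Q$ to dominate the integrand near $r=T$.

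The main obstacle I expect is precisely the passage to the limit in the feedback (quadratic) integral term $\int_t^{T-\eps}U(r,t)^*P(r)A(r)G(r)N(r)^{-1}G(r)^*A(r)^*P(r)U(r,t)\,dr$ as $\eps\to0^+$, and the matching control of the corresponding $Q$-term: one must show the tail $\int_{T-\eps}^T$ is negligible uniformly, which is delicate because $A(r)G(r)$ is unbounded and the bound on $[-A(r)^*]^{1-\alpha}P(r)$ only gives $(T-r)^{\alpha-1}$, making the integrand of order $(T-r)^{2\alpha-2}$ — non-integrable when $\alpha<\tfrac12$. The resolution must be that the product $A(r)G(r)N(r)^{-1}G(r)^*A(r)^*P(r)$, read as $[-A(r)]^\alpha G(r)\cdot N(r)^{-1}\cdot([-A(r)]^\alpha G(r))^*\cdot[-A(r)^*]^{1-2\alpha}\cdot[-A(r)^*]^{2\alpha-1+\epsilon'}P(r)$ or some such splitting, together with the analyticity estimate $\|[-A(r)^*]^{1-\alpha}U(r,t)^*\|\le c(r-t)^{\alpha-1}$, yields a genuinely integrable bound near $r=T$ when both factors of $P$ are distributed appropriately between the two copies of the evolution operator; getting the bookkeeping of fractional powers right here, and matching it against the regularity actually available for $P$ (item (i)) and for $Q\in\mathcal Q$, is where the real work lies. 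A secondary difficulty is justifying the identification of $Q|_{[s,T-\eps]}$ with the truncated Riccati operator $P_\eps$, i.e. checking that $Q$ restricted to $[s,T-\eps]$ does solve \eqref{IRE} with the new final datum — this should follow by splitting the integral in \eqref{IRE} at $T-\eps$ and recognizing the tail as $U(T,t)^*P_TU(T,t) + \int_{T-\eps}^T(\dots) = U(T-\eps,t)^*Q(T-\eps)U(T-\eps,t)$, but it requires care that the expression of $Q(T-\eps)$ so obtained is consistent.
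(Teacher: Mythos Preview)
Your plan for (a) misdescribes how the fundamental identity (Lemma \ref{idcontr}) is used: it is not applied to the difference $R=P-Q$ (the Riccati equation is nonlinear, so $R$ satisfies no useful equation of its own, and \eqref{idbase} is an identity for a \emph{single} solution together with a control/state pair). In the paper one applies \eqref{idbase} for the solution $Q$ with the pair taken to be the \emph{optimal} pair $(\widehat u,\widehat y)$ associated to $P$ (truncated to $[s,T-\eps]$ so that the final term is well defined), drops the non-negative last integral to get an inequality, and passes to the limit using $\langle P(s)x,x\rangle_H=J_s(\widehat u)$ (see \eqref{P(s)xx}). This yields $\langle Q(s)x,x\rangle_H\le J_s(\widehat u)=\langle P(s)x,x\rangle_H$.

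The real gap is in (b). Your truncated-uniqueness scheme does not produce a comparison between $P$ and $Q$: on $[s,T-\eps]$ the operator $Q$ solves the Riccati equation with final datum $Q(T-\eps)$ while $P$ solves it with final datum $P(T-\eps)$, and these are different unless you already know $P=Q$, so invoking uniqueness on the subinterval is circular. Nor can the non-integrability of $(T-r)^{2\alpha-2}$ be repaired by redistributing fractional powers; that singularity is genuine, and the paper does not overcome it by sharper analytic estimates. The paper's route is structurally different and hinges on the optimal-control interpretation: for the $Q$-closed-loop evolution $\Phi$ (Lemma \ref{closedloop}) and the associated feedback $\bar u(\cdot;s,x)=N(\cdot)^{-1}G(\cdot)^*A(\cdot)^*Q(\cdot)\Phi(\cdot,s)x$, the entire effort goes into proving that $\bar u\in D(L_{sT})$, i.e.\ that $\bar u$ is an \emph{admissible} control. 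This is done by constructing the weak limit $C(s)x=\wlim_{\eps\to0^+}\Pi\,Q(T-\eps)^{1/2}\Phi(T-\eps,s)x$ (Lemma \ref{C(s)}), identifying $B(s)=C(s)^*C(s)$, and---crucially---exploiting the closedness of $P_T^{1/2}L_{sT}$ (Hypothesis \ref{hpmain7}) together with the boundedness of $[L_{sT}^*P_T^{1/2}]^*$ to conclude that $P_T^{1/2}L_{sT}u_\eps$ converges strongly and hence $\bar u\in D(L_{sT})$ with $C(s)x=P_T^{1/2}U(T,s)x+P_T^{1/2}L_{sT}\bar u$ (Proposition \ref{u_in_D(LsT)}). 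Once $\bar u$ is admissible, equation \eqref{Riccint3} reads $\langle Q(s)x,x\rangle_H=J_s(\bar u)\ge J_s(\widehat u)=\langle P(s)x,x\rangle_H$, which is (b). Your outline never invokes Hypothesis \ref{hpmain7}; without it, the inequality $P\le Q$ is inaccessible under the stated assumptions.
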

The proof is postponed in Section \ref{proof_unic}.
\begin{theorem}\label{reg_Py} The optimal state $\widehat{y}$ of problem \eqref{Js_preciso}-\eqref{state_Ls} satisfies 
$$t\mapsto P(t)^{1/2}\widehat{y}(t) \in C([0,T],H).$$
\end{theorem}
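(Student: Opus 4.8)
\textbf{Proof plan for Theorem \ref{reg_Py}.}

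The plan is to exploit the optimality synthesis together with Theorem \ref{unic} and the ``fundamental identity'' of Lemma \ref{idcontr}. First I would recall that, since $P$ is the (unique) solution of \eqref{IRE}, the optimal control admits the feedback representation $\widehat{u}(t)=-N(t)^{-1}G(t)^*A(t)^*P(t)\widehat{y}(t)$ for $t\in[s,T[$, so that $\widehat{y}$ solves the closed-loop mild equation $\widehat{y}(t)=U(t,s)x+L_s\widehat{u}(t)$; the bound $\|[-A(t)^*]^{1-\alpha}P(t)\|_{{\cal L}(H)}\le c(T-t)^{\alpha-1}$ from \eqref{classeQ}, combined with \eqref{stimaUAG} and Hypothesis \ref{hpmain6}, already shows $\widehat{u}\in L^2(s,T;U)$ and that $\widehat{y}$ is continuous on $[s,T[$ with a possible blow-up only at $T$. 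The point of the theorem is that the weighted quantity $P(t)^{1/2}\widehat{y}(t)$ has no blow-up: intuitively $P(t)^{1/2}$ ``kills'' exactly the bad directions of $\widehat{y}(t)$.

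The key identity I would use is the value-function identity: for $s\le t<T$,
\beq\label{valfun}
\langle P(t)\widehat{y}(t),\widehat{y}(t)\rangle_H = \int_t^T\|M(r)^{1/2}\widehat{y}(r)\|_H^2\,dr+\int_t^T\|N(r)^{1/2}\widehat{u}(r)\|_U^2\,dr+\|P_T^{1/2}\widehat{y}(T)\|_H^2 ,
\eeq
i.e. $\|P(t)^{1/2}\widehat{y}(t)\|_H^2 = J_t(\widehat u|_{[t,T]})$, which follows from the dynamic programming principle (here the right-hand side makes sense because $\widehat u|_{[t,T]}\in D(L_{tT})$ by Hypothesis \ref{hpmain7} and the definition \eqref{Js_preciso}, and $\widehat y(T)=U(T,t)\widehat y(t)+L_{tT}\widehat u$). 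From \eqref{valfun} one reads off immediately that $t\mapsto\|P(t)^{1/2}\widehat y(t)\|_H^2$ is \emph{continuous} on $[s,T]$: on $[s,T[$ the integrals are absolutely continuous in $t$ and $\widehat y(\cdot)$ is continuous into $H$, while as $t\to T^-$ the two integrals tend to $0$ and the boundary term is constant, so the limit is $\|P_T^{1/2}\widehat y(T)\|_H^2$; and at $t=T$ we \emph{define} $P(T)^{1/2}\widehat y(T):=P_T^{1/2}\widehat y(T)$, which is legitimate since $P_T^{1/2}L_{sT}$ is closed (Hypothesis \ref{hpmain7}) so $P_T^{1/2}\widehat y(T)$ is a well-defined element of $H$.

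Continuity of the norm is not quite continuity of the vector, so the second step is to upgrade to norm-plus-weak convergence. On $[s,T[$ there is nothing to prove: $P(\cdot)$ is strongly continuous into ${\cal L}(H)$ and $\widehat y(\cdot)\in C([s,T[,H)$, and $P(t)^{1/2}$ is strongly continuous because $t\mapsto P(t)$ is; hence $t\mapsto P(t)^{1/2}\widehat y(t)$ is continuous on $[s,T[$. The only issue is the limit $t\to T^-$. Here I would show $P(t)^{1/2}\widehat y(t)\rightharpoonup P_T^{1/2}\widehat y(T)$ weakly in $H$: for fixed $z\in H$, $\langle P(t)^{1/2}\widehat y(t),z\rangle_H=\langle \widehat y(t),P(t)^{1/2}z\rangle_H$; writing $\widehat y(t)=U(t,s)x+L_s\widehat u(t)$ and using \eqref{Ls0T}-type identities together with $\|[-A(t)^*]^{1-\alpha}P(t)^{1/2}z\|\le\|[-A(t)^*]^{1-\alpha}P(t)\|^{1/2}\cdot\|P(t)\|^{1/2}$-style bounds (or more directly the closedness of $P_T^{1/2}L_{0T}$ applied to $\widehat u$), one identifies the limit of $\langle \widehat y(t),P(t)^{1/2}z\rangle_H$ as $\langle \widehat y(T),P_T^{1/2}z\rangle_H=\langle P_T^{1/2}\widehat y(T),z\rangle_H$. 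Combined with the norm convergence $\|P(t)^{1/2}\widehat y(t)\|_H\to\|P_T^{1/2}\widehat y(T)\|_H$ from Step 1, weak convergence plus convergence of norms in the Hilbert space $H$ forces strong convergence $P(t)^{1/2}\widehat y(t)\to P_T^{1/2}\widehat y(T)$ in $H$, which is exactly the asserted continuity at $T$.

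The main obstacle I anticipate is rigorously justifying identity \eqref{valfun} up to and including the boundary term, and the weak-limit identification at $t=T$: one must be careful that $\widehat y(T)$ is only defined through the closed operator $P_T^{1/2}L_{sT}$ (the state itself may blow up), so every manipulation at the endpoint has to be phrased in terms of $P_T^{1/2}L_{sT}$ and $P_T^{1/2}U(T,\cdot)$ rather than $\widehat y(T)$ as a genuine element of $H$. The relevant closedness and the representation of $L_{sT}^*$ in \eqref{LsT*}, together with the weighted continuity estimates of Hypotheses \ref{hpmain3}--\ref{hpmain5}, should make this controllable; the rest is the standard dynamic-programming computation and the Hilbert-space fact that weak convergence with convergent norms implies strong convergence.
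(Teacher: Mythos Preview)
Your overall architecture---norm convergence of $t\mapsto\|P(t)^{1/2}\widehat y(t)\|_H$ together with weak convergence at $T$, then the Hilbert-space upgrade to strong convergence---is exactly the mechanism behind Corollary~\ref{B=C*C}, and indeed the paper's proof of Theorem~\ref{reg_Py} is the one-line remark that Corollary~\ref{B=C*C} with $Q=P$ gives $P(T-\eps)^{1/2}\widehat\Phi(T-\eps,s)x\to C(s)x=P_T^{1/2}\widehat y(T)$ strongly. Your Step~1 is correct and amounts to the second equality of Proposition~\ref{PleQ1}; the value-function identity you write is just the dynamic-programming restatement of \eqref{P(s)xx}.

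The gap is in Step~2, the weak convergence $P(t)^{1/2}\widehat y(t)\rightharpoonup P_T^{1/2}\widehat y(T)$. Your route via $\langle\widehat y(t),P(t)^{1/2}z\rangle_H$ runs straight into the central difficulty of the paper: $\widehat y(t)$ may be \emph{unbounded} as $t\to T^-$ (the introduction stresses this), while $P(t)^{1/2}z\to P_T^{1/2}z$ strongly, so the pairing is a genuine indeterminate form. The bound you propose, of the shape $\|[-A(t)^*]^{1-\alpha}P(t)^{1/2}z\|\le\|[-A(t)^*]^{1-\alpha}P(t)\|^{1/2}\|P(t)\|^{1/2}$, is not a valid operator inequality (no Heinz-type estimate relates $[-A^*]^{1-\alpha}P^{1/2}$ to powers of $[-A^*]^{1-\alpha}P$ when $A^*$ and $P$ fail to commute), and would in any case only deliver a $(T-t)^{(\alpha-1)/2}$ blow-up, not convergence. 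Invoking closedness of $P_T^{1/2}L_{sT}$ by itself does not help either: closedness lets you pass limits once you already have convergence of both $u_\eps$ and $P_T^{1/2}L_{sT}u_\eps$, but here the second convergence is precisely what is in question. The paper devotes essentially all of Section~\ref{PleQ} to this weak-limit identification: Lemma~\ref{C(s)} constructs the weak limit $C(s)x$ by testing against $\overline{R(P_T^{1/2})}$ through the alternative Riccati form \eqref{Riccint1}, and the chain Lemma~\ref{legameBC}--Proposition~\ref{u_in_D(LsT)} then identifies $C(s)x$ with $P_T^{1/2}[U(T,s)x+L_{sT}\bar u]$. Applied with $Q=P$ (so $\bar u=\widehat u$) this is exactly your missing Step~2. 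In short, you correctly flagged the ``main obstacle'', but neither shortcut you offer resolves it; the weak-limit identification genuinely requires the $C(s)$ machinery, or something of comparable depth.
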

This theorem will follow as a simple consequence of Theorem \ref{unic}, at the end of Section \ref{proof_unic}. %---------------------------------------------------------------
\section{Auxiliary results} 
%---------------------------------------------------------------
Our proof needs some auxiliary facts. In the next statements we consider a fixed operator $Q$, belonging to the class ${\cal Q}$ introduced in \eqref{classeQ}, which satisfies the differential Riccati equation \eqref{DRE}. As we do not know yet whether the singularity of the quadratic term is integrable at $T$ or not, we rewrite \eqref{DRE} as an integral equation in the smaller interval $[s,T-\eps]$, where $0\le s <T-\eps<T$. We write it in its weak form, valid for all $x,y\in H$:
\beq \label{Riccintw}
\begin{array}{lll}
\langle Q(s)x,y\rangle_H = \langle Q(T-\eps)U(T-\eps,s)x,U(T-\eps,s)y\rangle_H  \\[2mm]
\displaystyle \qquad \qquad \quad \ +\int_s^{T-\eps} \langle M(r)U(r,s)x,U(r,s)y\rangle_H\,dr  \\[4mm]
\displaystyle \qquad \qquad \quad \ -\int_s^{T-\eps} \langle N(r)^{-1}G(r)^*A(r)^*Q(r)U(r,s)x,G(r)^*A(r)^*Q(r)U(r,s)y\rangle_U \,dr,
\end{array}
\eeq 
We recall explicitly that in \eqref{Riccintw} the operators $Q(\cdot)$, $U(\cdot,\cdot)$, $M(\cdot)$, $N(\cdot)^{-1}$ are uniformly bounded in $[0,T]$ by some constant $K$, while the operator $G(\cdot)^*A(\cdot)^*Q(\cdot)$, taking into account \eqref{UAG} and \eqref{stimaUAG}, is well defined and uniformly bounded in $[s,T-\eps]$ by some constant $C_\eps\,$, since its singularity is concentrated at $T$. \\
We oserve now that we may let $\eps\to 0^+$ in equation \eqref{Riccintw}: indeed all terms converge, but (possibly) the last. Hence, by difference, the last one converges, too, and when $x=y$ we deduce 
$$N(\cdot)^{-1 / 2}G(\cdot)^*A(\cdot)^*Q(\cdot)U(\cdot,s)x \in L^2(s,T;U).$$
Thus, we obtain from \eqref{Riccintw}, for all $x,y\in H$, as $\eps\to 0^+$:
\beq \label{Riccintw2}
\begin{array}{lll}
\langle \displaystyle Q(s)x,y\rangle_H = \langle P_TU(T,s)x,U(T,s)y\rangle_H + \int_s^{T} \langle M(r)U(r,s)x,U(r,s)y\rangle_H\,dr  \\[4mm]
\displaystyle \qquad \qquad \quad \ -\int_s^{T} \langle N(r)^{-1}G(r)^*A(r)^*Q(r)U(r,s)x,G(r)^*A(r)^*Q(r)U(r,s)y\rangle_U \,dr.
\end{array}
\eeq 
Our first lemma concerns a basic identity which is well known in classical control theory.
\begin{lemma}[Fundamental identity]\label{idcontr} Let ${\cal Q}$ be given by (\ref{classeQ}), and let $Q\in {\cal Q}$ be a solution of \eqref{Riccintw}. In addition, fix $s\in [0,T[\,$, $x\in H$, a control $u\in L^2(s,T;U)$ and the corresponding state $y\in L^2(s,T;H)$, given by \eqref{state_Ls}. Then the following identity holds for every $\eps \in \,]0,T-s]$:
\beq \label{idbase} 
\begin{array}{l} \langle Q(T-\eps)y(T-\eps),y(T-\eps)\rangle_H - \langle Q(s)x,x\rangle_H  \\[2mm]
\quad \qquad \qquad \quad \ \displaystyle =-\int_s^{T-\eps} \|M(r)^{1 / 2}y(r)\|_H^2\,dr -\int_s^{T-\eps} \|N(r)^{1 / 2}u(r)\|_U^2\,dr \\[4mm]
\quad \qquad \qquad \quad \ \displaystyle +\int_s^{T-\eps} \|N(r)^{1 / 2}u(r)-N(r)^{-1 / 2}G(r)^*A(r)^*Q(r)y(r)\|_U^2 \,dr.\end{array}
\eeq 
\end{lemma}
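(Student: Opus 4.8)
The plan is to recover \eqref{idbase} by integrating over $[s,T-\eps]$ the pointwise ``completion of squares'' identity
$$\frac{d}{dt}\langle Q(t)y(t),y(t)\rangle_H = -\|M(t)^{1/2}y(t)\|_H^2 - \|N(t)^{1/2}u(t)\|_U^2 + \|N(t)^{1/2}u(t)-N(t)^{-1/2}G(t)^*A(t)^*Q(t)y(t)\|_U^2 ,$$
expected to hold a.e.\ on $[s,T-\eps]$. Working on $[s,T-\eps]$ is what makes everything safe: there all operators involved, in particular $G(\cdot)^*A(\cdot)^*Q(\cdot)=-\big([-A(\cdot)]^\alpha G(\cdot)\big)^*[-A(\cdot)^*]^{1-\alpha}Q(\cdot)$ — whose only singularity sits at $T$ — are bounded by $C_\eps$, so all integrals below are finite. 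Since $y(s)=x$, it then suffices to prove that $g(t):=\langle Q(t)y(t),y(t)\rangle_H$ is absolutely continuous on $[s,T-\eps]$ with the derivative written above.

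To this end I would fix $s\le t<t+h\le T-\eps$ and split the state by the mild equation \eqref{state_Ls}--\eqref{Ls} on $[t,t+h]$: $y(t+h)=U(t+h,t)y(t)+z_h$, $z_h:=-\int_t^{t+h}U(t+h,r)A(r)G(r)u(r)\,dr$. Expanding $g(t+h)$,
$$g(t+h)-g(t)=\Big[\langle Q(t+h)U(t+h,t)y(t),U(t+h,t)y(t)\rangle_H-g(t)\Big]+2\,\mathrm{Re}\,\langle Q(t+h)U(t+h,t)y(t),z_h\rangle_H+\langle Q(t+h)z_h,z_h\rangle_H .$$
The first bracket is given \emph{exactly} by \eqref{Riccintw} with initial time $t$, final time $t+h$ and $x=y=y(t)$, namely $-\int_t^{t+h}\langle M(r)U(r,t)y(t),U(r,t)y(t)\rangle_H\,dr+\int_t^{t+h}\|N(r)^{-1/2}G(r)^*A(r)^*Q(r)U(r,t)y(t)\|_U^2\,dr$. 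In the last two terms I would insert $z_h$, move $U(t+h,r)^*$ onto the left factor and use \eqref{UAG}: since, by Hypotheses \ref{hpmain5} and \ref{hpmain3}, $G(r)^*A(r)^*U(t+h,r)^*=-\big([-A(r)]^\alpha G(r)\big)^*[-A(r)^*]^{1-\alpha}U(t+h,r)^*$ is bounded uniformly for $t\le r\le t+h$, those terms equal $-\mathrm{Re}\int_t^{t+h}\langle\Psi_h(r),u(r)\rangle_U\,dr$ with $\Psi_h(r):=\big(G(r)^*A(r)^*U(t+h,r)^*\big)Q(t+h)\big(y(t+h)+U(t+h,t)y(t)\big)$. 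Estimating every integrand, uniformly in $t,h$, by a fixed function $C+C'\|u(\cdot)\|_U\in L^1(s,T-\eps)$ then gives $|g(t+h)-g(t)|\le\int_t^{t+h}(C+C'\|u(r)\|_U)\,dr$, hence the absolute continuity of $g$ on $[s,T-\eps]$.

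Next I would divide by $h$ and let $h\to 0^+$ at a common Lebesgue point $t$ of $u$ and of the (countably many) scalar integrands involved: the first bracket tends to $-\langle M(t)y(t),y(t)\rangle_H+\|N(t)^{-1/2}G(t)^*A(t)^*Q(t)y(t)\|_U^2$ (using $U(t,t)=\mathrm{Id}$ and continuity of $M$), while $Q(t+h)\big(y(t+h)+U(t+h,t)y(t)\big)\to 2Q(t)y(t)$ in $H$, with $2Q(t)y(t)\in D([-A(t)^*]^{1-\alpha})$, yields $\sup_{r\in[t,t+h]}\|\Psi_h(r)-2G(t)^*A(t)^*Q(t)y(t)\|_U\to 0$ and hence $\frac1h\int_t^{t+h}\langle\Psi_h(r),u(r)\rangle_U\,dr\to 2\langle G(t)^*A(t)^*Q(t)y(t),u(t)\rangle_U$. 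Thus $g'(t)=-\langle M(t)y(t),y(t)\rangle_H+\|N(t)^{-1/2}G(t)^*A(t)^*Q(t)y(t)\|_U^2-2\,\mathrm{Re}\,\langle G(t)^*A(t)^*Q(t)y(t),u(t)\rangle_U$, and the elementary identity $\|N^{-1/2}w\|_U^2-2\,\mathrm{Re}\,\langle w,u\rangle_U=-\|N^{1/2}u\|_U^2+\|N^{1/2}u-N^{-1/2}w\|_U^2$ (with $N=N(t)$, $w=G(t)^*A(t)^*Q(t)y(t)$) identifies this with the integrand of \eqref{idbase}. Integrating over $[s,T-\eps]$ and using $g(s)=\langle Q(s)x,x\rangle_H$ concludes.

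I expect the main obstacle to be the limit $\frac1h\int_t^{t+h}\langle\Psi_h(r),u(r)\rangle_U\,dr\to 2\langle G(t)^*A(t)^*Q(t)y(t),u(t)\rangle_U$, i.e.\ the control of $[-A(r)^*]^{1-\alpha}U(t+h,r)^*$ acting on the convergent vectors $Q(t+h)\big(y(t+h)+U(t+h,t)y(t)\big)$ as $r,t+h\to t$: this is precisely where the near-diagonal estimates for $U$ and $U^*$ in Hypotheses \ref{hpmain3}--\ref{hpmain4} are needed, together with the property, built into the class ${\cal Q}$, that $Q(\cdot)$ maps $H$ into $D([-A(\cdot)^*]^{1-\alpha})$ with $\|[-A(\cdot)^*]^{1-\alpha}Q(\cdot)\|_{{\cal L}(H)}\le c(T-\cdot)^{\alpha-1}$. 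An alternative would be to prove \eqref{idbase} first for $u$ in a dense subset of $L^2(s,T;U)$ yielding a regular enough state that the formal identity $\frac{d}{dt}\langle Q(t)y(t),y(t)\rangle=\langle Q'(t)y(t),y(t)\rangle+2\,\mathrm{Re}\,\langle Q(t)y'(t),y(t)\rangle$ is literally valid, and then to extend to all $u\in L^2(s,T;U)$ by continuity in $(x,u)$ of both sides of \eqref{idbase}, using the mapping properties of $L_s$ from \cite[Lemma 4.4]{AT1} and the $C_\eps$-bound above. In either approach the completion of squares is trivial and the dangerous unbounded terms $\langle Q(t)A(t)y(t),y(t)\rangle$ of the formal computation never surface, because the mild form of the state equation is kept throughout.
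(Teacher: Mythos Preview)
Your approach is genuinely different from the paper's. In Appendix~\ref{A} the paper never differentiates: it expands the right-hand side of \eqref{idbase} by inserting $y=U(\cdot,s)x+L_su$, splits the result into the eight pieces $M_1,M_2,M_3,N_2,\dots,N_6$, and then uses the Riccati equation \eqref{Riccintw} three separate times --- once on each of the groups $M_1+N_4$, $M_2+N_5+N_2$, $M_3+N_3+N_6$, the last via an auxiliary symmetrization lemma for $L_s^{*\eps}\Psi L_s$ --- to collapse the whole sum into $\langle Q(T-\eps)y(T-\eps),y(T-\eps)\rangle_H-\langle Q(s)x,x\rangle_H$. No derivatives, no Lebesgue points, and no continuity of $r\mapsto[-A(r)^*]^{1-\alpha}Q(r)$ are needed; the price is a long but entirely elementary calculation. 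Your route instead localizes \eqref{Riccintw} to $[t,t+h]$ and differentiates $g(t)=\langle Q(t)y(t),y(t)\rangle_H$: this is conceptually the cleaner dynamic-programming argument and is shorter once the limits are justified.

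The one point where your proof is not yet complete is exactly the obstacle you flag. Both the first-bracket limit and the claim $\sup_{r\in[t,t+h]}\|\Psi_h(r)-2G(t)^*A(t)^*Q(t)y(t)\|_U\to 0$ require strong continuity of $r\mapsto[-A(r)^*]^{1-\alpha}Q(r)$ on $[s,T-\eps]$, and membership in ${\cal Q}$ only provides the bound $c(T-r)^{\alpha-1}$, not continuity. The uniform bound together with Hypotheses~\ref{hpmain3}--\ref{hpmain4} lets you write $[-A(r)^*]^{1-\alpha}U(t+h,r)^*[-A(t+h)^*]^{-(1-\alpha)}\to I$ strongly, but you still need $[-A(t+h)^*]^{1-\alpha}Q(t+h)w_h\to 2[-A(t)^*]^{1-\alpha}Q(t)y(t)$ in $H$, and that does \emph{not} follow from the class ${\cal Q}$ alone. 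You can extract it by applying $[-A(t)^*]^{1-\alpha}$ to the strong form of \eqref{Riccintw} itself and invoking the strong-continuity clauses in Hypothesis~\ref{hpmain4}; alternatively, your density route works cleanly and bypasses the issue entirely (both sides of \eqref{idbase} are continuous in $u\in L^2(s,T;U)$ once $\eps>0$ is fixed). Either patch closes the gap, but be aware that one more appeal to the Riccati equation is needed here --- the hypotheses on $U$, $U^*$ and the pointwise bound in ${\cal Q}$ are not sufficient by themselves.
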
 
The proof is in Appendix \ref{A}.\\[2mm]
Our second lemma concerns the so called ``closed loop equation''.
\begin{lemma}\label{closedloop} Let $Q\in {\cal Q}$, with ${\cal Q}$ given by \eqref{classeQ}. For fixed $s\in [0,T[\,$, the closed loop equation 
\beq \label{eqclosedloop}
\Phi(t,s)=U(t,s)-\int_s^t U(t,r)A(r)G(r)N(r)^{-1}G(r)^*A(r)^*Q(r)\Phi(r,s)\,dr, \quad t\in [s,T[\,,
\eeq
has a unique solution $\Phi(\cdot,s)$, belonging to $C([s,T-\eps],{\cal L}(H))$ for every $\eps \in \,]0,T-s[\,$.\end{lemma}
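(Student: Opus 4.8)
The plan is to treat \eqref{eqclosedloop} as a linear Volterra integral equation of the second kind in the Banach space $C([s,T-\eps],{\cal L}(H))$ for each fixed $\eps\in\,]0,T-s[\,$, and solve it by the method of successive approximations (Picard iteration), exploiting the fact that the kernel has only an integrable singularity on $[s,T-\eps]$. Indeed, by \eqref{UAG}–\eqref{stimaUAG} the operator $U(t,r)A(r)G(r)$ is bounded by $c(t-r)^{\alpha-1}$ with $\alpha-1\in\,]-1,0[\,$, the operators $N(r)^{-1}$ and $G(r)^*A(r)^*Q(r)$ are uniformly bounded on $[s,T-\eps]$ by a constant $C_\eps$ (exactly as noted after \eqref{Riccintw}, since the singularity of $G(\cdot)^*A(\cdot)^*Q(\cdot)$ sits at $T$, outside $[s,T-\eps]$), and $U(t,s)$ is uniformly bounded. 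So the integral operator
$$
(\cT_s\Psi)(t) := -\int_s^t U(t,r)A(r)G(r)N(r)^{-1}G(r)^*A(r)^*Q(r)\Psi(r)\,dr
$$
maps $C([s,T-\eps],{\cal L}(H))$ into itself, with $\|\cT_s\Psi(t)-\cT_s\Psi(t')\|$ controlled via Hypothesis \ref{hpmain4} (strong continuity and Hölder bounds for $U$) plus the integrability of $(t-r)^{\alpha-1}$; continuity in $t$ of the first iterate $U(\cdot,s)$ follows from Hypothesis \ref{hpmain3}.

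First I would establish that $\cT_s$ is a bounded linear operator on $C([s,T-\eps],{\cal L}(H))$ with the crucial ``Volterra smalling'' estimate for its iterates: using $\|(\cT_s\Psi)(t)\|\le C\int_s^t (t-r)^{\alpha-1}\|\Psi(r)\|\,dr$, an induction with the Beta-function identity $\int_s^t (t-r)^{\alpha-1}(r-s)^{k\alpha-1}\,dr = (t-s)^{(k+1)\alpha-1}B(\alpha,k\alpha)$ gives
$$
\|(\cT_s^{\,k}\Psi)(t)\|_{{\cal L}(H)} \le C^k\,\frac{\Gamma(\alpha)^k}{\Gamma(k\alpha)}(t-s)^{k\alpha-1}\|\Psi\|_\infty\quad\text{(for }k\ge1\text{)},
$$
so that $\|\cT_s^{\,k}\|_{{\cal L}(C)}\to 0$ as $k\to\infty$, and in particular $\sum_k\cT_s^{\,k}$ converges in operator norm. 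Hence $I-\cT_s$ is invertible, and $\Phi(\cdot,s):=(I-\cT_s)^{-1}U(\cdot,s)$ is the unique element of $C([s,T-\eps],{\cal L}(H))$ solving $\Phi=U(\cdot,s)+\cT_s\Phi$, which is exactly \eqref{eqclosedloop}.

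Next I would check consistency across different $\eps$: a solution on $[s,T-\eps']$ restricts to a solution on $[s,T-\eps]$ when $\eps'<\eps$, by uniqueness on the smaller interval; this lets us speak of a single $\Phi(\cdot,s)\in C([s,T[\,,{\cal L}(H))$ and confirms the statement as phrased. I would also record the a priori bound $\|\Phi(t,s)\|_{{\cal L}(H)}\le c_\eps$ on $[s,T-\eps]$ coming from the Neumann series, which will be used downstream. Continuity in $t$ of $\cT_s\Phi$, hence of $\Phi(\cdot,s)$, deserves a word: one splits $\int_s^t-\int_s^{t'}$ into the ``tail'' $\int_{t'}^t$, bounded by $c\int_{t'}^t(t-r)^{\alpha-1}dr=c\,|t-t'|^\alpha/\alpha\to0$, and the ``main'' part $\int_s^{t'}[U(t,r)-U(t',r)]A(r)G(r)(\cdots)\Phi(r)\,dr$, handled by the second difference estimate in Hypothesis \ref{hpmain4} together with dominated convergence.

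The main obstacle, such as it is, is purely bookkeeping around the singular kernel: one must be careful that every manipulation of $U(t,r)A(r)G(r)$ goes through the rigorous definition \eqref{UAG} as $-[[-A(r)^*]^{1-\alpha}U(t,r)^*]^*[-A(r)]^\alpha G(r)$ rather than the formal product, and that the Hölder-type difference bounds used to get $t$-continuity of the iterates are applied with admissible exponents $\eta=1-\alpha$, $\gamma=0$ (so that $\gamma-\eta-\delta=\alpha-1-\delta>-1$, keeping the relevant integral finite since $\delta<\alpha$). No genuinely new estimate is needed beyond Hypotheses \ref{hpmain1}–\ref{hpmain6}; the Volterra/Beta-function contraction argument is standard once the kernel bound $c(t-r)^{\alpha-1}$ and the uniform bound $C_\eps$ on $G(\cdot)^*A(\cdot)^*Q(\cdot)$ over $[s,T-\eps]$ are in hand.
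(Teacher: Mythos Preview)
Your proposal is correct and follows essentially the same route as the paper: both introduce the Volterra integral operator with kernel $U(t,r)A(r)G(r)N(r)^{-1}G(r)^*A(r)^*Q(r)$, bound it by $c_\eps(t-r)^{\alpha-1}$ on $[s,T-\eps]$, and invert $I-\cT_s$ (the paper writes $1+K_s$) on $C([s,T-\eps],{\cal L}(H))$. The only difference is that the paper delegates the invertibility of $1+K_s$ to a citation (\cite{AT2}), whereas you spell out the standard Neumann-series argument with the Beta-function iterate bounds.
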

\begin{proof} Fix $\eps\in \,]0,T-s[\,$. We introduce the integral operator
$$[K_s g](t):= \int_s^t U(t,r)A(r)G(r)N(r)^{-1}G(r)^*A(r)^*Q(r)g(r)\,dr, \ \ t\in [s,T-\eps],\ \ g\in L^2(s,T;U),$$
whose kernel $K(t,r)$, given by
$$K(t,r):= U(t,r)A(r)G(r)N(r)^{-1}G(r)^*A(r)^*Q(r), \quad 0\le r< t< T,$$
is continuous in the region $\{(t,r): 0\le r<t<T\}$ with values in ${\cal L}(H)$ and satisfies the estimate (see \cite[formula (6.6)]{AT1})
$$\|K(t,r)\|_{{\cal L}(H)}\le c(t-r)^{\alpha-1}(T-r)^{\alpha-1}, \qquad 0 \le r < t < T,$$
so that, in particular,
\beq \label{stimanucleo}
\|K(t,r)\|_{{\cal L}(H)} \le c_\eps (t-r)^{\alpha-1}, \quad 0\le r < t \le T-\eps.
\eeq
It is shown in \cite{AT2}, among other things, that $(1 + K_s)^{-1}$ is well defined and belongs to the space ${\cal L}(C([s,T-\eps],{\cal L}(H)))$. Thus if we set $\Phi(t,s)=[(1 + K_s)^{-1}(U(\cdot,s)](t)$, we immediately obtain that \eqref{eqclosedloop} holds in $[s,T-\eps]$, with arbitrary $\eps\in \,]0,T-s[\,$, i.e. in $[s,T[\,$. 
\end{proof}
We remark that, by uniqueness of the solution of \eqref{state_eq_strong}, 
\beq \label{transit}
\Phi(t,s)=\Phi(t,q)\Phi(q,s),  \qquad 0\le s \le q \le t <T.
\eeq
The next lemma is basic: it expresses the integral Riccati equation \eqref{IRE} in two equivalent forms, from which we will deduce all the relevant informations for our proof.
\begin{lemma} \label{Riccequiv} Let $\Phi(t,s)$ be defined by Lemma \ref{closedloop}. The Riccati integral equation \eqref{Riccintw} is equivalent to both equations below, which hold for all $x,y\in H$ and $0\le s\le T-\eps<T$:
\beq \label{Riccint1}
\begin{array}{l}
\langle Q(s)x,y\rangle_H  \\
\quad =\displaystyle\langle Q(T-\eps)\Phi(T-\eps,s)x,U(T-\eps,s)y\rangle_H + 
\int_s^{T-\eps} \langle M(r)\Phi(r,s)x,U(r,s)y\rangle_H\,dr; \end{array}
\eeq
\beq \label{Riccint2}
\begin{array}{l}
\hspace{-4mm}\langle Q(s)x,y\rangle_H = \\
\hspace{-4mm}\ \ =\displaystyle\langle Q(T-\eps)\Phi(T-\eps,s)x,\Phi(T-\eps,s)y\rangle_H +\int_s^{T-\eps} \langle M(r)\Phi(r,s)x,\Phi(r,s)y\rangle_H\,dr + \\[2mm]
\hspace{-4mm} \ \ \displaystyle \ \ \quad \qquad+\int_s^{T-\eps} \langle N(r)^{-1}G(r)^*A(r)^*Q(r)\Phi(r,s)x,G(r)^*A(r)^*Q(r)\Phi(r,s)y\rangle_U \,dr.
\end{array}
\eeq
\end{lemma}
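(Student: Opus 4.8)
The plan is to read \eqref{Riccint1} and \eqref{Riccint2} as the rigorous integral incarnations of the formal computation
\[
\langle Q(s)x,y\rangle-\langle Q(T-\eps)\Phi(T-\eps,s)x,U(T-\eps,s)y\rangle=-\int_s^{T-\eps}\frac{d}{dr}\langle Q(r)\Phi(r,s)x,U(r,s)y\rangle\,dr
\]
(and the analogous identity with $U(\cdot,s)y$ replaced by $\Phi(\cdot,s)y$ in the second slot), in which one uses the differential Riccati equation \eqref{DRE} together with the closed-loop differential equation $\frac{d}{dt}\Phi(t,s)=[A(t)-A(t)G(t)N(t)^{-1}G(t)^*A(t)^*Q(t)]\Phi(t,s)$ to simplify the integrand; the feedback term is tuned exactly so that the quadratic part of \eqref{DRE} is absorbed, leaving $-\langle M(r)\Phi(r,s)x,U(r,s)y\rangle$ in the first case and $-\big(\langle M(r)\Phi(r,s)x,\Phi(r,s)y\rangle_H+\langle N(r)^{-1}G(r)^*A(r)^*Q(r)\Phi(r,s)x,G(r)^*A(r)^*Q(r)\Phi(r,s)y\rangle_U\big)$ in the second. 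Since inside the class $\mathcal Q$ we do not know that $Q$ is differentiable, the point is to carry this out at the level of the integral equations \eqref{Riccintw} and \eqref{eqclosedloop}.

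For the two implications \eqref{Riccintw}$\Rightarrow$\eqref{Riccint1} and \eqref{Riccintw}$\Rightarrow$\eqref{Riccint2} I would avoid the direct substitution and instead invoke the fundamental identity of Lemma \ref{idcontr}. Fix $s$ and $x$, and put $u_x(r):=N(r)^{-1}G(r)^*A(r)^*Q(r)\Phi(r,s)x$; this function belongs to $C([s,T-\eps],U)$ for every $\eps$, since $\Phi(\cdot,s)\in C([s,T-\eps],\mathcal L(H))$ by Lemma \ref{closedloop} and $\|G(\cdot)^*A(\cdot)^*Q(\cdot)\|_{\mathcal L(H,U)}$ is bounded on $[s,T-\eps]$ by \eqref{UAG}, \eqref{stimaUAG} and the last condition in \eqref{classeQ}, and by \eqref{eqclosedloop} the state it produces via \eqref{state_Ls} is exactly $\Phi(\cdot,s)x$. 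For this control the pointwise identity $N(r)^{1/2}u_x(r)-N(r)^{-1/2}G(r)^*A(r)^*Q(r)\Phi(r,s)x=0$ holds, so the last (non-negative) integral in \eqref{idbase} vanishes; after rearranging and using $\|N(r)^{1/2}u_x(r)\|_U^2=\langle N(r)^{-1}G(r)^*A(r)^*Q(r)\Phi(r,s)x,G(r)^*A(r)^*Q(r)\Phi(r,s)x\rangle_U$, what is left is precisely \eqref{Riccint2} with $x=y$; since both sides of \eqref{Riccint2} are Hermitian sesquilinear forms in $(x,y)$, polarization gives it for all $x,y$. For \eqref{Riccint1} I would use the bilinear form of \eqref{idbase} — obtained from \eqref{idbase} by polarization (the state being an affine function of the pair $(x,u)$) — applied to the pairs $(x,u_x)$ and $(y,0)$, with states $\Phi(\cdot,s)x$ and $U(\cdot,s)y$; the mixed term built from $N^{1/2}u-N^{-1/2}G^*A^*Qy$ is again annihilated by the first pair, and the surviving terms collapse to \eqref{Riccint1}. (As $u_x$ is only known to be $L^2$ on $[s,T-\eps]$, one uses that at parameter $\eps$ only the restriction of the control to $[s,T-\eps]$ enters \eqref{idbase}.)

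For the converse it is enough to recover \eqref{Riccintw} from \eqref{Riccint1} (or \eqref{Riccint2}), which I would do by running the substitution backwards: insert \eqref{eqclosedloop} — or, equivalently, its variation-of-constants dual $U(t,s)=\Phi(t,s)+\int_s^t\Phi(t,r)A(r)G(r)N(r)^{-1}G(r)^*A(r)^*Q(r)U(r,s)\,dr$, which follows from \eqref{eqclosedloop} and uniqueness for the corresponding Volterra equation — into the right-hand side, interchange the order of the iterated integrals by Fubini, and then use the transition identity \eqref{transit} together with the very equation being converted from, applied at the intermediate initial times $\sigma\in[s,T-\eps]$, to collapse the inner integrals. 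Every interchange is legitimate because on each compact $[s,T-\eps]$ the operators $U(\cdot,\cdot)$, $M(\cdot)$, $Q(\cdot)$, $G(\cdot)^*A(\cdot)^*Q(\cdot)$ are bounded and the kernel of \eqref{eqclosedloop} obeys \eqref{stimanucleo}, so the singularity at $T$ is never in play; together with the passage from \eqref{Riccintw} to \eqref{Riccintw2} already recorded in the text, this closes the circle of equivalences.

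The conceptual content is thus elementary: it is the rigorous shadow of the one-line computation with the differential Riccati equation. I expect the only genuine work to be the bookkeeping in the reverse implications — tracking which occurrences of $U$ and $\Phi$ are replaced, choosing the Fubini regions, and checking that the repeated substitutions close up instead of producing an unbounded recursion — while no analytic difficulty arises beyond the uniform bounds already available on the subintervals $[s,T-\eps]$.
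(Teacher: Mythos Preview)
Your proposal is correct and, for the forward implications, takes a genuinely different route from the paper. The paper argues by direct substitution: in \eqref{Riccintw} it replaces each occurrence of $U(r,s)x$ by $\Phi(r,s)x+\int_s^r K(r,\sigma)\Phi(\sigma,s)x\,d\sigma$ (from \eqref{eqclosedloop}), expands, applies Fubini, and recognises that the four cross terms $I_1+I_2+I_3+I_4$ regroup, for each fixed $\sigma$, into the integrand of \eqref{Riccintw} with initial time $\sigma$, hence vanish; this yields \eqref{Riccint1}, and then the same substitution in the $y$-slot of \eqref{Riccint1}, using \eqref{Riccint1} itself at intermediate times, produces \eqref{Riccint2}. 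Your route instead recycles the fundamental identity of Lemma~\ref{idcontr} (whose proof in Appendix~\ref{A} already performed exactly that bookkeeping, using only \eqref{Riccintw}, so there is no circularity): choosing the feedback control $u_x=N^{-1}G^*A^*Q\Phi(\cdot,s)x$ kills the last integral in \eqref{idbase} and gives \eqref{Riccint2} on the diagonal, while the polarised identity applied to the pairs $(x,u_x)$ and $(y,0)$ gives \eqref{Riccint1} directly. The advantage of your approach is economy --- the substitution-and-Fubini computation is done once, inside Lemma~\ref{idcontr}, and then reused --- and it makes transparent why the feedback control is the distinguished choice; the paper's approach, on the other hand, is self-contained and does not require setting up the bilinear version of \eqref{idbase}. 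For the converse implications your sketch (reinsert \eqref{eqclosedloop} or its dual, Fubini, and collapse via the equation at intermediate times) is precisely the paper's method run in reverse, and the integrability justifications you cite --- boundedness on $[s,T-\eps]$ and \eqref{stimanucleo} --- are exactly the ones needed.
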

The proof of Lemma \ref{Riccequiv} is in Appendix \ref{B}.
Equation \eqref{Riccint2} is specially useful for our purposes: indeed, when $y=x$ it can be written as 
\beq\label{Riccint2x}
\begin{array}{l}\displaystyle \|Q(s)^{1 / 2}x\|_H^2 = \|Q(T-\eps)^{1 / 2}\Phi(T-\eps,s)x\|_H^2 +\int_s^{T-\eps} \|M(r)^{1 / 2}\Phi(r,s)x\|_H^2 \,dr\\[2mm]
\quad \quad \quad \ \ \displaystyle \qquad +\int_s^{T-\eps} \|N(r)^{-1 / 2}G(r)^*A(r)^*Q(r)\Phi(r,s)x\|_U^2 \,dr, \qquad x\in H.\end{array}
\eeq
Since the integrals are bounded and  monotonically increasing as $\eps \to 0^+$, the first term in the right member is bounded and decreasing. Thus, if $Q\in {\cal Q}$ is a solution of \eqref{Riccintw}, then by \eqref{Riccint2x} we can define
\beq \label{b(s)}
\langle B(s)x,x\rangle_H :=\lim_{\eps \to 0^+} \langle Q(T-\eps)\Phi(T-\eps,s)x,\Phi(T-\eps,s)x\rangle_H\,,\quad x\in H. 
\eeq
We have $B(s)\in \Sigma^+(H)$ and equation \eqref{Riccint2x} becomes, as $\eps\to 0^+$,
\beq \label{Riccint3}
\begin{array}{lll}
\displaystyle \langle Q(s)x,x\rangle_H = \langle B(s)x,x\rangle_H + \int_s^T \|M(r)^{1 / 2}\Phi(r,s)x\|_H^2\,dr  \\[2mm]
\qquad \qquad \quad\ \displaystyle +\int_s^T \|N(r)^{-1 / 2}G(r)^*A(r)^*Q(r)\Phi(r,s)x\|_U^2 \,dr, \quad x\in H.
\end{array}
\eeq
Then we can pass to the limit as $\eps\to 0^+$ in \eqref{Riccint2}, too; using polarization in \eqref{b(s)}, we get
\beq\label{b(s)xy}
\langle B(s)x,y\rangle_H=\lim_{\eps \to 0^+} \langle Q(T-\eps)\Phi(T-\eps,s)x,\Phi(T-\eps,s)y\rangle_H\,,\quad x,y\in H,
\eeq
and
\beq \label{Riccint3bis}
\begin{array}{lll}
\displaystyle \langle Q(s)x,y\rangle_H = \langle B(s)x,y\rangle_H + \int_s^T \langle M(r)\Phi(r,s)x,\Phi(r,s)y\rangle_H\,dr \\[2mm]
\quad \displaystyle +\int_s^T \langle N(r)^{-1}G(r)^*A(r)^*Q(r)\Phi(r,s)x,G(r)^*A(r)^*Q(r)\Phi(r,s)y\rangle_U \,dr, \quad x,y\in H.
\end{array}
\eeq
%The following remark is important.
%\begin{remark}\label{ire_strong} {\em Equations \eqref{Riccint1} and \eqref{Riccint2} can be written in strong form as well: namely
%\beq\label{ire_str1} 
%Q(s)x = U(T-\eps,s)^*Q(T-\eps)\Phi(T-\eps,s)x + \int_s^{T-\eps} U(r,s)^*M(r)\Phi(r,s)x\,dr \quad \forall x\in H;
%\eeq
%\beq\label{ire_str2}
%\begin{array}{lll}
%\displaystyle Q(s)x = \Phi(T-\eps,s)^*Q(T-\eps)\Phi(T-\eps,s)x + \int_s^{T-\eps} \Phi(r,s)^*M(r)\Phi(r,s)x\,dr +\\[2mm]
%\displaystyle \quad +\int_s^{T-\eps} \Phi(r,s)^*Q(r)A(r)G(r)N(r)^{-1} G(r)^*A(r)^*Q(r)\Phi(r,s)x \qquad \forall x\in H.\quad \qed
%\end{array}
%}\end{remark} 
We now recall some properties of the optimal pair $(\widehat{y}, \widehat{u})$ with initial point $x\in H$ and initial time $s$, whose existence was proved in \cite{AT1} and \cite{AT2}. By \cite[Proposition 5.4]{AT1}, it holds
\beq\label{P(s)xx}
\langle P(s)x,x\rangle_H = \hspace{-1mm }J_s(\widehat{u}) = \hspace{-1mm }\int_s^T \|M(r)^{1 / 2}\widehat{y}(r)\|^2_H \, dr + \hspace{-1mm }\int_s^T \|N(r)^{1 / 2}\widehat{u}(r)\|_U^2 \,dr + \|P_T^{1/2}\widehat{y}(T)\|_H^2\,;
\eeq
here $P(\cdot)$ is the classical solution, constructed in \cite{AT1}, of the differential Riccati equation \eqref{DRE}. The optimal control $\widehat{u}$ is given in feedback form by
\beq \label{optcontr_feedback}
\widehat{u}(t;s,x) = N(t)^{-1}G(t)^*A(t)^*P(t)\widehat{y}(t),\quad t\in [s,T[\,.
\eeq
By optimality, $\widehat{u}=\widehat{u}(\cdot;s,x)$ belongs to $D(L_{sT})$. The optimal state $\widehat{y}=\widehat{y}(\cdot;s,x)$ is
\beq \label{opt_state} 
\widehat{y}(t) = \widehat{\Phi}(t,s)x=U(t,s)x +[L_s\widehat{u}](t),\quad t\in [s,T[\,.
\eeq
Now, let $Q(\cdot)$ be any solution of the Riccati equation \eqref{Riccintw}, belonging to the class ${\cal Q}$ defined by (\ref{classeQ}). In order to prove that $P=Q$, we introduce the pair  $(\bar{y},\bar{u})$ where, similarly to \eqref{optcontr_feedback},  $\bar{u}$ is defined in feedback form in terms of $\bar{y}$ and $Q$: namely, for fixed $s\in [0,T[\,$, with $\Phi(t,s)$ defined by \eqref{eqclosedloop}, we set
\beq \label{controlbar} 
\bar{u}(t)=\bar{u}(t;s,x) = N(t)^{-1}G(t)^*A(t)^*Q(t)\Phi(t,s)x,\quad s\le t < T,
\eeq
and
\beq \label{statebar}
\bar{y}(t)=\bar{y}(t;s,x)= \Phi(t,s)x= U(t,s)x+[L_s \bar{u}(\cdot;s,x)](t),\quad s\le t < T.
\eeq
By \eqref{Riccint3} we have $\bar{u}\in L^2(s,T;U)$ and hence, by \cite[Lemma 4.4(i)]{AT1}, $\bar{y}\in L^{\frac2{1-2\alpha}}(s,T;H)$; we note explicitly that, by \eqref{transit}, 
\beq\label{transitu} 
\bar{u}(r;s,x) = \bar{u}(r;t,\Phi(t,s)x)\qquad \forall r\in [t,T[\,, \quad \forall t\in [s,T[\,.
\eeq
Since we do not know whether or not $\bar{u}\in D(L_{sT})$, we define suitable  approximations of $\bar{u}$ and $\bar{y}$:
\beq \label{u_eps_y_eps} u_\eps = u_\eps(\cdot;s,x)=\left\{ \begin{array}{ll} \bar{u}(\cdot;s,x) & \textrm{in } [s,T-\eps] \\[2mm]  0 & \textrm{in } \,]T-\eps,T],\end{array}\right. \qquad y_\eps = U(\cdot,s)x+ L_s u_\eps;
\eeq
then in particular 
\beq\label{y_eps} y_\eps(t)=y_\eps(t;s,x) = \left\{ \begin{array}{ll} \Phi(t,s)x & \textrm{if } t\in [s,T-\eps[ \\[2mm]
U(t,T-\eps)\Phi(T-\eps,s)x & \textrm{if } t\in [T-\eps,T].\end{array}\right.
\eeq
Note that $u_\eps\in D(L_{sT})$, with $L_{sT}u_\eps =U(T,T-\eps)[L_s\bar{u}](T-\eps)$; moreover, $u_\eps \to \bar{u}$ in $L^2(s,T;U)$ and, using \cite[Lemma 4.4(i)]{AT1}, 
\beq\label{regPhi} y_\eps \to \bar{y} \ \textrm{ in } \ L^{\frac2{1-2\alpha}}(s,T;H) \ \textrm{ as } \ \eps \to 0^+.
\eeq 
%---------------------------------------------------------------
\section{Proof of the main results} \label{proof_unic}
%---------------------------------------------------------------
Let $Q(\cdot)$ be any solution of the Riccati equation, belonging to the class ${\cal Q}$ defined by (\ref{classeQ}). We will prove the two inequalities
\beq\label{tesi}
\begin{array}{c}{\textrm{\bf (a)}} \ \langle P(s)x,x\rangle_H \ge \langle Q(s)x,x\rangle _H \quad \forall x\in H, \\[2mm]
{\textrm{\bf (b)}}\ \langle P(s)x,x\rangle_H \le \langle Q(s)x,x\rangle _H\quad \forall x\in H.\end{array}
\eeq
Theorem \ref{unic} will then follow by the arbitrariness of $s\in [0,T[\,$ and a simple polarization argument.
%---------------------------------------------------------------
\subsection{Proof of Theorem \ref{unic}(a)} \label{PgeQ}  
%---------------------------------------------------------------
We start with the following proposition:
\begin{proposition}\label{PleQ1} Let $(\widehat{u},\widehat{y})\in D(L_{sT})\times L^2(s,T;H)$ be the optimal pair, given by \eqref{statebar}-\eqref{controlbar}, and set for every $\eps\in \,]0,T-s[\,$ 
$$\widehat{u}_\eps = \left\{ \begin{array}{ll} \widehat{u} & \quad \textrm{in}\ [s,T-\eps] \\[2mm]  0 & \quad  \textrm{in}\ ]T-\eps,T],\end{array}\right. \quad \widehat{y}_\eps = U(\cdot,s)x+L_s \widehat{u}_\eps;$$
then we have $\widehat{y}_\eps\in C([s,T],H)$ and
\beq\label{optugu}\lim_{\eps\to 0^+}\langle P_T\widehat{y}_\eps(T),\widehat{y}_\eps(T)\rangle_H = \langle P_T\widehat{y}(T),\widehat{y}(T)\rangle_H = \lim_{\eps\to 0^+}\langle P(T-\eps)\widehat{y}(T-\eps),\widehat{y}(T-\eps)\rangle_H\,.
\eeq
\end{proposition}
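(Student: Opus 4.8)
I would prove the three assertions in turn; the first equality in \eqref{optugu} is the delicate one.

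\emph{Continuity of $\widehat y_\eps$.}\quad On $[s,T-\eps]$ one has $\widehat u_\eps=\widehat u$, hence $\widehat y_\eps=\widehat y=\widehat\Phi(\cdot,s)x$, which is continuous there by Lemma \ref{closedloop} applied to the classical solution $Q=P\in{\cal Q}$. On $[T-\eps,T]$ the control vanishes, so for $t\ge T-\eps$ we get $\widehat y_\eps(t)=U(t,s)x-\int_s^{T-\eps}U(t,r)A(r)G(r)\widehat u(r)\,dr=U(t,T-\eps)\widehat y(T-\eps)$, which is continuous by strong continuity of $U(\cdot,T-\eps)$; the two expressions agree at $t=T-\eps$. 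Hence $\widehat y_\eps\in C([s,T],H)$, and $\widehat y_\eps(T)=U(T,T-\eps)\widehat y(T-\eps)\in H$.

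\emph{The right-hand equality in \eqref{optugu}.}\quad I would apply the fundamental identity (Lemma \ref{idcontr}) with $Q=P$, data $(s,x)$, control $\widehat u$ and state $\widehat y$: by the feedback form \eqref{optcontr_feedback} the last integral in \eqref{idbase} vanishes identically, so
\[\langle P(T-\eps)\widehat y(T-\eps),\widehat y(T-\eps)\rangle_H=\langle P(s)x,x\rangle_H-\int_s^{T-\eps}\|M(r)^{1/2}\widehat y(r)\|_H^2\,dr-\int_s^{T-\eps}\|N(r)^{1/2}\widehat u(r)\|_U^2\,dr.\]
Since $\widehat y\in L^2(s,T;H)$ and $\widehat u\in L^2(s,T;U)$, the two integrals converge to their counterparts on $[s,T]$ as $\eps\to0^+$; combined with \eqref{P(s)xx} this gives $\lim_{\eps\to0^+}\langle P(T-\eps)\widehat y(T-\eps),\widehat y(T-\eps)\rangle_H=\|P_T^{1/2}\widehat y(T)\|_H^2=\langle P_T\widehat y(T),\widehat y(T)\rangle_H$.

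\emph{The left-hand equality: reduction.}\quad Here $\widehat u_\eps\in D(L_{sT})$ with $L_{sT}\widehat u_\eps=U(T,T-\eps)[L_s\widehat u](T-\eps)$, so $\widehat y_\eps(T)=U(T,s)x+L_{sT}\widehat u_\eps$ and, by linearity of $L_{sT}$, $P_T^{1/2}(\widehat y(T)-\widehat y_\eps(T))=P_T^{1/2}L_{sT}(\chi_{[T-\eps,T]}\widehat u)$. By \cite[Lemma 4.4(i)]{AT1} one has $\widehat y_\eps\to\widehat y$ in $L^2(s,T;H)$ and $\widehat u_\eps\to\widehat u$ in $L^2(s,T;U)$, hence $\int_s^T\|M^{1/2}\widehat y_\eps\|_H^2\,dr$ and $\int_s^T\|N^{1/2}\widehat u_\eps\|_U^2\,dr$ converge to the $\widehat y$- and $\widehat u$-counterparts; combining this with $J_s(\widehat u_\eps)\ge J_s(\widehat u)=\langle P(s)x,x\rangle_H$ and \eqref{P(s)xx} already yields the lower bound $\liminf_{\eps\to0^+}\langle P_T\widehat y_\eps(T),\widehat y_\eps(T)\rangle_H\ge\langle P_T\widehat y(T),\widehat y(T)\rangle_H$. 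For the reverse inequality, I would write \eqref{Riccintw2} for $Q=P$ at time $T-\eps$, evaluated at $x=y=\widehat y(T-\eps)$; since $U(r,T-\eps)\widehat y(T-\eps)=\widehat y_\eps(r)$ on $[T-\eps,T]$, it becomes
\[\|P_T^{1/2}\widehat y_\eps(T)\|_H^2=\langle P(T-\eps)\widehat y(T-\eps),\widehat y(T-\eps)\rangle_H-\int_{T-\eps}^T\|M(r)^{1/2}\widehat y_\eps(r)\|_H^2\,dr+\int_{T-\eps}^T\|N(r)^{-1/2}G(r)^*A(r)^*P(r)\widehat y_\eps(r)\|_U^2\,dr.\]
By the right-hand equality above and $\int_{T-\eps}^T\|M^{1/2}\widehat y_\eps\|_H^2\,dr\to0$, the assertion is therefore \emph{equivalent} to $\int_{T-\eps}^T\|N(r)^{-1/2}G(r)^*A(r)^*P(r)\widehat y_\eps(r)\|_U^2\,dr\to0$ as $\eps\to0^+$; equivalently, to $\|P_T^{1/2}L_{sT}(\chi_{[T-\eps,T]}\widehat u)\|_H\to0$, or to $J_s(\widehat u_\eps)\to J_s(\widehat u)$.

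\emph{The main obstacle.}\quad This last convergence is the heart of the matter. Since $\chi_{[T-\eps,T]}\widehat u\to0$ in $L^2(s,T;U)$ and $P_T^{1/2}L_{sT}$ is closed (a consequence of Hypothesis \ref{hpmain7} and \eqref{Ls0T}), it would be enough to bound $\{P_T^{1/2}L_{sT}(\chi_{[T-\eps,T]}\widehat u)\}$ uniformly in $\eps$ and then improve the resulting weak convergence to strong convergence; the difficulty is that in $N(r)^{-1/2}G(r)^*A(r)^*P(r)\widehat y_\eps(r)$ the singular factor $(T-r)^{\alpha-1}$ carried by $[-A(r)^*]^{1-\alpha}P(r)$ sits at the \emph{output} variable, so that $r\mapsto N(r)^{-1/2}G(r)^*A(r)^*P(r)$ is not square-integrable near $T$ and the naive triangle-inequality estimates diverge. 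I would instead compare the free and the closed-loop evolutions issued from $\widehat y(T-\eps)$ at time $T-\eps$: by \eqref{eqclosedloop} (and \eqref{transit}), for $r\in[T-\eps,T[$,
\[\widehat y_\eps(r)=\widehat y(r)+\int_{T-\eps}^rU(r,\tau)A(\tau)G(\tau)N(\tau)^{-1}G(\tau)^*A(\tau)^*P(\tau)\widehat y(\tau)\,d\tau,\]
so that, using \eqref{optcontr_feedback}, $N(r)^{-1/2}G(r)^*A(r)^*P(r)\widehat y_\eps(r)=N(r)^{1/2}\widehat u(r)+\int_{T-\eps}^r\widetilde K(r,\tau)\,N(\tau)^{1/2}\widehat u(\tau)\,d\tau$, where $\widetilde K(r,\tau)=N(r)^{-1/2}G(r)^*A(r)^*P(r)U(r,\tau)A(\tau)G(\tau)N(\tau)^{-1/2}$ is estimated via \eqref{stimaUAG} and the bound $\|[-A(r)^*]^{1-\alpha}P(r)\|_{{\cal L}(H)}\le c(T-r)^{\alpha-1}$ of the class ${\cal Q}$. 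The leading term $N(\cdot)^{1/2}\widehat u\in L^2(s,T;U)$ has vanishing $L^2$-norm over $[T-\eps,T]$; the correction term must then be controlled by a weighted/interpolation argument that genuinely exploits the cancellation in $\widetilde K$ — in practice, the finer weighted-H\"older regularity of the optimal pair established in \cite{AT1}, \cite{AT2} — rather than by norm estimates alone. This is the step I expect to require the most care; granted it, the left-hand equality in \eqref{optugu}, and with it the whole proposition, follow.
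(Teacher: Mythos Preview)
Your treatment of the continuity of $\widehat y_\eps$ and of the right-hand equality in \eqref{optugu} matches the paper's proof exactly.

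For the left-hand equality, however, you have turned a one-line step into an unfinished programme. Your reduction is correct: the claim is equivalent to $\|P_T^{1/2}L_{sT}(\chi_{[T-\eps,T]}\widehat u)\|_H\to 0$, and since $L_{sT}(\chi_{[T-\eps,T]}\widehat u)=L_{T-\eps,T}\widehat u$, this is precisely $\|P_T^{1/2}L_{T-\eps,T}\widehat u\|_H\to 0$. But that convergence is exactly the content of \cite[Proposition 4.2(ii)]{AT1}, which the paper simply quotes (this is where the closedness of $P_T^{1/2}L_{0T}$ in Hypothesis \ref{hpmain7} does its real work, for the \emph{optimal} control $\widehat u$). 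So the ``main obstacle'' you identify is not an obstacle at all: once you have written $P_T^{1/2}(\widehat y(T)-\widehat y_\eps(T))=P_T^{1/2}L_{T-\eps,T}\widehat u$, you are done.

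By contrast, your proposed workaround via the decomposition $N(r)^{-1/2}G(r)^*A(r)^*P(r)\widehat y_\eps(r)=N(r)^{1/2}\widehat u(r)+\int_{T-\eps}^r\widetilde K(r,\tau)N(\tau)^{1/2}\widehat u(\tau)\,d\tau$ leads to a kernel with the double singularity $(T-r)^{\alpha-1}(r-\tau)^{\alpha-1}$, which is \emph{not} square-integrable in $r$ near $T$; the vague appeal to ``finer weighted-H\"older regularity'' and ``cancellation in $\widetilde K$'' is not a proof, and you yourself flag it as incomplete. Drop this detour and invoke \cite[Proposition 4.2(ii)]{AT1} directly.
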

\begin{proof}
It is well known that $\widehat{y}\in C([s,T[\,,H)$, so that $\widehat{y}_\eps\in C([s,T-\eps],H)$. On the other hand
$$\widehat{y}_\eps(t)= U(t,s)x+[L_s\widehat{u}_\eps](t)=U(t,s)x+U(t,T-\eps)[L_s\widehat{u}](T-\eps) \qquad \forall t\in [T-\eps,T];$$
this implies $\widehat{y}_\eps\in C([s,T],H)$. Next, we note that, by the definition of $\widehat{u}_\eps$\,,
$$L_{sT}(\widehat{u}_\varepsilon-\widehat{u}) = A(T) \int_{T-\eps}^T A(T)^{-1}U(T,r)A(r)G(r)\widehat{u}(r)\,dr = -L_{T-\varepsilon\,T} \widehat{u};$$
Now we have obviously
$$\widehat{u}_\varepsilon\to \widehat{u} \quad \textrm{in } L^2(s,T;U) \quad \textrm{as } \eps\to 0^+;$$
moreover, by \cite[Proposition 4.2(ii)]{AT1},
$$P_T^{1/2}L_{sT}(\widehat{u}_\varepsilon - \widehat{u}) = - P_T^{1/2}L_{T-\varepsilon\,T} \widehat{u} \to 0 \quad \textrm{in } H \quad \textrm{as } \eps\to 0^+.$$
Hence, as $ \eps\to 0^+$, 
$$P_T^{1/2}\widehat{y}_\varepsilon(T)=P_T^{1/2}U(T,s)x+P_T^{1/2}L_{sT}\widehat{u}_\varepsilon \to P_T^{1/2}U(T,s)x+P_T^{1/2}L_{sT}\widehat{u}=P_T^{1/2}\widehat{y}(T) \quad \textrm{in } H,$$
which implies the first equality in \eqref{optugu}.\\
Next, choosing in Lemma \ref{idcontr} $Q=P$, $u=\widehat{u}$, $y=\widehat{y}$, we obtain by \eqref{idbase} and \eqref{optcontr_feedback}:
$$\begin{array}{l}
\langle P(T-\eps)\widehat{y}(T-\eps),\widehat{y}(T-\eps)\rangle_H - \langle P(s)x,x\rangle_H \\[2mm]
\quad \qquad \qquad \quad \ \displaystyle =-\int_s^{T-\eps} \|M(r)^{1 / 2}\widehat{y}(r)\|_H^2\,dr -\int_s^{T-\eps} \|N(r)^{1 / 2}\widehat{u}(r)\|_U^2\,dr + 0,\end{array}$$
and letting $\eps \to 0^+$ we get, using \eqref{P(s)xx},
\beyy
\lefteqn{\lim_{\eps\to 0^+}\langle P(T-\eps)\widehat{y}(T-\eps),\widehat{y}(T-\eps)\rangle_H}\\
& & =\langle P(s)x,x\rangle_H -\int_s^T \|M(r)^{1 / 2}\widehat{y}(r)\|_H^2\,dr -\int_s^T \|N(r)^{1 / 2}\widehat{u}(r)\|_U^2\,dr \\
& & = J_s(\widehat{u}) -\int_s^T \|M(r)^{1 / 2}\widehat{y}(r)\|_H^2\,dr -\int_s^T \|N(r)^{1 / 2}\widehat{u}(r)\|_U^2\,dr = \langle P_T\widehat{y}(T),\widehat{y}(T)\rangle_H\,,
\eeyy
which gives the second equality in \eqref{optugu}. 
\end{proof}
We now complete the proof of {\bf (a)}. For fixed $\eps\in\,]0,T-s[\,$,  replace in \eqref{idbase} $\eps$ by $\delta$, with $0<\delta <\eps$, and choose 
$u=\widehat{u}_\eps$, $y=\widehat{y}_\eps$. Then we find 
\beyy
\langle Q(s)x,x\rangle_H & = & \langle Q(T-\delta)\widehat{y}_\eps(T-\delta), \widehat{y}_\eps(T-\delta)\rangle_H \\[2mm]
& & +\int_s^{T-\delta}\langle M(r)\widehat{y}_\eps(r),\widehat{y}_\eps(r)\rangle_H \,dr + \int_s^{T-\delta}\langle N(r)\widehat{u}_\eps(r),\widehat{u}_\eps(r)\rangle_U \,dr \\
& & -\int_s^{T-\delta}\|N(r)^{1 / 2}\widehat{u}_\eps(r) -N(r)^{-1 / 2} G(r)^*A(r)^*Q(r)\widehat{y}_\eps(r)\|_U^2\,dr \\[2mm]
& \le & \langle Q(T-\delta)\widehat{y}_\eps(T-\delta), \widehat{y}_\eps(T-\delta)\rangle_H \\[2mm]
& & +\int_s^{T-\delta}\langle M(r)\widehat{y}_\eps(r),\widehat{y}_\eps(r)\rangle_H \,dr + \int_s^{T-\delta}\langle N(r)\widehat{u}_\eps(r),\widehat{u}_\eps(r)\rangle_U \,dr.
\eeyy
As $\delta\to 0^+$, since we know that $\widehat{y}_\eps\in C([s,T],H)$ we deduce
\beyy
\langle Q(s)x,x\rangle_H & \le & \langle P_T\widehat{y}_\eps(T), \widehat{y}_\eps(T)\rangle_H \\
& & +\int_s^{T}\langle M(r)\widehat{y}_\eps(r),\widehat{y}_\eps(r)\rangle_H \,dr + \int_s^{T}\langle N(r)\widehat{u}_\eps(r),\widehat{u}_\eps(r)\rangle_U \,dr.
\eeyy
Finally, we let $\eps \to 0^+$: noting that $\widehat{y}_\eps\to \widehat{y}$ in $L^2(s,T;H)$ and $\widehat{u}_\eps\to \widehat{u}$ in $L^2(s,T;U)$, and using the first equality in \eqref{optugu}, we get
\beyy
\langle Q(s)x,x\rangle_H & \le & \langle P_T\widehat{y}(T),\widehat{y}(T)\rangle_H + \int_s^{T}\langle M(r)\widehat{y}(r),\widehat{y}(r)\rangle_H \,dr + \int_s^{T}\langle N(r)\widehat{u}(r),\widehat{u}(r)\rangle_U \,dr \\
& = & J_s(\widehat{u}) = \langle P(s)x,x\rangle_H\,.
\eeyy
This proves {\bf (a)} in \eqref{tesi}.
%---------------------------------------------------------------
\subsection{Proof of Theorem \ref{unic}(b)} \label{PleQ}  
%---------------------------------------------------------------
This proof is longer. \\
First, we list some statements where certain terms of equations \eqref{Riccintw2}, \eqref{Riccint1} and \eqref{Riccint2} are analyzed. 
The first one is easy and concerns two integral terms in \eqref{Riccintw2} (with $y=x$) and \eqref{Riccint3}.
\begin{lemma}\label{Mu_Mphi} We have 
$$
\lim_{\eps\to 0^+} \int_{T-\eps}^T \|M(r)^{1 / 2}U(r,T-\eps)x\|_H^2\,dr =\lim_{\eps\to 0^+} \int_{T-\eps}^T \|M(r)^{1 / 2}\Phi(r,T-\eps)x\|_H^2\,dr = 0.
$$
\end{lemma}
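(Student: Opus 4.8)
The plan is to show both integrals vanish by exhibiting integrable majorants that concentrate their mass near $T$. I would first treat the term with $U(r,T-\eps)$. Since $M(\cdot)$ is bounded in $\cL(H)$ uniformly on $[0,T]$ by Hypothesis \ref{hpmain6}, say $\|M(r)^{1/2}\|_{\cL(H)}^2\le K$, and $U$ is a strongly continuous evolution operator (so uniformly bounded, $\|U(r,T-\eps)\|_{\cL(H)}\le M_0$ on the compact parameter set), we get the crude bound $\|M(r)^{1/2}U(r,T-\eps)x\|_H^2\le KM_0^2\|x\|_H^2$ for $r\in[T-\eps,T]$. Hence $\int_{T-\eps}^T\|M(r)^{1/2}U(r,T-\eps)x\|_H^2\,dr\le KM_0^2\|x\|_H^2\,\eps\to 0$ as $\eps\to0^+$. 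This handles the first limit with room to spare.

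For the second term I would argue similarly, but the subtlety is that $\Phi(r,T-\eps)$ involves the closed-loop kernel $K(r,\rho)$ whose bound $\|K(r,\rho)\|_{\cL(H)}\le c(r-\rho)^{\alpha-1}(T-\rho)^{\alpha-1}$ from \eqref{stimanucleo}-type estimates degenerates as $\rho\to T$. However, on the short interval $[T-\eps,T]$ one still has $\Phi(r,T-\eps)=(1+K_{T-\eps})^{-1}U(\cdot,T-\eps)$ in the appropriate space, and the point is to get a uniform-in-$\eps$ bound for $\|\Phi(r,T-\eps)\|_{\cL(H)}$ for $r$ near $T$. The cleanest route is to use the representation $\Phi(r,T-\eps)x = \bar y(r;T-\eps,x)$ together with \eqref{Riccint3} applied with initial time $T-\eps$: namely
\[
\langle Q(T-\eps)x,x\rangle_H = \langle B(T-\eps)x,x\rangle_H + \int_{T-\eps}^T \|M(\rho)^{1/2}\Phi(\rho,T-\eps)x\|_H^2\,d\rho + \int_{T-\eps}^T \|N(\rho)^{-1/2}G(\rho)^*A(\rho)^*Q(\rho)\Phi(\rho,T-\eps)x\|_U^2\,d\rho,
\]
so that the middle term on the right is bounded by $\langle Q(T-\eps)x,x\rangle_H$. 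Since $Q\in\cL^\infty(0,T;\Sigma^+(H))$, say $\|Q(t)\|_{\cL(H)}\le K$, we obtain the uniform bound $\int_{T-\eps}^T\|M(\rho)^{1/2}\Phi(\rho,T-\eps)x\|_H^2\,d\rho\le K\|x\|_H^2$ for every $\eps$. This does not yet give the limit; to conclude I would instead note the stronger consequence that the \emph{single} integral $\int_s^T\|M(\rho)^{1/2}\Phi(\rho,s)x\|_H^2\,d\rho$ is finite (from \eqref{Riccint3} with fixed $s$), that $\Phi(\rho,s)=\Phi(\rho,T-\eps)\Phi(T-\eps,s)$ by \eqref{transit}, and hence the tail $\int_{T-\eps}^T\|M(\rho)^{1/2}\Phi(\rho,s)x\|_H^2\,d\rho\to0$ as $\eps\to0^+$ by absolute continuity of the integral. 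To pass from this to the stated limit with $\Phi(\rho,T-\eps)x$ in place of $\Phi(\rho,s)x$, I would fix an auxiliary initial time $s_0\in[0,T[$, write $x=\Phi(T-\eps,s_0)z$ only when $x$ lies in the range — which is awkward — so it is better to run the whole argument directly: apply \eqref{Riccint3} at initial time $T-\eps$ to get that the integral equals $\langle Q(T-\eps)x,x\rangle_H - \langle B(T-\eps)x,x\rangle_H$ minus the nonnegative control term, and then show $\langle Q(T-\eps)x,x\rangle_H - \langle B(T-\eps)x,x\rangle_H\to 0$.

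The main obstacle, therefore, is precisely controlling $\langle Q(T-\eps)x,x\rangle_H - \langle B(T-\eps)x,x\rangle_H$ as $\eps\to0^+$, i.e. showing that $B(T-\eps)$ ``catches up'' to $Q(T-\eps)$ in the quadratic-form sense, equivalently that both the $M$-integral and the $N$-integral over $[T-\eps,T]$ in \eqref{Riccint3} (at initial time $T-\eps$) vanish. For the $N$-integral this is the genuinely hard part and presumably the content of the lemmas that follow in the paper; but for the $M$-integral alone — which is all Lemma \ref{Mu_Mphi} asks — I expect the intended short proof to be the crude one: bound $\|\Phi(\rho,T-\eps)\|_{\cL(H)}$ near $T$ by a constant independent of $\eps$ using \eqref{stimanucleo} restricted to $[T-\eps,T]$ (where $(\rho-\rho')^{\alpha-1}$ is still integrable and $(T-\rho')^{\alpha-1}$ only makes the kernel small, not large, away from $T$ — one must be slightly careful, but a Gronwall/Neumann-series estimate gives $\sup_{\rho\in[T-\eps,T]}\|\Phi(\rho,T-\eps)\|_{\cL(H)}\le C$ for all small $\eps$), and then $\int_{T-\eps}^T\|M(\rho)^{1/2}\Phi(\rho,T-\eps)x\|_H^2\,d\rho\le KC^2\|x\|_H^2\,\eps\to0$. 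Putting the two estimates together yields both limits and completes the proof of Lemma \ref{Mu_Mphi}.
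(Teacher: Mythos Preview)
Your treatment of the first limit is correct and matches the paper: $M(\cdot)^{1/2}U(\cdot,T-\eps)$ is uniformly bounded, so the integral is $O(\eps)$.

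The second limit is where your argument has a genuine gap. Your final route is to claim a uniform-in-$\eps$ bound $\sup_{\rho\in[T-\eps,T]}\|\Phi(\rho,T-\eps)\|_{\cL(H)}\le C$ via a Gronwall/Neumann iteration on the closed-loop kernel. This does not work: the kernel bound is $\|K(t,r)\|_{\cL(H)}\le c(t-r)^{\alpha-1}(T-r)^{\alpha-1}$, and as $t\to T^-$ the relevant integral behaves like $\int_0^\eps s^{2\alpha-2}\,ds$, which diverges since $\alpha<\tfrac12$. So the iteration blows up, and in fact $\Phi(\cdot,T-\eps)x=\bar y(\cdot;T-\eps,x)$ need not be bounded near $T$ at all --- the paper itself remarks (before Theorem~\ref{reg_Py}) that the optimal state ``may be unbounded as $t\to T^-$'', and the same mechanism applies to $\bar y$. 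Your earlier attempts (absolute continuity of a tail with fixed $s$; showing $\langle Q(T-\eps)x,x\rangle-\langle B(T-\eps)x,x\rangle\to 0$) you correctly diagnosed as either not transferring to initial time $T-\eps$ or circular.

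The paper's fix is short but uses a different norm: from \eqref{Riccint3} at initial time $T-\eps$ one gets $\bar u(\cdot;T-\eps,x)\in L^2(T-\eps,T;U)$ with norm $\le c\|x\|_H$ uniformly in $\eps$, hence by \cite[Lemma~4.4(i)]{AT1} (the mapping property of $L_s$) one has $\bar y(\cdot;T-\eps,x)\in L^{2/(1-2\alpha)}(T-\eps,T;H)$ with norm $\le c'\|x\|_H$ uniformly. Then H\"older with exponents $\tfrac{1}{2\alpha}$ and $\tfrac{1}{1-2\alpha}$ gives
\[
\int_{T-\eps}^T \|M(r)^{1/2}\Phi(r,T-\eps)x\|_H^2\,dr \le K\,\eps^{2\alpha}\,\|\bar y(\cdot;T-\eps,x)\|_{L^{2/(1-2\alpha)}}^2 \le K'\eps^{2\alpha}\|x\|_H^2 \to 0.
\]
The point you are missing is precisely this replacement of an (unavailable) $L^\infty$ bound on $\Phi$ by the (available) $L^{2/(1-2\alpha)}$ bound, which still leaves a positive power of $\eps$ after H\"older.
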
 
\begin{proof}
The first limit is trivial, since $M(\cdot)^{1 / 2}U(\cdot,T-\eps)$ is uniformly bounded in ${\cal L}(H)$, independently on $\eps$. Concerning the second one, we have, using H\"older inequality and \eqref{regPhi}: 
$$\int_{T-\eps}^T \|M(r)^{1 / 2}\Phi(r,T-\eps)x\|_H^2\,dr \le K_1\,\eps^{2\alpha} \|\bar{y}(\cdot;T-\eps,x)\|_{L^{\frac2{1-2\alpha}}(T-\eps,T;H)}^2 \le K_2\,\eps^{2\alpha}\|x\|_H^2$$
for some absolute constants $K_1,K_2>0$. The result follows.  
\end{proof}
The second lemma analyzes the last integral term of \eqref{Riccintw2} with $y=x$. 
\begin{lemma}\label{stimaU} We have
$$\lim_{\eps\to 0^+} \int_{T-\eps}^T \|N(r)^{-1 / 2}G(r)^* A(r)^*Q(r)U(r,T-\eps)x\|_U^2\,dr = 0.$$
\end{lemma}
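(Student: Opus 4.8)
The plan is \emph{not} to estimate this integral directly. A brute-force bound — combining the uniform boundedness of $N(\cdot)^{-1/2}$ and of $U(r,T-\eps)$, the factorization $G(r)^*A(r)^*Q(r)=-([-A(r)]^\alpha G(r))^*[-A(r)^*]^{1-\alpha}Q(r)$, the uniform boundedness of $[-A(r)]^\alpha G(r)$ from Hypothesis \ref{hpmain5}, and the estimate $\|[-A(r)^*]^{1-\alpha}Q(r)\|_{{\cal L}(H)}\le c(T-r)^{\alpha-1}$ from the definition \eqref{classeQ} of ${\cal Q}$ — yields an integrand of order $(T-r)^{2\alpha-2}$, which is \emph{not} integrable near $T$, since $\alpha<\frac12$. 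Hence the needed cancellation must be extracted from the Riccati equation itself, not obtained by absolute values.

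The idea is therefore to read off the integral from the weak integral Riccati equation localized near $T$. Using \eqref{Riccintw2} with initial time $s$ replaced by $T-\eps$ and with $y=x$ — which is legitimate, since \eqref{Riccintw2} was obtained for an arbitrary initial time in $[0,T[\,$ — and rearranging, the integral in the statement equals
$$\|P_T^{1/2}U(T,T-\eps)x\|_H^2 + \int_{T-\eps}^T \|M(r)^{1/2}U(r,T-\eps)x\|_H^2\,dr - \langle Q(T-\eps)x,x\rangle_H\,.$$
Now I would let $\eps\to 0^+$ term by term: the integral tends to $0$ by Lemma \ref{Mu_Mphi}; the first term tends to $\|P_T^{1/2}x\|_H^2$, because $U(T,T-\eps)x\to x$ in $H$ (strong continuity of $U(\cdot,\cdot)$ up to the diagonal, $U(T,T)=I$) and $P_T^{1/2}\in{\cal L}(H)$; and $\langle Q(T-\eps)x,x\rangle_H\to\langle P_Tx,x\rangle_H=\|P_T^{1/2}x\|_H^2$ by the convergence $Q(t)x\to P_Tx$ built into the class ${\cal Q}$ in \eqref{classeQ}. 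Hence the two $\|P_T^{1/2}x\|_H^2$ contributions cancel and the limit is $0$.

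There is no genuine obstacle here; the only point to watch is that the two boundary-type quantities $\|P_T^{1/2}U(T,T-\eps)x\|_H^2$ and $\langle Q(T-\eps)x,x\rangle_H$ converge to the \emph{same} value $\|P_T^{1/2}x\|_H^2$, which is exactly what the definition of ${\cal Q}$ provides together with the boundedness of $P_T^{1/2}$ and the strong continuity of $U(T,\cdot)$. The real content of the lemma is thus the structural observation that the apparently non-integrable quantity is handed to us, with the correct sign, by the integral Riccati equation read on the shrinking interval $[T-\eps,T]$.
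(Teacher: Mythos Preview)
Your proof is correct and follows essentially the same route as the paper: both apply \eqref{Riccintw2} with $s=T-\eps$ and $y=x$, solve for the integral in question, and pass to the limit using Lemma \ref{Mu_Mphi} together with the strong convergences $P_T^{1/2}U(T,T-\eps)x\to P_T^{1/2}x$ and $Q(T-\eps)x\to P_Tx$. Your preliminary remark that a direct absolute-value estimate fails because the resulting $(T-r)^{2\alpha-2}$ is non-integrable is a helpful clarification of why the Riccati identity is needed.
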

\begin{proof} By \eqref{Riccintw2} with $y=x$ and $s=T-\eps$ we have
\beyy
\lefteqn{\int_{T-\eps}^T \|N(r)^{-1 / 2}G(r)^* A(r)^*Q(r)U(r,T-\eps)x\|_U^2\,dr } \\
& & = \|P_T^{1/2}U(T,T-\eps)x\|_H^2 + \int_{T-\eps}^T \|M(r)^{1 / 2} U(r,T-\eps)x\|_H^2\,dr - \|Q(T-\eps)^{1 / 2}x\|_H^2 \,.
\eeyy
As $\eps\to 0^+$ the result follows, since in the right-hand side the integral term goes to $0$ by Lemma \ref{Mu_Mphi}, while both $P_T^{1/2}U(T,T-\eps)x$ and $Q(T-\eps)^{1 / 2}x$ converge in $H$ to $P_T^{1/2}x$ as $\eps\to 0^+$.  
\end{proof}
The next result concerns the last integral term of \eqref{Riccint3}.
\begin{lemma}\label{stimaPhi} We have
$$\lim_{\eps\to 0^+} \int_{T-\eps}^T \|N(r)^{-1 / 2}G(r)^* A(r)^*Q(r)\Phi(r,T-\eps)x\|_U^2\,dr =0.$$
\end{lemma}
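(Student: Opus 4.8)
The plan is to exploit equation \eqref{Riccint2x} with $s=T-\eps$ in exactly the same spirit as Lemma \ref{stimaU} was derived from \eqref{Riccintw2}. Writing \eqref{Riccint2x} on the interval $[T-\eps,T]$ and passing to the limit as the inner truncation parameter $\delta\to 0^+$ (so that $T-\delta\to T$), we obtain
\beq\label{auxeq}
\begin{array}{l}
\displaystyle \|Q(T-\eps)^{1/2}x\|_H^2 = \langle B(T-\eps)x,x\rangle_H +\int_{T-\eps}^T \|M(r)^{1/2}\Phi(r,T-\eps)x\|_H^2\,dr\\[3mm]
\displaystyle \qquad \qquad\qquad +\int_{T-\eps}^T \|N(r)^{-1/2}G(r)^*A(r)^*Q(r)\Phi(r,T-\eps)x\|_U^2\,dr,
\end{array}
\eeq
where $B(\cdot)$ is the operator defined in \eqref{b(s)}–\eqref{Riccint3}. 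Thus the target integral equals $\|Q(T-\eps)^{1/2}x\|_H^2 - \langle B(T-\eps)x,x\rangle_H - \int_{T-\eps}^T\|M(r)^{1/2}\Phi(r,T-\eps)x\|_H^2\,dr$, and by Lemma \ref{Mu_Mphi} the last term vanishes as $\eps\to 0^+$. So everything reduces to showing that $\|Q(T-\eps)^{1/2}x\|_H^2 - \langle B(T-\eps)x,x\rangle_H \to 0$ as $\eps\to 0^+$.

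Next I would identify both limits. Since $Q\in\mathcal Q$, we have $Q(T-\eps)x\to P_Tx$ in $H$, hence $\|Q(T-\eps)^{1/2}x\|_H^2 = \langle Q(T-\eps)x,x\rangle_H \to \langle P_Tx,x\rangle_H = \|P_T^{1/2}x\|_H^2$. For the $B$ term, recall from \eqref{b(s)} that $\langle B(T-\eps)x,x\rangle_H = \lim_{\delta\to 0^+}\langle Q(T-\delta)\Phi(T-\delta,T-\eps)x,\Phi(T-\delta,T-\eps)x\rangle_H$; using again $Q(T-\delta)\to P_T$ strongly together with $\Phi(T-\delta,T-\eps)x = y_\delta(T-\delta;T-\eps,x)$ and the representation $\Phi(t,T-\eps)x = U(t,T-\eps)x + [L_{T-\eps}\bar u(\cdot;T-\eps,x)](t)$, one sees that $\langle B(T-\eps)x,x\rangle_H = \|P_T^{1/2}\bar y(T;T-\eps,x)\|_H^2$ in the sense of the closed operator $P_T^{1/2}L_{T-\eps\,T}$, i.e. $= \|P_T^{1/2}U(T,T-\eps)x + P_T^{1/2}L_{T-\eps\,T}\bar u(\cdot;T-\eps,x)\|_H^2$, provided $\bar u(\cdot;T-\eps,x)\in D(L_{T-\eps\,T})$. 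Arguing as in Proposition \ref{PleQ1}, using \cite[Proposition 4.2(ii)]{AT1} (strong continuity of $P_T^{1/2}L_{T-\delta\,T}$ at $\delta=0$) and the approximations $u_\delta,y_\delta$ of \eqref{u_eps_y_eps}–\eqref{regPhi}, one gets that $P_T^{1/2}\Phi(T-\delta,T-\eps)x \to P_T^{1/2}\bar y(T;T-\eps,x)$ in $H$ as $\delta\to 0^+$; then as $\eps\to 0^+$ both $P_T^{1/2}U(T,T-\eps)x\to P_T^{1/2}x$ and $P_T^{1/2}L_{T-\eps\,T}\bar u(\cdot;T-\eps,x)\to 0$ (again by the closedness/continuity of $P_T^{1/2}L_{(\cdot)\,T}$ and the fact that $\bar u(\cdot;T-\eps,x)$ is supported on a shrinking interval with norm controlled via \eqref{Riccint3}). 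Hence $\langle B(T-\eps)x,x\rangle_H \to \|P_T^{1/2}x\|_H^2$, and combining with the previous limit the difference tends to $0$, which is the assertion.

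The main obstacle I expect is the rigorous justification that $\bar u(\cdot;T-\eps,x)\in D(L_{T-\eps\,T})$ and that $P_T^{1/2}L_{T-\eps\,T}\bar u(\cdot;T-\eps,x)\to 0$ — i.e. handling the membership in the domain of the closed operator $P_T^{1/2}L_{0T}$ from Hypothesis \ref{hpmain7}, rather than a bounded operator. This is exactly the delicate point that the whole paper is built around, and it has to be done through the approximation scheme $u_\delta\to\bar u$, $y_\delta\to\bar y$ of \eqref{u_eps_y_eps}–\eqref{regPhi} combined with the closedness of $P_T^{1/2}L_{sT}$: one checks $u_\delta(\cdot;T-\eps,x)\in D(L_{T-\eps\,T})$ with $L_{T-\eps\,T}u_\delta = U(T,T-\delta)[L_{T-\eps}\bar u](T-\delta)$, that $P_T^{1/2}L_{T-\eps\,T}u_\delta$ converges (by \eqref{Riccint2x} the relevant quantities $\|Q(T-\delta)^{1/2}\Phi(T-\delta,T-\eps)x\|_H$ are monotone decreasing and bounded, hence Cauchy in the appropriate sense), and concludes by closedness that the limit $\bar u(\cdot;T-\eps,x)$ lies in $D(L_{T-\eps\,T})$ with $P_T^{1/2}L_{T-\eps\,T}\bar u = \lim_\delta P_T^{1/2}L_{T-\eps\,T}u_\delta$. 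Once this identification is secured, the limit $\eps\to 0^+$ is routine and the lemma follows.
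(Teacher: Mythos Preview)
Your reduction via \eqref{Riccint3} is correct up to the point where you need
\[
\langle B(T-\eps)x,x\rangle_H \longrightarrow \|P_T^{1/2}x\|_H^2,
\]
but the argument you sketch for this limit is circular. Establishing that $\bar u(\cdot;T-\eps,x)\in D(L_{T-\eps\,T})$ and identifying $\langle B(T-\eps)x,x\rangle_H$ with $\|P_T^{1/2}\bar y(T;T-\eps,x)\|_H^2$ is precisely the content of Proposition~\ref{u_in_D(LsT)} (together with Corollary~\ref{B=C*C}), and in the paper's logical order that proposition depends on Lemma~\ref{range_limit}, whose proof in turn invokes the present Lemma~\ref{stimaPhi}. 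So you cannot appeal to that machinery here. Your attempt to bypass it directly does not work either: the monotone decrease of the scalar quantity $\|Q(T-\delta)^{1/2}\Phi(T-\delta,T-\eps)x\|_H^2$ gives convergence of a real number, not convergence of the vectors $Q(T-\delta)^{1/2}\Phi(T-\delta,T-\eps)x$ in $H$, let alone of the different quantity $P_T^{1/2}U(T,T-\delta)\Phi(T-\delta,T-\eps)x$ that you actually need in order to apply closedness of $P_T^{1/2}L_{sT}$. Showing strong convergence of the latter is exactly the hard step the paper spends Lemmas~\ref{C(s)}--\ref{legameBCeps} and Proposition~\ref{u_in_D(LsT)} on.

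The paper's proof avoids all of this by a purely algebraic comparison: taking the linear combination \eqref{Riccintw2} $-$ $2\times$\eqref{Riccint1} $+$ \eqref{Riccint2} with $y=x$ (all on $[s,T-\delta]$) produces an identity \eqref{a+c-2b} in which two of the four terms are manifestly non-negative. Dropping them yields
\[
\int_s^{T-\delta}\|N(r)^{-1/2}G(r)^*A(r)^*Q(r)\Phi(r,s)x\|_U^2\,dr \ \le\ \int_s^{T-\delta}\|N(r)^{-1/2}G(r)^*A(r)^*Q(r)U(r,s)x\|_U^2\,dr,
\]
and after letting $\delta\to 0^+$ and setting $s=T-\eps$, the right-hand side tends to $0$ by Lemma~\ref{stimaU}. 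This comparison inequality needs none of the domain considerations for $P_T^{1/2}L_{sT}$; that is the missing idea.
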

\begin{proof} 
With $y=x$, we sum equation \eqref{Riccintw2},  minus twice equation \eqref{Riccint1}, plus equation \eqref{Riccint2}. The result is
\beq \label{a+c-2b} 
\begin{array}{ll} 
0 = & \displaystyle \|Q(T-\eps)^{1 / 2}U(T-\eps,s)x-Q(T-\eps)^{1 / 2}\Phi(T-\eps,s)x\|_H^2 \\[2mm]
& \displaystyle \qquad + \int_s^{T-\eps} \|M(r)^{1 / 2}U(r,s)x-M(r)^{1 / 2}\Phi(r,s)x\|_H^2\,dr \\[4mm]
& \displaystyle \qquad - \int_s^{T-\eps} \|N(r)^{-1 / 2}G(r)^* A(r)^*Q(r)U(r,s)x\|_U^2\,dr \\[4mm]
& \displaystyle \qquad + \int_s^{T-\eps} \|N(r)^{-1 / 2}G(r)^* A(r)^*Q(r)\Phi(r,s)x\|_U^2\,dr.
\end{array}
\eeq
Replace $\eps$ by $\delta$. It follows that
$$\int_s^{T-\delta}\hspace{-2mm}\|N(r)^{-1 / 2}G(r)^* A(r)^*Q(r)\Phi(r,s)x\|_U^2\,dr \le \int_s^{T-\delta} \hspace{-2mm}\|N(r)^{-1 / 2}G(r)^* A(r)^*Q(r)U(r,s)x\|_U^2\,dr.$$
Letting $\delta \to 0^+$ and replacing $s$ by $T-\eps$, the result is a consequence of Lemma \ref{stimaU}.
\end{proof}
In the next lemma we introduce a linear operator, whose importance is basic in the sequel. To this purpose it is useful to define:
\beq\label{proj}
\Pi:H\to H, \quad \Pi=\textrm{orthogonal projection onto } \overline{R(P_T^{1/2})}.
\eeq
\begin{lemma}\label{C(s)} Let $s\in [0,T[\,$. There exists $C(s)\in {\cal L}(H)$, with range contained in $\overline{R(P_T^{1/2})}$, such that $\Pi Q(T-\eps)^{1 / 2}\Phi(T-\eps,s)x\rightharpoonup C(s)x$ in $H$ for every $x\in H$, i.e.
\beq\label{c(s)}
\lim_{\eps\to 0^+} \langle Q(T-\eps)^{1 / 2}\Phi(T-\eps)x,y\rangle_H = \langle C(s)x,y\rangle_H \qquad \forall x\in H, \quad \forall y\in \overline{R(P_T^{1/2})}.
\eeq
\end{lemma}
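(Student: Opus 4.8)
The plan is to exploit that, although $\Phi(T-\eps,s)x$ may be unbounded as $\eps\to0^+$, the vector
$$v_\eps(x):=Q(T-\eps)^{1/2}\Phi(T-\eps,s)x$$
stays bounded in $H$, and to read off the convergence of its scalar products against $\overline{R(P_T^{1/2})}$ directly from the closed‑loop form \eqref{Riccint1} of the Riccati equation. First I would record the uniform bound: taking $y=x$ in \eqref{Riccint2x} and discarding the two non‑negative integrals gives $\|v_\eps(x)\|_H^2\le\langle Q(s)x,x\rangle_H$ for every small $\eps>0$ (indeed, as observed after \eqref{Riccint2x}, $\eps\mapsto\|v_\eps(x)\|_H^2$ decreases to $\langle B(s)x,x\rangle_H$ defined in \eqref{b(s)}), so $\{v_\eps(x)\}_\eps$ is bounded.

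Next I would extract the limit of the relevant pairings. Splitting $Q(T-\eps)=Q(T-\eps)^{1/2}Q(T-\eps)^{1/2}$ in the first term of \eqref{Riccint1}, that identity becomes, for all $x,y\in H$,
$$\langle v_\eps(x),\,Q(T-\eps)^{1/2}U(T-\eps,s)y\rangle_H=\langle Q(s)x,y\rangle_H-\int_s^{T-\eps}\langle M(r)\Phi(r,s)x,U(r,s)y\rangle_H\,dr.$$
By \eqref{Riccint3} one has $M(\cdot)^{1/2}\Phi(\cdot,s)x\in L^2(s,T;H)$, while $M(\cdot)^{1/2}U(\cdot,s)y$ is bounded, so by Cauchy–Schwarz the integrand lies in $L^1(s,T)$ and the right‑hand side converges as $\eps\to0^+$; hence so does the left‑hand side. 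I would then use the elementary fact that $Q(T-\eps)\to P_T$ strongly (definition of ${\cal Q}$), together with the uniform bound $0\le Q(\cdot)\le KI$, implies $Q(T-\eps)^{1/2}\to P_T^{1/2}$ strongly (polynomial approximation of $\sqrt{\cdot}$ on $[0,K]$); combined with the strong continuity of $U(\cdot,s)$ up to $T$, this gives $Q(T-\eps)^{1/2}U(T-\eps,s)y\to P_T^{1/2}U(T,s)y$ in $H$. Since $\{v_\eps(x)\}$ is bounded, replacing $Q(T-\eps)^{1/2}U(T-\eps,s)y$ by its limit in the pairing changes it by $o(1)$, so $\langle v_\eps(x),\,P_T^{1/2}U(T,s)y\rangle_H$ converges as $\eps\to0^+$, for every $y\in H$.

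Finally I would pass from the subspace $\{P_T^{1/2}U(T,s)y:y\in H\}$ to all of $\overline{R(P_T^{1/2})}$. Since $U(T,s)$ has dense range in $H$ (a standard property of the parabolic evolution operators considered here) and $P_T^{1/2}$ is bounded, that subspace is dense in $\overline{R(P_T^{1/2})}$; together with the uniform bound on $\{v_\eps(x)\}$, a routine $\eps/3$‑argument yields convergence of $\langle v_\eps(x),z\rangle_H$ as $\eps\to0^+$ for every $z\in\overline{R(P_T^{1/2})}$. The limit is a bounded (conjugate‑)linear functional of $z$, bounded by $\langle Q(s)x,x\rangle_H^{1/2}\,\|z\|_H$, hence by Riesz representation it equals $\langle C(s)x,z\rangle_H$ for a unique $C(s)x\in\overline{R(P_T^{1/2})}$; this is exactly \eqref{c(s)} and gives $\Pi v_\eps(x)\rightharpoonup C(s)x$ in $H$ (for general $z\in H$ write $z=\Pi z+(I-\Pi)z$). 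Linearity of $x\mapsto C(s)x$ is inherited from $v_\eps$, and weak lower semicontinuity of the norm gives $\|C(s)x\|_H\le\liminf_\eps\|v_\eps(x)\|_H\le\langle Q(s)x,x\rangle_H^{1/2}$, so $C(s)\in{\cal L}(H)$.

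The main obstacle — and essentially the only delicate point — is the second step: one must avoid trying to pass to the limit inside $\Phi(T-\eps,s)x$, which need not converge, and instead notice that \eqref{Riccint1} already packages the convergent pairing $\langle v_\eps(x),Q(T-\eps)^{1/2}U(T-\eps,s)y\rangle_H$, after which the strong convergence $Q(T-\eps)^{1/2}\to P_T^{1/2}$ does the rest; the density of $R(U(T,s))$ needed at the end is standard but should be stated explicitly.
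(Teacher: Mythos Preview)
Your proposal follows essentially the same route as the paper up to the point where you have convergence of $\langle v_\eps(x),P_T^{1/2}U(T,s)y\rangle_H$ for all $y\in H$; the difference is in how you then reach all of $\overline{R(P_T^{1/2})}$. You invoke density of $R(U(T,s))$ in $H$, calling it ``a standard property of the parabolic evolution operators considered here''. This is the weak point: density of $R(U(T,s))$ is equivalent to injectivity of $U(T,s)^*$, i.e.\ a backward uniqueness statement for the adjoint evolution, and under the very general nonautonomous Hypotheses~\ref{hpmain1}--\ref{hpmain4} this is neither assumed nor obviously available. The paper deliberately avoids that issue.

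What the paper does instead is worth noting: it applies \eqref{Riccint1} not at the fixed base point $s$ but at $T-\vartheta$, with $x$ replaced by $\Phi(T-\vartheta,s)x$ (so that, via \eqref{transit}, the same $v_\eps(x)$ appears). This gives convergence of $\langle v_\eps(x),Q(T-\eps)^{1/2}U(T-\eps,T-\vartheta)y\rangle_H$ for every $\vartheta>0$ and every $y\in H$. After extracting a weak subsequential limit of $v_\eps(x)$ and passing $\eps\to0^+$, one then lets $\vartheta\to0^+$: since $U(T,T-\vartheta)\to I$ strongly, the vectors $P_T^{1/2}U(T,T-\vartheta)y$ sweep out a dense subset of $R(P_T^{1/2})$ \emph{without} any hypothesis on $R(U(T,s))$. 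The rest (uniqueness of the limit, linearity in $x$, boundedness, projection by $\Pi$) is as you wrote. So your outline is sound if you can justify the density claim; the paper's two-parameter $(\eps,\vartheta)$ argument buys exactly the freedom from that claim.
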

\begin{proof} We start from \eqref{Riccint1}. Taking into account Lemma \ref{Mu_Mphi}, we get
\beyy
\lefteqn{\exists \lim_{\eps \to 0^+} \langle Q(T-\eps)^{1 / 2} \Phi(T-\eps,s)x,Q(T-\eps)^{1 / 2}U(T-\eps,s)y\rangle_H }\\
& & \qquad \ \ = \langle Q(s)x,y\rangle_H - \int_s^T \langle M(r)\Phi(r,s)x,U(r,s)y\rangle_H \,dr.
\eeyy
Replacing $s$ by $T-\vartheta$, with $0<\eps<\vartheta<T-s$, and $x$ by $\Phi(T-\vartheta,s)x$, we obtain by \eqref{transit}
\beq\label{conto_per_c(s)}\begin{array}{l}
\displaystyle \exists \lim_{\eps \to 0^+} \langle Q(T-\eps)^{1 / 2}\Phi(T-\eps,s)x,Q(T-\eps)^{1 / 2}U(T-\eps,T-\vartheta)y\rangle_H \\[2mm]
\displaystyle = \langle Q(T-\vartheta)\Phi(T-\vartheta,s)x,y\rangle_H - \int_{T-\vartheta}^T \langle M(r)\Phi(r,s)x,U(r,T-\vartheta)y\rangle_H \,dr.\end{array}\eeq
Now note that, by \eqref{b(s)} and \eqref{Riccint3},  
\beq\label{bound} \|Q(T-\eps)^{1 / 2}\Phi(T-\eps,s)\|_{{\cal L}(H)}\le K \qquad \forall x\in H,\quad \forall s\in [0,T[\,, \quad \forall \eps \in \,]0,T-s];
\eeq
hence, for each $x\in H$ and $s\in [0,T[\,$ we can find a sequence $\sigma=\{\sigma_k\}_{k\in \N}$, decreasing monotonically to $0$, and an element $v(s,x,\sigma) \in H$ such that
\beq\label{lim_prel}
\langle v(s,x,\sigma),y\rangle_H = \lim_{k\to \infty} \langle Q(T-\sigma_k)^{1 / 2} \Phi(t-\sigma_k,s)x, y\rangle_H \qquad \forall y\in H.
\eeq
Going back to \eqref{conto_per_c(s)},  using the strong convergence of $Q(\cdot)^{1 / 2}U(\cdot,T-\vartheta)$ to $P_T^{1/2}U(T,T-\vartheta)$, we get for every $y\in H$  
\beyy
\lefteqn{\langle v(s,x,\sigma),P_T^{1/2}U(T,T-\vartheta)y\rangle_H }\\[2mm]  
& & =\lim_{k\to \infty} \langle Q(T-\sigma_k)^{1 / 2} \Phi(t-\sigma_k)x, Q(T-\sigma_k)^{1 / 2} U(T-\sigma_k,T-\vartheta)y \rangle_H\\[1mm]
& & = \lim_{\eps \to 0^+} \langle Q(T-\eps)^{1 / 2}\Phi(T-\eps,s)x,Q(T-\eps)^{1 / 2}U(T-\eps,T-\vartheta)y\rangle_H \\[-1mm]
& & = \langle Q(T-\vartheta)\Phi(T-\vartheta,s)x,y\rangle_H - \int_{T-\vartheta}^T \langle M(r)\Phi(r,s)x,U(r,T-\vartheta)y\rangle_H \,dr.
\eeyy
As a consequence, since the integral term, by Lemma \ref{Mu_Mphi}, goes to $0$ as $\vartheta\to 0^+$,
$$\exists \lim_{\vartheta\to 0^+} \langle Q(T-\vartheta)^{1 / 2} \Phi(T-\vartheta,s)x,Q(T-\vartheta)^{1 / 2}y\rangle_H = \langle v(s,x,\sigma),P_T^{1/2}y\rangle_H\qquad \forall y\in H,$$
and also, using \eqref{bound} and strong convergence of $Q(\cdot)^{1 / 2 }$ to $P_T^{1/2}$,
$$\exists \lim_{\vartheta\to 0^+} \langle Q(T-\vartheta)^{1 / 2} \Phi(T-\vartheta,s)x,P_T^{1/2}y\rangle_H = \langle v(s,x,\sigma),P_T^{1/2}y\rangle_H\qquad\forall y\in H.$$
By density, recalling again \eqref{bound}, we obtain 
$$\exists \lim_{\vartheta\to 0^+} \langle Q(T-\vartheta)^{1 / 2} \Phi(T-\vartheta,s)x,z\rangle_H = \langle v(s,x,\sigma),z\rangle_H\qquad \forall z\in \overline{R(P_T^{1/2})}.$$
As the limit in the left-hand side is independent of the sequence $\sigma$ and is linear with respect to $x$, we deduce that there is a bounded (by \eqref{bound}), linear operator $\Gamma(s):H\to H$ such that 
$$\langle \Gamma(s)x,z\rangle_H = \langle v(s,x,\sigma),z\rangle_H =\lim_{\vartheta\to 0^+} \langle Q(T-\vartheta)^{1 / 2} \Phi(T-\vartheta,s)x,z\rangle_H \quad \forall z\in \overline{R(P_T^{1/2})}.$$
Finally, setting $C(s)x=\Pi\Gamma(s)x$ for every $x\in H$, 
it is clear that $\langle C(s)x,z\rangle_H = \langle \Gamma(s)x,z\rangle_H$ for every $z\in \overline{R(P_T^{1/2})}$, so that
$$\langle C(s)x,z\rangle_H =\langle v(s,x,\sigma),z\rangle_H =\lim_{\vartheta\to 0^+} \langle Q(T-\vartheta)^{1 / 2} \Phi(T-\vartheta,s)x,z\rangle_H \quad \forall z\in \overline{R(P_T^{1/2})}.$$
This proves the result. 
\end{proof} 
We now prove two basic lemmas relative to the behaviour of $C(\cdot)$ near $T$.
\begin{lemma}\label{range_limit} Let $C(s)$ be defined by  \eqref{c(s)}. Then:
\begin{description}
\item[(i)] \ $\displaystyle \lim_{t\to T^-} \|C(t)x-P_T^{1/2}x\|_H =0$ for every $x\in H$; 
\item[(ii)] \ $\displaystyle \overline{R(P_T^{1/2}})=\overline{\bigcup_{0\le t<T} R(C(t))}$. 
\end{description}
\end{lemma}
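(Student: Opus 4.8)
The plan is to prove the two assertions in order, exploiting the defining property \eqref{c(s)} of $C(s)$ together with the strong continuity of $Q(\cdot)$ at $T$ (part of the definition of the class ${\cal Q}$) and the uniform bound \eqref{bound}. For \textbf{(i)}, fix $x\in H$ and $z\in\overline{R(P_T^{1/2})}$. By \eqref{c(s)} and \eqref{transit} we have, for $s<t<T$,
$$\langle C(t)x,z\rangle_H=\lim_{\eps\to 0^+}\langle Q(T-\eps)^{1/2}\Phi(T-\eps,t)x,z\rangle_H .$$
I would write $\Phi(T-\eps,t)x=U(T-\eps,t)x+[L_t\bar u(\cdot;t,x)](T-\eps)$ via \eqref{statebar}. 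The first piece gives, after inserting $Q(T-\eps)^{1/2}$ and using strong convergence $Q(\cdot)^{1/2}\to P_T^{1/2}$ together with strong continuity of $U(T-\eps,t)\to I$ as $\eps,\,(T-t)\to0$, a contribution converging to $\langle P_T^{1/2}x,z\rangle_H$. The second piece, $Q(T-\eps)^{1/2}[L_t\bar u(\cdot;t,x)](T-\eps)$, is controlled in norm by $K$ times $\|[L_t\bar u(\cdot;t,x)](T-\eps)\|_H$; using \cite[Lemma 4.4(i)]{AT1} as in the proof of Lemma \ref{Mu_Mphi} (H\"older in $L^{\frac{2}{1-2\alpha}}$ on the short interval $[t,T]$), this is $O((T-t)^{\alpha}\|x\|_H)$, hence $\to 0$ as $t\to T^-$, uniformly in $\eps$. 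Passing to the limit $\eps\to0^+$ and then $t\to T^-$ yields $\langle C(t)x,z\rangle_H\to\langle P_T^{1/2}x,z\rangle_H$ for all $z\in\overline{R(P_T^{1/2})}$; since $R(C(t))\subseteq\overline{R(P_T^{1/2})}$ and $P_T^{1/2}x=\Pi P_T^{1/2}x\in\overline{R(P_T^{1/2})}$, this is weak convergence in a Hilbert space. To upgrade to norm convergence, I would prove $\limsup_{t\to T^-}\|C(t)x\|_H\le\|P_T^{1/2}x\|_H$: this follows because $\|C(s)x\|_H=\|\Pi\Gamma(s)x\|_H\le\liminf_\eps\|Q(T-\eps)^{1/2}\Phi(T-\eps,s)x\|_H$, and by \eqref{Riccint3} (read with $s$ replaced by $t$, using $\langle B(t)x,x\rangle_H=\lim_\eps\|Q(T-\eps)^{1/2}\Phi(T-\eps,t)x\|_H^2\le\langle Q(t)x,x\rangle_H$) one gets $\|C(t)x\|_H^2\le\langle B(t)x,x\rangle_H\le\langle Q(t)x,x\rangle_H\to\langle P_Tx,x\rangle_H=\|P_T^{1/2}x\|_H^2$. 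Weak convergence plus convergence of norms gives norm convergence, which is \textbf{(i)}.

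For \textbf{(ii)}, the inclusion $\overline{\bigcup_{0\le t<T}R(C(t))}\subseteq\overline{R(P_T^{1/2})}$ is immediate since each $R(C(t))\subseteq\overline{R(P_T^{1/2})}$ by Lemma \ref{C(s)}. For the reverse inclusion it suffices to show $R(P_T^{1/2})\subseteq\overline{\bigcup_t R(C(t))}$, i.e. that every $P_T^{1/2}x$ is a limit of elements $C(t)x$; but this is exactly part \textbf{(i)}: $C(t)x\to P_T^{1/2}x$ as $t\to T^-$, and each $C(t)x\in\bigcup_t R(C(t))$. Taking closures of both sides finishes the equality.

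The main obstacle is the upgrade from weak to norm convergence in \textbf{(i)}, or more precisely establishing the $\limsup$ inequality for $\|C(t)x\|_H$: one must be careful that $B(\cdot)$, defined through the decreasing limit in \eqref{b(s)}, indeed satisfies $B(t)\le Q(t)$ (clear from \eqref{Riccint3}, since the two integral terms there are non-negative) and that $\|C(t)x\|_H^2\le\langle B(t)x,x\rangle_H$ (which needs $C(t)x=\Pi\Gamma(t)x$ and lower semicontinuity of the norm under weak limits along the subsequence $\sigma$ used in Lemma \ref{C(s)}). The short-interval estimate on $Q(T-\eps)^{1/2}[L_t\bar u(\cdot;t,x)](T-\eps)$, needed for the weak convergence, is routine given Lemma \ref{Mu_Mphi} and \cite[Lemma 4.4(i)]{AT1}, but one should check the constants are independent of both $\eps$ and $t$ so that the iterated limit is legitimate.
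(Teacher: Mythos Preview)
Your proof of \textbf{(ii)} is correct and essentially the same as the paper's argument.

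Your proof of \textbf{(i)} contains a genuine gap in the weak-convergence step. You claim that $\|[L_t\bar u(\cdot;t,x)](T-\eps)\|_H=O((T-t)^{\alpha}\|x\|_H)$, uniformly in $\eps$, ``using \cite[Lemma 4.4(i)]{AT1} as in the proof of Lemma \ref{Mu_Mphi}''. But Lemma \ref{Mu_Mphi} is an \emph{integral} estimate (H\"older against the $L^{\frac2{1-2\alpha}}$-norm of $\bar y$ over a short interval), not a pointwise one, and no such pointwise bound is available here. Concretely,
\[
\|[L_t\bar u](T-\eps)\|_H\le c\int_t^{T-\eps}(T-\eps-r)^{\alpha-1}\|\bar u(r)\|_U\,dr,
\]
and since $\alpha<\frac12$ the kernel $(T-\eps-r)^{\alpha-1}$ lies only in $L^p(t,T-\eps)$ for $p<\frac1{1-\alpha}<2$, so Cauchy--Schwarz against $\bar u\in L^2$ fails. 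Equivalently, $[L_t\bar u](T-\eps)=\Phi(T-\eps,t)x-U(T-\eps,t)x$, and the paper explicitly notes that $\bar y(\cdot;t,x)=\Phi(\cdot,t)x$ may be unbounded near $T$; only $\bar y\in L^{\frac2{1-2\alpha}}$ is guaranteed. Thus the second piece cannot be dismissed uniformly in $\eps$ by the argument you sketch.

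Your step 2 (the $\limsup$ inequality $\|C(t)x\|_H^2\le\langle B(t)x,x\rangle_H\le\langle Q(t)x,x\rangle_H\to\|P_T^{1/2}x\|_H^2$) and the weak-plus-norms upgrade are both fine; what is missing is a valid proof of weak convergence. The paper circumvents the pointwise difficulty entirely: it forms the combination \eqref{a+c-2b} of the three equivalent Riccati identities \eqref{Riccintw2}, \eqref{Riccint1}, \eqref{Riccint2}, which expresses
\[
\|Q(T-\delta)^{1/2}\Phi(T-\delta,T-\eps)x-Q(T-\delta)^{1/2}U(T-\delta,T-\eps)x\|_H^2
\]
as a sum of \emph{integral} terms. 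Letting $\delta\to0^+$, weak lower semicontinuity (together with strong convergence of $Q(T-\delta)^{1/2}U(T-\delta,T-\eps)x$) gives $\|C(T-\eps)x-P_T^{1/2}U(T,T-\eps)x\|_H^2$ bounded by those integrals, which vanish as $\eps\to0^+$ by Lemmas \ref{Mu_Mphi}, \ref{stimaU}, \ref{stimaPhi}. This yields strong convergence directly, without splitting into weak convergence plus a norm bound.
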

\begin{proof} In oreder to prove (i), as in the proof of Lemma \ref{stimaPhi}, we arrive to \eqref{a+c-2b}. With $x=y$, $\delta$ in place of $\eps$ and $T-\eps$ in place of $s$ (with $\eps>\delta$), we obtain
$$\begin{array}{l}
\displaystyle 0=\|Q(T-\delta)^{1/2}\Phi(T-\delta,T-\eps)x - Q(T-\delta)^{1/2}U(T-\delta,T-\eps)x\|_H^2\\[2mm]
\displaystyle \qquad+ \int_{T-\eps}^{T-\delta} \|M(r)^{1/2} [\Phi(r,T-\eps)x-U(r,T-\eps)x]\|_H^2\,dr\\[3mm]
\displaystyle \qquad+\int_{T-\eps}^{T-\delta} \left[\|N(r)^{-1/2}G(r)^*A(r)^+Q(r)\Phi(r,T-\eps)x\|^2_U\right.\\[2mm]
\displaystyle \qquad\qquad \left.-\|N(r)^{-1/2}G(r)^*A(r)^+Q(r)U(r,T-\eps)x\|^2_U\right]\,dr,
\end{array}$$  
Letting $\delta \to 0^+$, we see that
$$\begin{array}{l}
\displaystyle \exists \lim_{\delta \to 0^+}\|Q(T-\delta)^{1/2}\Phi(T-\delta,T-\eps)x - Q(T-\delta)^{1/2}U(T-\delta,T-\eps)x\|_H^2\\[2mm]
\displaystyle \qquad = - \int_{T-\eps}^T \|M(r)^{1/2} [\Phi(r,T-\eps)x-U(r,T-\eps)x]\|_H^2\,dr\\[3mm]
\displaystyle \qquad \quad-\int_{T-\eps}^T \left[\|N(r)^{-1/2}G(r)^*A(r)^+Q(r)\Phi(r,T-\eps)x\|^2_U\right.\\[3mm]
\displaystyle \qquad\quad \quad\left.-\|N(r)^{-1/2}G(r)^*A(r)^+Q(r)U(r,T-\eps)x\|^2_U\right]\,dr.
\end{array}$$  
By \eqref{c(s)} we get
$$\begin{array}{l}
\displaystyle \|C(T-\eps)x-P_T^{1/2}U(T,T-\eps)x\|_H^2 \le -\int_{T-\eps}^T \|M(r)^{1/2} [\Phi(r,T-\eps)x-U(r,T-\eps)x]\|_H^2\,dr \\[2mm]
\displaystyle \qquad-\int_{T-\eps}^T \left[\|N(r)^{-1/2}G(r)^*A(r)^+Q(r)\Phi(r,T-\eps)x\|^2_U\right.\\[2mm]
\displaystyle \qquad\qquad \left.-\|N(r)^{-1/2}G(r)^*A(r)^+Q(r)U(r,T-\eps)x\|^2_U\right]\,dr.
\end{array}$$  
Finally, we let $\eps \to 0^+$: by Lemmas \ref{Mu_Mphi}, \ref{stimaPhi} and \ref{stimaU} we educe that
$$\lim_{\eps\to 0^+} \|C(T-\eps)x-P_T^{1/2}U(T,T-\eps)x\|_H^2=0;$$
as $P_T^{1/2}U(T,T-\eps)x \to P_T^{1/2}x$ in $H$, we conclude that $C(T-\eps)x \to P_T^{1/2}x$, thus proving (i).\\[1mm]
Concerning (ii), the inclusion $\supseteq$ is easy, since Lemma \ref{C(s)} implies that
$$\bigcup_{0\le t<T} R(C(t)) \subseteq \overline{R(P_T^{1/2})}.$$
To prove the reverse inclusion, fix $z\in \overline{R(P_T^{1/2})}$ and $\eps>0$: then there exists $x\in H$ such that $\|P_T^{1/2}x-z\|_H <\eps$. By (i), $C(t)x \to P_T^{1/2}x$ as $t\to T^-$, so that there is $t_0\in [0,T[\,$ such that 
$$\|C(t)x-P_T^{1/2}x\|_H < \eps \qquad \forall t\in [t_0,T[\,.$$
This shows that
$$\overline{R(P_T^{1/2})} \subseteq \overline{ \bigcup_{0\le t_0<T}\ \bigcap_{t_o\le t<T} R(C(t))}.$$
However, if $t_0\le t<t'<T$ it holds, by \eqref{transit}, 
$$C(t)x =\textrm{w-}\hspace{-1mm}\lim_{\eps\to 0^+}Q(T-\eps^{1/2}\Phi(T-\eps,t)x = \textrm{w-}\hspace{-1mm}\lim_{\eps\to 0^+}Q(T-\eps^{1/2}\Phi(T-\eps,t')\Phi(t',t)x = C(t')\phi(t',t)x,$$ 
so that $R(C(t))\subseteq R(C(t'))$. As a consequence,
$$\overline{R(P_T^{1/2})} \subseteq \overline{ \bigcup_{0\le t<T} R(C(t))}.$$
\end{proof}
\begin{lemma}\label{legameBC} Let $B(s)$ and $C(s)$ be defined by \eqref{b(s)xy} and \eqref{c(s)}. Then for every $x,y\in H$, $s\in [0,T[\,$ and $t\in [s,T[\,$ we have 
\beq\label{lim_key}
\lim_{\eps\to 0^+} \left|\langle C(t)x,P_T^{1/2} U(T,T-\eps)\Phi(T-\eps,s)y\rangle_H-\langle B(t)x,\Phi(t,s)y\rangle_H\right|=0.
\eeq
\end{lemma}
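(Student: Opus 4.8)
The plan is to pass to the limit $\eps\to0^+$ in an identity for $C(\cdot)$ that is already contained in the proof of Lemma~\ref{C(s)}. That proof establishes, for every $s\in[0,T[\,$, $x,y\in H$ and $\vartheta\in\,]0,T-s[\,$, the formula
\[
\langle v(s,x,\sigma),P_T^{1/2}U(T,T-\vartheta)y\rangle_H=\langle Q(T-\vartheta)\Phi(T-\vartheta,s)x,y\rangle_H-\int_{T-\vartheta}^T\langle M(r)\Phi(r,s)x,U(r,T-\vartheta)y\rangle_H\,dr,
\]
together with $\langle C(s)x,z\rangle_H=\langle v(s,x,\sigma),z\rangle_H$ for all $z\in\overline{R(P_T^{1/2})}$; since $P_T^{1/2}U(T,T-\vartheta)y\in R(P_T^{1/2})\subseteq\overline{R(P_T^{1/2})}$, these combine, after renaming $s$ as $t$ and $y$ as $z$, into
\[
\langle C(t)x,P_T^{1/2}U(T,T-\vartheta)z\rangle_H=\langle Q(T-\vartheta)\Phi(T-\vartheta,t)x,z\rangle_H-\int_{T-\vartheta}^T\langle M(r)\Phi(r,t)x,U(r,T-\vartheta)z\rangle_H\,dr,
\]
valid for all $t\in[0,T[\,$, $x,z\in H$ and $\vartheta\in\,]0,T-t[\,$.

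I would then apply this with $\vartheta=\eps<T-t$ and with $z=\Phi(T-\eps,s)y$, invoking \eqref{transit} in the form $\Phi(T-\eps,s)y=\Phi(T-\eps,t)\Phi(t,s)y$ (valid since $s\le t\le T-\eps$). Writing $w:=\Phi(t,s)y$, this gives
\[
\langle C(t)x,P_T^{1/2}U(T,T-\eps)\Phi(T-\eps,s)y\rangle_H=\langle Q(T-\eps)\Phi(T-\eps,t)x,\Phi(T-\eps,t)w\rangle_H-\int_{T-\eps}^T\langle M(r)\Phi(r,t)x,U(r,T-\eps)\Phi(T-\eps,s)y\rangle_H\,dr.
\]
As $\eps\to0^+$ the first term on the right converges to $\langle B(t)x,w\rangle_H=\langle B(t)x,\Phi(t,s)y\rangle_H$ by the definition \eqref{b(s)xy} of $B(t)$, so \eqref{lim_key} reduces to showing that the integral term vanishes in the limit.

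For that I would use \eqref{y_eps} (applied with initial time $t$ and initial point $w$) to write $U(r,T-\eps)\Phi(T-\eps,s)y=U(r,T-\eps)\Phi(T-\eps,t)w=y_\eps(r;t,w)$ for $r\in[T-\eps,T]$, note that $\Phi(r,t)x=\bar y(r;t,x)$ lies in $L^{\frac2{1-2\alpha}}(t,T;H)$ by \eqref{statebar} and \cite[Lemma 4.4(i)]{AT1}, and that $\{y_\eps(\cdot;t,w)\}_\eps$ is bounded in $L^{\frac2{1-2\alpha}}(t,T;H)$ by \eqref{regPhi}. Since $M(\cdot)$ is bounded, Hölder's inequality on the interval $[T-\eps,T]$ of length $\eps$, with exponents $\tfrac2{1-2\alpha},\tfrac2{1-2\alpha},\tfrac1{2\alpha}$, bounds the modulus of the integral by $c\,\eps^{2\alpha}\,\|\bar y(\cdot;t,x)\|_{L^{2/(1-2\alpha)}(t,T;H)}\sup_\eps\|y_\eps(\cdot;t,w)\|_{L^{2/(1-2\alpha)}(t,T;H)}$, which tends to $0$. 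This yields \eqref{lim_key}. The only point requiring care is the bookkeeping — transcribing the $C(\cdot)$-identity above with second variable $t$, keeping $\eps<T-t$ so that \eqref{transit} applies, and reading off the correct $y_\eps$ and $\bar y$ from \eqref{y_eps}--\eqref{regPhi}; all the substantive analytic work (the weak-limit constructions of $B(\cdot)$ and $C(\cdot)$ and the identity above) has been done already, so no real obstacle arises beyond this elementary tail estimate.
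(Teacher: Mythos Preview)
Your argument is correct and follows essentially the same route as the paper: both establish the identity
\[
\langle C(t)x,P_T^{1/2}U(T,T-\eps)\Phi(T-\eps,s)y\rangle_H-\langle Q(T-\eps)\Phi(T-\eps,t)x,\Phi(T-\eps,s)y\rangle_H=-\int_{T-\eps}^T\langle M(r)\Phi(r,t)x,y_\eps(r;t,w)\rangle_H\,dr
\]
(the paper's \eqref{base_cu}) and kill the integral with the same H\"older estimate. The only difference is organizational: you quote the identity directly from the proof of Lemma~\ref{C(s)} and invoke \eqref{b(s)xy} for the remaining limit, whereas the paper re-derives \eqref{base_cu} from \eqref{Riccint1} and obtains the $B$-limit with an explicit rate via \eqref{Riccint2}--\eqref{Riccint3bis}; your version is slightly more economical, the paper's gives quantitative bounds, but the substance is identical.
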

\begin{proof} 
We fix $s\in [0,T[\,$, $t\in[s,T[\,$ and $x,y\in H$. Let us analyze the quantity to be estimated. To begin with, we have for $\eps\in \,]0,T-t[\,$
\beq\label{stima1}
\begin{array}{l}
\left|\langle C(t)x,P_T^{1/2} U(T,T-\eps)\Phi(T-\eps,s)y\rangle_H-\langle B(t)x,\Phi(t,s)y\rangle_H\right|\\[2mm]
\displaystyle\le \Big|\langle C(t)x,P_T^{1/2} U(T,T\hspace{-0.5mm}-\hspace{-0.5mm}\eps)\Phi(T\hspace{-0.5mm}-\hspace{-0.5mm}\eps,s)y\rangle_H\hspace{-0.5mm}-\hspace{-0.5mm} \langle Q(T\hspace{-0.5mm}-\hspace{-0.5mm}\eps)\Phi(T\hspace{-0.5mm}-\hspace{-0.5mm}\eps,t)x,\Phi(T\hspace{-0.5mm}-\hspace{-0.5mm}\eps,s)y\rangle_H \Big| \\[2mm]
\displaystyle \quad \quad \quad + \Big|\langle Q(T-\eps)\Phi(T-\eps,t)x,\Phi(T-\eps,s)y\rangle_H-\langle B(t)x,\Phi(t,s)y\rangle_H\Big|.
\end{array}
\eeq
In order to estimate the first term on the right-hand side of \eqref{stima1}, we start from equation \eqref{Riccint1}: we first replace there $\eps$ by $\delta$ and then choose $T-\eps$ (with $0<\delta<\eps$) in place of $s$, $\Phi(T-\eps,t)x$ in place of $x$ and $\Phi(T-\eps,s)y$ in place of $y$. Then we find, using \eqref{transit}, 
\beq\label{stima2}
\begin{array}{l} \langle Q(T-\eps)\Phi(T-\eps,t)x,\Phi(T-\eps,s)y\rangle_H\\[2mm]
\displaystyle \qquad =\langle Q(T-\delta)^{1 / 2}\Phi(T-\delta,t)x,Q(T-\delta)^{1 / 2}U(T-\delta,T-\eps)\Phi(T-\eps,s)y\rangle_H\\[1mm]
\displaystyle \qquad \quad+\int_{T-\eps}^{T-\delta} \langle M(r)\Phi(r,t)x,U(r,T-\eps)\Phi(T-\eps,s)y\rangle_H\,dr.
\end{array}
\eeq
As $\delta\to 0^+$, by strong convergence of $Q(T-\delta)^{1/2} U(T-\delta,T-\eps)y$ to $P_T^{1/2}U(T,T-\eps)y$ and by \eqref{bound}, \eqref{c(s)}, we deduce  
\beq \label{base_cu}\begin{array}{l} 
\hspace{-5mm}\displaystyle \left|\langle Q(T-\eps)\Phi(T-\eps,t)x,\Phi(T-\eps,s)y\rangle_H -\langle C(t)x,P_T^{1/2}U(T,T-\eps)\Phi(T-\eps,s)y\rangle_H\right| \\[2mm]
\displaystyle \qquad=\left|\int_{T-\eps}^{T} \langle M(r)\Phi(r,t)x,U(r,T-\eps)\Phi(T-\eps,s)y\rangle_H\,dr\right|\\[3mm]
\displaystyle \qquad= \left|\int_{T-\eps}^{T} \langle M(r)\Phi(r,t)x,y_\eps(r;s,y)\rangle_H\,dr \right| \\[4mm]
\displaystyle\qquad \le K_0 \|\bar{y}(\cdot;t,x)\|_{L^2(T-\eps,T;H)}\|y_\eps(\cdot;s,y)\|_{L^2(s,T;H)}\end{array} 
\eeq
where $K_0$ is a constant; since $y_\eps(\cdot;s,y)$ is bounded in $L^2(s,T;H)$ by a constant $K_y$\,, we proceed as in the proof of Lemma \ref{Mu_Mphi}, and we deduce
\beq\label{base_cu2}
\begin{array}{l}
\hspace{-5mm}\displaystyle \left|\int_{T-\eps}^{T} \langle M(r)\Phi(r,t)x,U(r,T-\eps)\Phi(T-\eps,s)y\rangle_H\,dr\right| \le \\[4mm]
\displaystyle \hspace{-5mm}\qquad \qquad \le  K_y\,\eps^\alpha\|\bar{y}(\cdot;t,x)\|_{L^{\frac2{1-2\alpha}}(T-\eps,T;H)}\le  CK_y\eps^{\alpha}\|x\|_H.\end{array}
\eeq 
The second term in the right-hand side of \eqref{stima1} can be estimated by subtracting \eqref{Riccint3bis} from \eqref{Riccint2}, having replaced in both equations $s$ by $t$ and $y$ by $\Phi(t,s)y$: indeed, we have, using \eqref{transit}, \eqref{controlbar} and \eqref{transitu}, 
\beyy
0 & = & \langle Q(T-\eps)\Phi(T-\eps,t)x,\Phi(T-\eps,s)y\rangle_H-\langle B(t)x,\Phi(t,s)y\rangle_H - \\
& - & \int_{T-\eps}^T \langle M(r)\Phi(r,t)x,\Phi(r,s)y\rangle_H \,dr - \int_{T-\eps}^T \langle N(r)\bar{u}(r;t,x),\bar{u}(r;s,y)\rangle_U\,dr.
\eeyy
Hence, proceeding as before, we get
\beq\label{stima3} 
\begin{array}{l}\big|\langle Q(T-\eps)\Phi(T-\eps,t)x,\Phi(T-\eps,s)y\rangle_H-\langle B(t)x,\Phi(t,s)y\rangle_H\big|\le\\[2mm]
\displaystyle \qquad\le K\eps^{2\alpha} \|\bar{y}(\cdot;t,x)\|_{L^{\frac1{1-2\alpha}}(T-\eps,T;H)}\|\bar{y}(\cdot;s,y)\|_{L^{\frac1{1-2\alpha}}(T-\eps,T;H)}+ \\[3mm]
\displaystyle \qquad +K\|\bar{u}(\cdot;t,x)\|_{L^2(T-\eps,T;U)}\|\bar{u}(\cdot;s,y)\|_{L^2(T-\eps,T;U)} \le K\|x\|_H\left(\eps^{2\alpha}\|y\|_H+\omega_y(\eps)\right),
\end{array}
\eeq
where $K$ is an absolute constant and $\omega_y(\eps)$ decreases monotonically to $0$ as $\eps\to 0^+$. By \eqref{stima1}, \eqref{base_cu} and \eqref{stima3} we obtain the desired conclusion.
\end{proof}
The following statement is an important variant of the preceding lemma.
\begin{lemma}\label{legameBCeps} Let $C(s)$ be defined by \eqref{c(s)}. Then for every $s\in [0,T]\,$ we have 
$$\lim_{\eps \to 0^+} \langle C(T-\eps)x,P_T^{1/2} U(T,T-\eps)\Phi(T-\eps,s)y\rangle_H =\langle P_T^{1/2}x,C(s)y \rangle_H \qquad \forall x,y \in H.$$
\end{lemma}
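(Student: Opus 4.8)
The plan is to run the argument of the proof of Lemma \ref{legameBC} with the special choice $t=T-\eps$: this collapses the factor $\Phi(T-\eps,t)$ to the identity, which is precisely what turns the statement of that lemma into the present one. Concretely, I would start from equation \eqref{Riccint1}, replace there $\eps$ by $\delta$ with $0<\delta<\eps<T-s$, and then take $T-\eps$ in place of the initial time, $x$ in place of the first vector and $\Phi(T-\eps,s)y$ in place of the second vector. This yields, for all $x,y\in H$ and $0<\delta<\eps<T-s$,
\begin{equation}\label{auxiden}
\begin{array}{l}
\langle Q(T-\eps)x,\Phi(T-\eps,s)y\rangle_H \\[2mm]
\displaystyle\quad = \langle Q(T-\delta)^{1/2}\Phi(T-\delta,T-\eps)x,\,Q(T-\delta)^{1/2}U(T-\delta,T-\eps)\Phi(T-\eps,s)y\rangle_H \\[2mm]
\displaystyle\qquad +\int_{T-\eps}^{T-\delta}\langle M(r)\Phi(r,T-\eps)x,\,U(r,T-\eps)\Phi(T-\eps,s)y\rangle_H\,dr.
\end{array}
\end{equation}
Now I let $\delta\to 0^+$ with $\eps$ fixed. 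By strong continuity of $U(\cdot,\cdot)$ and of $Q(\cdot)^{1/2}$ (which converges strongly to $P_T^{1/2}$, being uniformly bounded and converging strongly to $P_T$), the vector $Q(T-\delta)^{1/2}U(T-\delta,T-\eps)\Phi(T-\eps,s)y$ converges strongly in $H$ to $P_T^{1/2}U(T,T-\eps)\Phi(T-\eps,s)y\in R(P_T^{1/2})$; on the other hand, by \eqref{bound} the family $Q(T-\delta)^{1/2}\Phi(T-\delta,T-\eps)x$ is bounded, and by Lemma \ref{C(s)} applied with initial time $T-\eps$ its projection $\Pi Q(T-\delta)^{1/2}\Phi(T-\delta,T-\eps)x$ converges weakly to $C(T-\eps)x$. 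A standard weak--strong pairing (inserting $\Pi$, which is licit because the strong limit lies in $\overline{R(P_T^{1/2})}$) shows that the first term on the right of \eqref{auxiden} tends to $\langle C(T-\eps)x,P_T^{1/2}U(T,T-\eps)\Phi(T-\eps,s)y\rangle_H$, while the integral term tends to the corresponding integral over $[T-\eps,T]$. Thus we obtain the identity
\begin{equation}\label{starident}
\begin{array}{l}
\langle Q(T-\eps)x,\Phi(T-\eps,s)y\rangle_H = \langle C(T-\eps)x,P_T^{1/2}U(T,T-\eps)\Phi(T-\eps,s)y\rangle_H \\[2mm]
\displaystyle \qquad\qquad\qquad + \int_{T-\eps}^{T}\langle M(r)\Phi(r,T-\eps)x,U(r,T-\eps)\Phi(T-\eps,s)y\rangle_H\,dr
\end{array}
\end{equation}
for all $x,y\in H$ and $\eps\in\,]0,T-s[\,$.

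The second step is to let $\eps\to 0^+$ in \eqref{starident}. The integral term is handled exactly as in Lemma \ref{Mu_Mphi} and in the proof of Lemma \ref{legameBC}: writing $\Phi(r,T-\eps)x=\bar y(r;T-\eps,x)$ and $U(r,T-\eps)\Phi(T-\eps,s)y=y_\eps(r;s,y)$, and using Cauchy--Schwarz, H\"older's inequality together with the uniform bounds on $\|\bar y(\cdot;T-\eps,x)\|_{L^{2/(1-2\alpha)}(T-\eps,T;H)}$ and on $\|y_\eps(\cdot;s,y)\|_{L^2(s,T;H)}$, one gets a bound of order $\eps^{\alpha}$ (for fixed $x,y$), so that term tends to $0$. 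For the left-hand side I write $\langle Q(T-\eps)x,\Phi(T-\eps,s)y\rangle_H=\langle Q(T-\eps)^{1/2}x,Q(T-\eps)^{1/2}\Phi(T-\eps,s)y\rangle_H$; since $Q(T-\eps)^{1/2}x\to P_T^{1/2}x\in\overline{R(P_T^{1/2})}$ strongly, since $Q(T-\eps)^{1/2}\Phi(T-\eps,s)y$ is bounded by \eqref{bound}, and since $\langle Q(T-\eps)^{1/2}\Phi(T-\eps,s)y,\,P_T^{1/2}x\rangle_H\to\langle C(s)y,P_T^{1/2}x\rangle_H$ by \eqref{c(s)}, the same weak--strong argument gives $\langle Q(T-\eps)x,\Phi(T-\eps,s)y\rangle_H\to\langle P_T^{1/2}x,C(s)y\rangle_H$. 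Passing to the limit in \eqref{starident} we conclude.

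I expect the only genuine difficulty to be bookkeeping rather than conceptual: one must keep the two iterated limits strictly separated ($\delta\to 0^+$ first, $\eps\to 0^+$ afterwards) and, at each stage, correctly pair the weakly convergent, uniformly bounded family furnished by Lemma \ref{C(s)} and \eqref{bound} with the strongly convergent one furnished by the strong continuity of $U(\cdot,\cdot)$ and of $Q(\cdot)^{1/2}$, always exploiting that $P_T^{1/2}$ maps into $\overline{R(P_T^{1/2})}$ so that the projection $\Pi$ may be inserted freely. The one point that requires a short verification is the uniform-in-$\eps$ boundedness of $\|\bar y(\cdot;T-\eps,x)\|_{L^{2/(1-2\alpha)}(T-\eps,T;H)}$: this follows because \eqref{Riccint3} controls $\|\bar u(\cdot;T-\eps,x)\|_{L^2(T-\eps,T;U)}$ uniformly (via Hypothesis \ref{hpmain6}) and then \cite[Lemma 4.4(i)]{AT1} controls $\bar y$, exactly as in the preceding lemmas.
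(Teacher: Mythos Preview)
Your proposal is correct and follows essentially the same route as the paper: start from \eqref{Riccint1} with $s$ replaced by $T-\eps$ and $y$ by $\Phi(T-\eps,s)y$, let $\delta\to 0^+$ using \eqref{bound}, strong convergence of $Q(\cdot)^{1/2}U(\cdot,T-\eps)$ and \eqref{c(s)} to obtain the identity you call \eqref{starident}, then let $\eps\to 0^+$ handling the integral as in \eqref{base_cu2} and the left side via the weak--strong pairing with $Q(T-\eps)^{1/2}x\to P_T^{1/2}x$. Your explicit insertion of the projection $\Pi$ and the remark on the uniform $L^{2/(1-2\alpha)}$ bound for $\bar y(\cdot;T-\eps,x)$ are in fact a bit more careful than the paper's own write-up, but the argument is the same.
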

\begin{proof}
We go back once again to \eqref{Riccint1}. We replace $\eps$ by $\delta$ (with $0<\delta <\eps$), $s$ by $T-\eps$ and $y$ by $\Phi(T-\eps,s)y$: we get 
\beyy
\lefteqn{\langle Q(T-\eps)x,\Phi(T.\eps,s)y\rangle_H }\\
& & = \langle Q(T-\delta)^\frac12 \Phi(T-\delta,T-\eps)x, Q(T-\delta)^\frac12 U(T-\delta,T-\eps)\Phi(T-\eps,s)y\rangle_H \\
& & \qquad \qquad\qquad\qquad+\int_{T-\eps}^{T-\delta}\langle M(r)\Phi(r,T-\eps)x,U(r,T-\eps)\Phi(T-\eps,s)y\rangle_H\, dr,
\eeyy
and by \eqref{bound}, strong convergence of $Q(T-\delta)^\frac12 U(T-\delta,T-\eps)$  and \eqref{c(s)}, as $\delta \to 0^+$,
\beyy
\lefteqn{\langle Q(T-\eps)^\frac12 x,Q(T-\eps)^\frac12 \Phi(T-\eps,s)y\rangle_H =\langle C(T-\eps)x,P_T^{1/2}U(T,T-\eps)\Phi(T-\eps,s)y\rangle_H }\\
& & \qquad \qquad \qquad \qquad \qquad \qquad + \int_{T-\eps}^T \langle M(r)\Phi(r,T-\eps)x,U(r,T-\eps)\Phi(T-\eps,s)y\rangle_H \,dr.
\eeyy
The last integral tends to $0$ as $\eps\to 0^+$ by Lemma \ref{Mu_Mphi}, arguing as in \eqref{base_cu2}. Thus, using again strong convergence of $Q(\cdot)^\frac12$, \eqref{bound} and \eqref{bound}, we conclude that 
\beyy
\lefteqn{\lim_{\eps \to 0^+} \langle C(T-\eps)x,P_T^{1/2}U(T,T-\eps)\Phi(T-\eps,s)y\rangle_H }\\
& & = \lim_{\eps \to 0^+} \langle Q(T-\eps)^\frac12 x,Q(T-\eps)^\frac12 \Phi(T-\eps,s)y\rangle_H =\langle P_T^{1/2}x,C(s)y \rangle_H\,,
\eeyy
which is our claim.
\end{proof}
The following lemma is crucial, in order to obtain the key relation $\bar{u}(\cdot;s,x)\in D(L_{sT})$.
\begin{lemma}\label{c(s)vabene} Let $\bar{u}(\cdot;s,x)$ and $C(s)$ be defined by \eqref{controlbar} and \eqref{c(s)}. For every $x\in H$ and $s\in [0,T[\,$ we have $P_T^{1/2}C(s)x\in D(L_{sT}^*)$ and
\beq\label{u_bar_new}
\bar{u}(\cdot,s;x) = -N(\cdot)^{-1}L_{sT}^*P_T^{1/2}C(s)x-N(\cdot)^{-1}L_s^*[M(\cdot)\Phi(\cdot,s)x].
\eeq
\end{lemma}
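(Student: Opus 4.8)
The plan is to convert the weak form \eqref{Riccint1} of the Riccati equation into an identity in $H$ for $Q(\cdot)\Phi(\cdot,s)x$ and then to apply $G(\cdot)^*A(\cdot)^*$ to it. Fix $s\in[0,T[\,$, $x\in H$ and $t\in[s,T[\,$. For every $\eps$ with $0<\eps<T-t$, replace in \eqref{Riccint1} the pair $(s,x)$ by $(t,\Phi(t,s)x)$ and use the transitivity \eqref{transit}, i.e. $\Phi(T-\eps,t)\Phi(t,s)=\Phi(T-\eps,s)$ and $\Phi(r,t)\Phi(t,s)=\Phi(r,s)$; since the resulting equality holds for every test vector $y\in H$, it is equivalent to
\[
Q(t)\Phi(t,s)x=U(T-\eps,t)^*Q(T-\eps)\Phi(T-\eps,s)x+\int_t^{T-\eps}U(r,t)^*M(r)\Phi(r,s)x\,dr .
\]
The left-hand side does not depend on $\eps$; by \cite[Lemma 4.4(i)]{AT1} and \eqref{regPhi} we have $\Phi(\cdot,s)x\in L^{2/(1-2\alpha)}(s,T;H)\subseteq L^1(s,T;H)$, so the integral converges in $H$ to $\int_t^{T}U(r,t)^*M(r)\Phi(r,s)x\,dr$ as $\eps\to0^+$, and consequently $U(T-\eps,t)^*Q(T-\eps)\Phi(T-\eps,s)x$ converges in $H$ as well.

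The heart of the argument is to identify this limit. Put $w_\eps:=Q(T-\eps)^{1/2}\Phi(T-\eps,s)x$ and $z_\eps:=Q(T-\eps)\Phi(T-\eps,s)x=Q(T-\eps)^{1/2}w_\eps$; by \eqref{bound} the families $\{w_\eps\}$ and $\{z_\eps\}$ are bounded in $H$, and by Lemma \ref{C(s)} we have $\Pi w_\eps\rightharpoonup C(s)x$ in $H$. I claim $z_\eps\rightharpoonup P_T^{1/2}C(s)x$ in $H$: split $z_\eps=Q(T-\eps)^{1/2}\Pi w_\eps+Q(T-\eps)^{1/2}(I-\Pi)w_\eps$; for the first summand, testing against $v\in H$ and using that $Q(T-\eps)^{1/2}$ is selfadjoint with $Q(T-\eps)^{1/2}v\to P_T^{1/2}v$ gives $\langle Q(T-\eps)^{1/2}\Pi w_\eps,v\rangle_H=\langle \Pi w_\eps,Q(T-\eps)^{1/2}v\rangle_H\to\langle C(s)x,P_T^{1/2}v\rangle_H=\langle P_T^{1/2}C(s)x,v\rangle_H$; for the second summand, $P_T^{1/2}(I-\Pi)=0$ yields $(I-\Pi)Q(T-\eps)^{1/2}v=(I-\Pi)\big(Q(T-\eps)^{1/2}-P_T^{1/2}\big)v\to0$, whence $\langle Q(T-\eps)^{1/2}(I-\Pi)w_\eps,v\rangle_H=\langle (I-\Pi)w_\eps,(I-\Pi)Q(T-\eps)^{1/2}v\rangle_H\to0$ by boundedness of $\{w_\eps\}$. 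Since $U(T-\eps,t)v\to U(T,t)v$ in $H$, it follows that $U(T-\eps,t)^*z_\eps\rightharpoonup U(T,t)^*P_T^{1/2}C(s)x$; comparing with the strong limit found above we obtain, for every $t\in[s,T[\,$,
\[
Q(t)\Phi(t,s)x-\int_t^{T}U(r,t)^*M(r)\Phi(r,s)x\,dr=U(T,t)^*P_T^{1/2}C(s)x .
\]

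Finally I would apply $G(t)^*A(t)^*$ — understood via \eqref{UAG} as $-([-A(t)]^\alpha G(t))^*[-A(t)^*]^{1-\alpha}$ — to this last identity. By the third condition in \eqref{classeQ} the vector $Q(t)\Phi(t,s)x$ lies in $D([-A(t)^*]^{1-\alpha})$, and for a.e. $t$ the map $r\mapsto[-A(t)^*]^{1-\alpha}U(r,t)^*M(r)\Phi(r,s)x$ is Bochner integrable on $(t,T)$ (the relevant kernel is bounded by $c(r-t)^{\alpha-1}$, cf. \eqref{stimaUAG}, and the fractional integral of the $L^{2/(1-2\alpha)}$–function $r\mapsto\|\Phi(r,s)x\|_H$ is finite a.e.), so $\int_t^TU(r,t)^*M(r)\Phi(r,s)x\,dr$ lies in $D([-A(t)^*]^{1-\alpha})$ too; hence, by the identity just derived, so does $U(T,t)^*P_T^{1/2}C(s)x$, and $G(t)^*A(t)^*$ acts termwise. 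The first term gives $N(t)\bar u(t;s,x)$ by \eqref{controlbar}, while the second yields $-\int_t^TG(t)^*A(t)^*U(r,t)^*M(r)\Phi(r,s)x\,dr=L_s^*[M(\cdot)\Phi(\cdot,s)x](t)$ by the integral formula for $L_s^*$ of \cite[Lemma 4.4]{AT1}. Thus $G(\cdot)^*A(\cdot)^*U(T,\cdot)^*P_T^{1/2}C(s)x=N(\cdot)\bar u(\cdot;s,x)+L_s^*[M(\cdot)\Phi(\cdot,s)x]$ a.e.\ on $(s,T)$; the right-hand side belongs to $L^2(s,T;U)$ (recall $\bar u(\cdot;s,x)\in L^2(s,T;U)$ by \eqref{Riccint3} and $L_s^*[M(\cdot)\Phi(\cdot,s)x]\in L^2(s,T;U)$ by \cite[Lemma 4.4]{AT1}), so by \eqref{LsT*} this says exactly that $P_T^{1/2}C(s)x\in D(L_{sT}^*)$ and $-L_{sT}^*P_T^{1/2}C(s)x=N(\cdot)\bar u(\cdot;s,x)+L_s^*[M(\cdot)\Phi(\cdot,s)x]$; dividing by $N(\cdot)$, whose inverse is bounded by Hypothesis \ref{hpmain6}, gives \eqref{u_bar_new}.

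The main obstacle is the weak convergence $z_\eps\rightharpoonup P_T^{1/2}C(s)x$: one cannot simply let $\Phi(T-\eps,s)x$ converge, since this quantity may be unbounded as $\eps\to0^+$, and $Q(\cdot)^{1/2}$ converges to $P_T^{1/2}$ only strongly, not in norm; it is precisely the splitting along the projection $\Pi$, combined with the selfadjointness of $Q(T-\eps)^{1/2}$ and the relation $P_T^{1/2}(I-\Pi)=0$, that makes the limit computable. A secondary, purely technical, point is the a.e.\ interchange of $G(t)^*A(t)^*$ with the integral, which rests on the fractional integrability already encoded in the mapping properties of $L_s^*$ in \cite[Lemma 4.4]{AT1}.
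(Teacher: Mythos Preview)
Your proof is correct and follows the same overall strategy as the paper: write \eqref{Riccint1} in strong form as
\[
Q(t)\Phi(t,s)x=U(T-\eps,t)^*Q(T-\eps)\Phi(T-\eps,s)x+\int_t^{T-\eps}U(r,t)^*M(r)\Phi(r,s)x\,dr,
\]
and then combine the action of $G(t)^*A(t)^*$ with the limit $\eps\to0^+$.

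The difference is in the order of operations and in how the limit is identified. The paper applies $G(t)^*A(t)^*$ \emph{first} (at fixed $\eps$), shows that the $M$-integral converges in $L^2(s,T;U)$ to $-L_s^*[M(\cdot)\Phi(\cdot,s)x]$, and then asserts ``by difference'' that the remaining term converges in $L^2(s,T;U)$ to $-L_{sT}^*P_T^{1/2}C(s)x$, without making explicit why the limit has precisely this form. You instead pass to the limit in $H$ \emph{before} applying $G(t)^*A(t)^*$, and you supply the missing identification: the weak convergence $Q(T-\eps)\Phi(T-\eps,s)x\rightharpoonup P_T^{1/2}C(s)x$, obtained by splitting along the projection $\Pi$ onto $\overline{R(P_T^{1/2})}$ and using selfadjointness of $Q(T-\eps)^{1/2}$ together with $(I-\Pi)P_T^{1/2}=0$. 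This weak limit, combined with the strong convergence $U(T-\eps,t)v\to U(T,t)v$ and the strong convergence already known from the equation, pins down the identity $Q(t)\Phi(t,s)x-\int_t^TU(r,t)^*M(r)\Phi(r,s)x\,dr=U(T,t)^*P_T^{1/2}C(s)x$ pointwise in $t$, after which the termwise application of $G(t)^*A(t)^*$ is routine. Your argument is therefore a cleaner variant: it makes transparent the one step the paper leaves implicit, at the modest cost of checking that each term lies in $D([-A(t)^*]^{1-\alpha})$ when applying $G(t)^*A(t)^*$ at the end.
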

\begin{proof}
We start from the integral equation \eqref{Riccint1}, which can be rewritten in strong form, with $s$ replaced by $t$ and $x$ replaced by $\Phi(t,s)x$, as
$$Q(t)\Phi(t,s)x= U(T-\eps,t)^*Q(T-\eps)\Phi(T-\eps,s)x+\int_t^{T-\eps} U(r,t)^*M(r)\Phi(r,s)x\,dr .$$
We operate with $G(t)^*A(t)^*$ in both members: by \eqref{controlbar} we obtain 
\beq\label{eq_per_u}
\begin{array}{lcl}
N(t)\bar{u}(t,s,x) & = & G(t)^*A(t)^*Q(t)\Phi(t,s)x \\[2mm]
& = & G(t)^*A(t)^*U(T-\eps,t)^*Q(T-\eps)\Phi(T-\eps,s)x\\
& & \quad \displaystyle +\int_t^{T-\eps} G(t)^*A(t)^*U(r,t)^*M(r)\Phi(r,s)x\,dr .
\end{array} 
\eeq
The second term in the last member of \eqref{eq_per_u} is exactly the function $-L_s^*[\chi_{[s,T-\eps]}M(\cdot)\Phi(\cdot,s)x]$ evaluated in $t$, which converges as $\eps\to 0^*$ to $-L_s^*[M(\cdot)\Phi(\cdot,s)x]$ in $L^2(s,T;U)$: indeed, by \cite[Lemma 4.4(iv)]{AT1},
$$
\begin{array}{l}
\displaystyle \int_s^T \left\|L_s^*[\chi_{[s,T-\eps]}M(\cdot)\Phi(\cdot,s)x](t)-L_s^*[M(\cdot)\Phi(\cdot,s)x](t)\right\|^2_U\,dt\\
\qquad \displaystyle = \int_s^T \left\| L_s^*[\chi_{[T-\eps,T]}M(\cdot)\Phi(\cdot,s)x]](t)\right\|^2_U \,dt \le \left[\int_{T-\eps}^T \|M(r)\Phi(r,s)x\|_u^q\,dr\right]^{\frac2{q}},
\end{array}
$$
with $q=\frac2{1-2\alpha}\,$, and the last quantity goes to $0$ as $\eps \to 0^*$.\\
By difference, the first term in the last member of \eqref{eq_per_u} belongs to $L^2(s,T;U)$ and it converges to $ G(\cdot)^*A(\cdot)^*U(T,\cdot)^*P_T^{1/2}C(s)x$ in $L^2(s,t;U)$. By definition (see \eqref{LsT*}), this means $P_T^{1/2}C(s )x \in D(L_{sT}^*)$ and formula \eqref{u_bar_new} follows.
\end{proof}
We now state the key result of this proof.
\begin{proposition}\label{u_in_D(LsT)} Let $\bar{u}(\cdot;s,x)$ and $C(s)$ be defined by \eqref{controlbar} and \eqref{c(s)}. For $0\le s<T$ and $x\in H$ we have
$\bar{u}(\cdot;s,x)\in D(L_{sT})$ and
$$C(s)x = P_T^{1/2}U(T,s)x + P_T^{1/2}L_{sT}\bar{u}(\cdot;s,x).$$
\end{proposition}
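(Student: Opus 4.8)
The plan is to use the approximating controls $u_\eps$ of \eqref{u_eps_y_eps}, which already lie in $D(L_{sT})$, together with the closedness furnished by Hypothesis \ref{hpmain7}, and to reduce the whole statement to a single convergence: $P_T^{1/2}y_\eps(T)\to C(s)x$ in $H$. First I would note that, by \eqref{Ls0T}, a control vanishing on $[0,s]$ satisfies $L_{0T}u=L_{sT}u$, so $P_T^{1/2}L_{sT}$ is the restriction of the closed operator $P_T^{1/2}L_{0T}$ to the closed subspace $L^2(s,T;U)\subseteq L^2(0,T;U)$, hence is itself closed; its graph is then a norm--closed linear subspace of $L^2(s,T;U)\times H$, in particular weakly closed. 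Now $u_\eps\in D(L_{sT})$, $u_\eps\to\bar u(\cdot;s,x)$ in $L^2(s,T;U)$, and by \eqref{y_eps}
\[
P_T^{1/2}L_{sT}u_\eps=P_T^{1/2}\bigl(y_\eps(T)-U(T,s)x\bigr)=w_\eps-P_T^{1/2}U(T,s)x,\qquad w_\eps:=P_T^{1/2}U(T,T-\eps)\Phi(T-\eps,s)x .
\]
So if $w_\eps\to C(s)x$ in $H$ (weak convergence even suffices), then $\bigl(u_\eps,P_T^{1/2}L_{sT}u_\eps\bigr)$ converges weakly in $L^2(s,T;U)\times H$ to $\bigl(\bar u(\cdot;s,x),C(s)x-P_T^{1/2}U(T,s)x\bigr)$, and membership of this limit in the weakly closed graph of $P_T^{1/2}L_{sT}$ yields at once $\bar u(\cdot;s,x)\in D(L_{sT})$ together with the desired identity.

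To obtain this convergence I would recycle the quantitative estimate produced inside the proof of Lemma \ref{range_limit}(i): from \eqref{a+c-2b}, letting the inner parameter go to $0^+$ and invoking \eqref{c(s)}, one gets, for every $\sigma\in\,]0,T[\,$ and $z\in H$, that $\|C(T-\sigma)z-P_T^{1/2}U(T,T-\sigma)z\|_H^2$ is bounded by a nonpositive $M$--term plus $\int_{T-\sigma}^T\bigl[\|N(r)^{-1/2}G(r)^*A(r)^*Q(r)U(r,T-\sigma)z\|_U^2-\|N(r)^{-1/2}G(r)^*A(r)^*Q(r)\Phi(r,T-\sigma)z\|_U^2\bigr]\,dr$. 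Applying this with $\sigma=\eps$ and $z=\Phi(T-\eps,s)x$, using the evolution law \eqref{transit} (so that $\Phi(r,T-\eps)\Phi(T-\eps,s)x=\Phi(r,s)x$ and $C(T-\eps)\Phi(T-\eps,s)x=C(s)x$, the latter being the relation already noted in the proof of Lemma \ref{range_limit}(ii)), and recalling that $y_\eps(r)=U(r,T-\eps)\Phi(T-\eps,s)x$ on $[T-\eps,T]$ while $N(r)^{-1/2}G(r)^*A(r)^*Q(r)\Phi(r,s)x=N(r)^{1/2}\bar u(r;s,x)$ by \eqref{controlbar}, this collapses to
\[
\|C(s)x-w_\eps\|_H^2\ \le\ d_\eps-b_\eps\ \le\ d_\eps,\qquad b_\eps:=\int_{T-\eps}^T\|N(r)^{1/2}\bar u(r;s,x)\|_U^2\,dr,\quad d_\eps:=\int_{T-\eps}^T\|N(r)^{-1/2}G(r)^*A(r)^*Q(r)y_\eps(r)\|_U^2\,dr .
\]

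The crux is now to show $d_\eps\to0$; this is the step I expect to be the main obstacle. Since $N(\cdot)^{1/2}\bar u(\cdot;s,x)\in L^2(s,T;U)$ by \eqref{Riccint3}, we have $b_\eps\to0$. Moreover, on $[T-\eps,T]$ one has $y_\eps-\Phi(\cdot,s)x=-L_{T-\eps}\bigl(\bar u(\cdot;s,x)|_{[T-\eps,T]}\bigr)$, immediate from \eqref{u_eps_y_eps}, \eqref{Ls} and \eqref{statebar}; hence
\[
d_\eps^{1/2}\le b_\eps^{1/2}+\bigl\|\,N(\cdot)^{-1/2}G(\cdot)^*A(\cdot)^*Q(\cdot)\,L_{T-\eps}\bigl(\bar u(\cdot;s,x)|_{[T-\eps,T]}\bigr)\,\bigr\|_{L^2(T-\eps,T;U)} ,
\]
and the last term is $\le C\,\|\bar u(\cdot;s,x)\|_{L^2(T-\eps,T;U)}=C\,b_\eps^{1/2}$ once one knows that the operator $v\mapsto N(\cdot)^{-1/2}G(\cdot)^*A(\cdot)^*Q(\cdot)[L_{s'}v](\cdot)$ is bounded on $L^2(s',T;U)$ uniformly in $s'\in[0,T[\,$ — the $L^2$ counterpart of the mapping properties used in \cite{AT1}, \cite{AT2} to invert $1+K_s$ in \eqref{eqclosedloop}. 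The genuine difficulty is exactly here: the naive pointwise bound $\|G(\cdot)^*A(\cdot)^*Q(\cdot)\|\le c(T-\cdot)^{\alpha-1}$ is not integrable enough for $\alpha<\tfrac12$, so one must use the finer $L^2$--type a priori information (the same that makes $\bar u(\cdot;s,x)\in L^2$ and $\Phi(\cdot,s)x\in L^{2/(1-2\alpha)}$). Granting it, $d_\eps\le(1+C)^2b_\eps\to0$, hence $w_\eps\to C(s)x$ in $H$, and the first paragraph concludes.

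Should a direct proof of $d_\eps\to0$ turn out to be awkward, an alternative for the convergence $w_\eps\to C(s)x$ is to prove first only that $\{w_\eps\}$ is bounded in $H$ and then identify its weak limit. Since $w_\eps,C(s)x\in\overline{R(P_T^{1/2})}=\overline{\bigcup_{0\le t<T}R(C(t))}$ by Lemma \ref{range_limit}(ii), it is enough to test against the vectors $C(t)w$, $s\le t<T$, $w\in H$: for these, Lemma \ref{legameBC}, Lemma \ref{legameBCeps} and the strong convergence $C(T-\eps)\to P_T^{1/2}$ of Lemma \ref{range_limit}(i) together pin down $\lim_{\eps\to0^+}\langle w_\eps,C(t)w\rangle_H=\langle C(s)x,C(t)w\rangle_H$, so that $w_\eps\rightharpoonup C(s)x$ and the reduction of the first paragraph again applies.
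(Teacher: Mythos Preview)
Your opening reduction is clean and correct: closedness of $P_T^{1/2}L_{sT}$ follows from Hypothesis~\ref{hpmain7} via \eqref{Ls0T}, its graph is weakly closed, and since $u_\eps\to\bar u(\cdot;s,x)$ in $L^2(s,T;U)$ it would indeed suffice to show that $w_\eps:=P_T^{1/2}U(T,T-\eps)\Phi(T-\eps,s)x$ converges (even weakly) to $C(s)x$. The derivation of the inequality $\|C(s)x-w_\eps\|_H^2\le d_\eps-b_\eps$ from \eqref{a+c-2b}, \eqref{transit} and \eqref{c(s)} is also correct.

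The gap is in both proposed ways of controlling $w_\eps$. In the main route you invoke a uniform $L^2(s',T;U)\to L^2(s',T;U)$ bound for $v\mapsto N(\cdot)^{-1/2}G(\cdot)^*A(\cdot)^*Q(\cdot)[L_{s'}v](\cdot)$. This operator has kernel dominated only by $c\,(T-r)^{\alpha-1}(r-\sigma)^{\alpha-1}$, and the factor $(T-r)^{\alpha-1}$ is not square integrable near $T$ when $\alpha<\tfrac12$; already for $v\equiv 1$ on $[s',T]$ the $L^2$ norm of the output blows up. The ``finer $L^2$--type information'' you allude to --- that $\bar u\in L^2$ and $\Phi(\cdot,s)x\in L^{2/(1-2\alpha)}$ --- is \emph{not} a mapping property: it comes from the Riccati identity \eqref{Riccint3} itself and says nothing about $G^*A^*Q(\cdot)$ composed with $L_{s'}$ on a generic input. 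The inversion of $1+K_s$ in Lemma~\ref{closedloop} is done on $[s,T-\eps]$, where the singular factor is harmless; no uniform bound up to $T$ is available. In the alternative route you assume $\{w_\eps\}$ bounded, but this is equivalent to boundedness of $d_\eps$: from \eqref{Riccintw2} with $T-\eps$ in place of $s$ and $\Phi(T-\eps,s)x$ in place of $x$ one gets $\|w_\eps\|_H^2=\|Q(T-\eps)^{1/2}\Phi(T-\eps,s)x\|_H^2-\text{(M--term)}+d_\eps$, so the two quantities differ by a bounded amount. Thus your alternative inherits the same obstruction. Also, the identification step you sketch --- testing against $C(t)w$ via Lemma~\ref{legameBC} --- would require $\langle B(t)w,\Phi(t,s)x\rangle_H=\langle C(t)w,C(s)x\rangle_H$, i.e.\ $B(t)=C(t)^*C(t)$, which is Corollary~\ref{B=C*C} and is proved \emph{after} the present proposition; the combination Lemma~\ref{legameBCeps} $+$ Lemma~\ref{range_limit}(i) you mention does give $\langle P_T^{1/2}z,w_\eps\rangle_H\to\langle P_T^{1/2}z,C(s)x\rangle_H$ for every $z$, but only after you already know $\{w_\eps\}$ is bounded.

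The paper closes this circle by a duality argument that bypasses any direct estimate of $w_\eps$ or $d_\eps$. The key input is Lemma~\ref{c(s)vabene}, which shows $P_T^{1/2}C(t)x\in D(L_{sT}^*)$ for every $t\in[s,T[\,$; combined with Lemma~\ref{range_limit}(ii) this makes $L_{sT}^*P_T^{1/2}$ a closed, densely defined operator, so its adjoint $[L_{sT}^*P_T^{1/2}]^*$ is bounded on all of $L^2(s,T;U)$ and extends the closed operator $P_T^{1/2}L_{sT}$. Then $P_T^{1/2}L_{sT}u_\eps=[L_{sT}^*P_T^{1/2}]^*u_\eps\to[L_{sT}^*P_T^{1/2}]^*\bar u$ in $H$ simply because $u_\eps\to\bar u$ in $L^2$, and closedness finishes the job. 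In other words, the missing idea is not a sharper kernel bound but the observation --- via Lemma~\ref{c(s)vabene} --- that $P_T^{1/2}L_{sT}$ admits a bounded extension.
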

\begin{proof} First of all, we note that, by Lemma \ref{c(s)vabene} and \eqref{LsT*}, it holds $P_T^{1/2}C(t)x\in D(L_{tT}^*) \subseteq D(L_{sT}^*)$ for every $s\in[0,t]$. Thus, recalling \eqref{u_eps_y_eps}, we start from the limit in \eqref{lim_key} and rewrite it as 
$$\begin{array}{l}
\langle B(t)x,\Phi(t,s)y\rangle_H = \displaystyle \lim_{\eps \to 0^-} \langle C(t)x,P_T^{1/2}U(T,s)y+P_T^{1/2}L_{sT}u_\eps(\cdot;s,x)\rangle_H\\
\quad \quad =\displaystyle \langle C(t)x, P_T^{1/2}U(T,s)y\rangle_H + \lim_{\eps \to 0^-}\langle L_{sT}^*P_T^{1/2}C(t)x,u_\eps(\cdot;s,y)\rangle_{L^2(s,T;U)}\\
\quad \quad =\langle C(t)x, P_T^{1/2}U(T,s)y\rangle_H + \langle L_{sT}^*P_T^{1/2}C(t)x,\bar{u}(\cdot;s,y)\rangle_{L^2(s,T;U)}\\[1mm]
\quad \quad =\langle x, C(t)^*P_T^{1/2}U(T,s)y + [L_{sT}^*P_T^{1/2}C(t)]^*\bar{u}(\cdot;s,y)\rangle_H\,,\end{array}
$$
where, by Lemma \ref{c(s)vabene}, the operator $L_{sT}^*P_T^{1/2}C(t)$ is closed with domain $H$; hence it is  bounded, and of course its adjoint is bounded, too. Since $x\in H$ is arbitrary, we deduce
$$B(t)\Phi(t,s)y = C(t)^*P_T^{1/2}U(T,s)y +[L_{sT}^*P_T^{1/2}C(t)]^*\bar{u}(\cdot;s,y).$$
We note that the operator $L_{sT}^*P_T^{1/2}$ is obviously closed and its domain $$D(L_{sT}^*P_T^{1/2})\supseteq \left[\bigcup_{s\le t<T} R(C(t))\right] \cup \ker P_T^{1/2}$$
is dense in $H$ by Lemma \ref{range_limit}(ii); moreover $C(t)$ is bounded. So we may apply \cite[Theorem 13.2]{R},obtaining $[L_{sT}^*P_T^{1/2}C(t)]^*=C(t)^*[L_{sT}^*P_T^{1/2}]^*$. Thus we may write
\beq\label{b(t)fi(t,s)}
B(t)\Phi(t,s)y = C(t)^*\left[P_T^{1/2}U(T,s)y +[L_{sT}^* P_T^{1/2}]^*\bar{u}(\cdot;s,y)\right];
\eeq 
In particular, we observe that 
$$L^2(s;T;U) = D([L_{sT}^*P_T^{1/2}C(t)]^*)=D(C(t)^*[L_{sT}^* P_T^{1/2}]^*)\subseteq D([L_{sT}^* P_T^{1/2}]^*).$$
It is easy to verify that the operator $[L_{sT}^* P_T^{1/2}]^*$ is closed; since it has the whole space $L^2(s,T;U)$ as its domain, it must be bounded, i.e. $[L_{sT}^* P_T^{1/2}]^*\in {\cal L}(L^2(s,T;U),H)$. Moreover, since $P_T^{1/2}\in {\cal L}(H)$ and $L_{sT}$ is densely defined, again by \cite[Theorem 13.2]{R} we have $[L_{sT}^* P_T^{1/2}]^*=[(P_T^{1/2} L_{sT})^*]^*$; so, $[L_{sT}^* P_T^{1/2}]^*$ is a bounded extension of the closed operator $P_T^{1/2}L_{sT}$. 
Using this information, we may write, as $\eps \to 0^+$,
$$\begin{array}{lcl}
P_T^{1/2}L_{sT}u_\eps=P_T^{1/2}U(T,T-\eps)\Phi(T-\eps,s)x  & = & P_T^{1/2}U(t,s)x + P_T^{1/2}L_{sT}u_\eps(\cdot;s,x) \\[1mm]
& = & P_T^{1/2}U(t,s)x + [L_{sT}^*P_T^{1/2}]^* u_\eps(\cdot;s,x) \\[1mm]
& \to & P_T^{1/2}U(t,s)x + [L_{sT}^*P_T^{1/2}]^*\bar{u}(\cdot;s,x).
\end{array}$$
Thus,
$$u_\eps(\cdot;s,x)\to \bar{u}(\cdot;s,x) \ \textrm{in} \ L^2(s,T;U), \qquad 
P_T^{1/2}L_{sT}u_\eps(\cdot;s,x) \to [L_{sT}^*P_T^{1/2}]^*\bar{u}(\cdot;s,x) \ \textrm{in} \ H,$$
so that $\bar{u}(\cdot;s;x)\in D(L_{sT})$ and $P_T^{1/2}L_{sT}\bar{u}(\cdot;s,x)= [L_{sT}^*P_T^{1/2}]^*\bar{u}(\cdot;s,x)$.\\
Set momenrtarily
$$E(s)x= \lim_{\eps\to 0^+} P_T^{1/2}U(T,T-\eps)\Phi(T-\eps,s)x.$$
By \eqref{lim_key} with $t=s$, using Lemma \ref{legameBCeps}, we deduce for every $x\in H$ 
$$\langle P_T^{1/2}x,,C(s)y\rangle_H=\lim_{\eps\to 0^+} \langle C(T-\eps)x,P_T^{1/2}U(T,T-\eps)\Phi(T-\eps,s)y\rangle_H = \langle P_T^{1/2}x,E(s)y\rangle_H\,.$$ 
Since $E(S)$ and $C(s)$ are bounded operators, by density we have
$$\langle z,C(s)y\rangle_H = \langle z,E(s)y\rangle_H \qquad \forall z\in \overline{R(P_T^{1/2})}.$$
As both $E(s)$ and $C(s)$ have range contained  in $\overline{R(P_T^{1/2})}$, we deduce $E(s)=C(s)$, i.e.
$$P_T^{1/2}U(T,T-\eps)\Phi(T-\eps,s)x \to C(s)x \quad \textrm{in} \ H \quad \textrm{as} \ \eps \to 0^+,$$
and the result follows.
\end{proof}
\begin{corollary}\label{B=C*C} Let $C(s)$ and $B(s)$ be defined by \eqref{c(s)} and \eqref{b(s)xy}. For every $s\in[0,T[\,$ we have the equality
$$B(s)=C(s)^*C(s);$$ 
consequently
$$Q(T-\eps)^{1/2}\Phi(T-\eps,s)x \to C(s)x \quad \textrm{in}\ H \quad \textrm{as} \ \eps \to 0^+.$$
\end{corollary}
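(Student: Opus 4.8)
The plan is to fix $s\in[0,T[$ and $x\in H$, abbreviate $v_\eps:=Q(T-\eps)^{1/2}\Phi(T-\eps,s)x$, and prove the two scalar inequalities $\langle C(s)^*C(s)x,x\rangle_H\le\langle B(s)x,x\rangle_H$ and $\langle B(s)x,x\rangle_H\le\|C(s)x\|_H^2$, whence $\langle B(s)x,x\rangle_H=\langle C(s)^*C(s)x,x\rangle_H$ for all $x$; since $B(s)$ and $C(s)^*C(s)$ are bounded selfadjoint, a polarization argument gives $B(s)=C(s)^*C(s)$. The ``consequently'' will then be a Radon--Riesz argument. Two facts are available from the start: by the very definition \eqref{b(s)} of $B(s)$ one has $\|v_\eps\|_H^2=\langle Q(T-\eps)\Phi(T-\eps,s)x,\Phi(T-\eps,s)x\rangle_H\to\langle B(s)x,x\rangle_H$, and by Lemma \ref{C(s)} one has $\Pi v_\eps\rightharpoonup C(s)x$ weakly in $H$ (for test vectors orthogonal to $\overline{R(P_T^{1/2})}$ both sides vanish, since $R(C(s))\subseteq\overline{R(P_T^{1/2})}$ by Lemma \ref{C(s)}).

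For the first inequality I would simply use weak lower semicontinuity of the norm together with $\|\Pi v_\eps\|_H\le\|v_\eps\|_H$: $\|C(s)x\|_H^2\le\liminf_{\eps\to0^+}\|\Pi v_\eps\|_H^2\le\liminf_{\eps\to0^+}\|v_\eps\|_H^2=\langle B(s)x,x\rangle_H$. For the reverse inequality I would invoke the LQ structure: by Proposition \ref{u_in_D(LsT)} the feedback control $\bar u=\bar u(\cdot;s,x)$ is admissible, i.e. $\bar u\in D(L_{sT})$, and its final state satisfies $P_T^{1/2}\bar y(T)=C(s)x$ with $\bar y=\Phi(\cdot,s)x$. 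Since $N(r)^{1/2}\bar u(r)=N(r)^{-1/2}G(r)^*A(r)^*Q(r)\Phi(r,s)x$, equation \eqref{Riccint3} says exactly that $\int_s^T\|M(r)^{1/2}\bar y(r)\|_H^2\,dr+\int_s^T\|N(r)^{1/2}\bar u(r)\|_U^2\,dr=\langle Q(s)x,x\rangle_H-\langle B(s)x,x\rangle_H$, and therefore, evaluating the cost \eqref{Js_preciso} at the admissible control $\bar u$, $J_s(\bar u)=\langle Q(s)x,x\rangle_H-\langle B(s)x,x\rangle_H+\|C(s)x\|_H^2$. Now optimality of $\widehat u$ and \eqref{P(s)xx} give $J_s(\bar u)\ge\langle P(s)x,x\rangle_H$, while part \textbf{(a)}, already established in Subsection \ref{PgeQ}, gives $\langle P(s)x,x\rangle_H\ge\langle Q(s)x,x\rangle_H$; hence $\langle B(s)x,x\rangle_H\le\|C(s)x\|_H^2$, completing the proof of $B(s)=C(s)^*C(s)$.

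For the strong convergence, I now have $\|v_\eps\|_H^2\to\langle B(s)x,x\rangle_H=\|C(s)x\|_H^2$. Sandwiching $\|\Pi v_\eps\|_H^2$ between $\limsup_\eps\|\Pi v_\eps\|_H^2\le\lim_\eps\|v_\eps\|_H^2=\|C(s)x\|_H^2$ and the weak lower semicontinuity bound $\|C(s)x\|_H^2\le\liminf_\eps\|\Pi v_\eps\|_H^2$ yields $\|\Pi v_\eps\|_H\to\|C(s)x\|_H$; combined with $\Pi v_\eps\rightharpoonup C(s)x$ this gives, by the Radon--Riesz property of Hilbert space, $\Pi v_\eps\to C(s)x$ strongly, while $\|(I-\Pi)v_\eps\|_H^2=\|v_\eps\|_H^2-\|\Pi v_\eps\|_H^2\to0$. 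Adding up, $v_\eps=Q(T-\eps)^{1/2}\Phi(T-\eps,s)x\to C(s)x$ in $H$.

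The main obstacle is the reverse inequality $\langle B(s)x,x\rangle_H\le\|C(s)x\|_H^2$: it is the only point that genuinely invokes optimality, and it rests entirely on the identifications obtained in Proposition \ref{u_in_D(LsT)} (namely that $\bar u$ is admissible and that its final state equals $C(s)x$) and on the ``energy identity'' \eqref{Riccint3}; everything else is soft functional analysis. One should also check that using part \textbf{(a)} here creates no circularity, which is fine because \textbf{(a)} was proved independently in Subsection \ref{PgeQ}.
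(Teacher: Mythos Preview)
Your argument is correct, but it differs from the paper's in the way you establish $B(s)=C(s)^*C(s)$. The paper does not split into two inequalities; instead it combines Lemma~\ref{legameBC} (at $t=s$) with the strong convergence $P_T^{1/2}U(T,T-\eps)\Phi(T-\eps,s)y\to C(s)y$ obtained in (the proof of) Proposition~\ref{u_in_D(LsT)} to get the \emph{bilinear} identity $\langle B(s)x,y\rangle_H=\langle C(s)x,C(s)y\rangle_H$ directly, hence $B(s)=C(s)^*C(s)$ without polarization. Your route replaces this by a soft inequality (weak lower semicontinuity of the norm) plus an optimality argument that feeds part~\textbf{(a)} back in: from $J_s(\bar u)=\langle Q(s)x,x\rangle_H-\langle B(s)x,x\rangle_H+\|C(s)x\|_H^2\ge\langle P(s)x,x\rangle_H\ge\langle Q(s)x,x\rangle_H$ you get the reverse inequality. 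This is legitimate and non-circular (part~\textbf{(a)} was proved independently in Subsection~\ref{PgeQ}), and it has the merit of making the variational meaning of $B(s)$ transparent and of using only the \emph{statement} of Proposition~\ref{u_in_D(LsT)}. The price is that you import part~\textbf{(a)} into the proof of a lemma that in the paper's organization belongs entirely to part~\textbf{(b)}, and you bypass Lemma~\ref{legameBC} only nominally, since Proposition~\ref{u_in_D(LsT)} already relies on it. The strong-convergence step is handled the same way in both proofs (weak convergence plus convergence of norms), with your version more carefully splitting $v_\eps$ into $\Pi v_\eps$ and $(I-\Pi)v_\eps$, which is the rigorous reading of Lemma~\ref{C(s)}.
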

\begin{proof}
Indeed, by Lemma \ref{legameBC} with $t=s$, we have for every $x,y\in H$,
$$\langle B(s)x,y\rangle_H = \lim_{\eps\to 0^+} \langle C(s)x,P_T^{1/2}U(T,T-\eps)\Phi(T-\eps,s)y\rangle_H \,,$$
and by Proposition \ref{u_in_D(LsT)},
$$\langle B(s)x,y\rangle_H = \langle C(s)x,C(s)y\rangle_H\,,$$
and this means $B(s)=C(s)^*C(s)$. Hence, as $\eps \to 0^+$, 
$$Q(T-\eps)^{1/2}\Phi(T-\eps,s)x \rightharpoonup C(s)x, \quad \|Q(T-\eps)^{1/2}\Phi(T-\eps,s)x\|_H^2 \to \langle B(s)x,x\rangle_H = \|C(s)x\|_H^2\,.$$
This implies $Q(T-\eps)^{1/2}\Phi(T-\eps,s)x \to C(s)x$ in $H$ as $\eps\to 0^+$.
\end{proof}
We can now conclude the proof of Theorem \ref{unic} {\bf (b)}. In fact, since $\bar{u}(\cdot;s,x)\in D(L_{sT})$, we have, recalling \eqref{Riccint3} (and writing $\bar{u}(\cdot;s,x)=\bar{u}$ and $\bar{y}(\cdot;s,x)=\bar{y}$)
$$\begin{array}{lcl}
\langle P(s)x,x\rangle_H & = & J_s(\widehat{u}(\cdot;s,x)) \le J_s(\bar{u}(\cdot;s,x)) \\[1mm]
& = & \displaystyle \int_s^T \langle M(r)\bar{y}(r),\bar{y}(r)\rangle_H\,dr + \displaystyle \int_s^T \langle N(r)\bar{u}(r),\bar{u}(r)\rangle_U\,dr + \langle B(s)x,x\rangle_H \\[1mm]
& =& \langle Q(s)x,x\rangle_H\,.
\end{array}$$ 
This proves {\bf (b)} in \eqref{tesi}. \\[2mm]
{\bf Proof of Theorem \ref{reg_Py}.} The argument is very easy: $P(\cdot)$ is the unique solution of the Riccati equation; as the optimal control $\widehat{u}$ is given by \eqref{optcontr_feedback}, it coincides with the function $\bar{u}$ given by \eqref{controlbar}, and hence the optimal state $\widehat{y}=\widehat{\Phi}(\cdot,s)x$ coincides with $\bar{y}=\Phi(\cdot,s)x$. On the other hand, by Corollary \ref{B=C*C}, $t\mapsto Q(t)^{1/2}\Phi(t,s)x=P(t)^{1/2}\widehat{\Phi}(t,s)x$ is continuous in $[s,T]$ with $P_T^{1/2}\Phi(T,s)x= C(s)x = P_T^{1/2}[U(T,s)x+ L_{sT}\bar{u}(\cdot;s,x)]$. This proves Theorem \ref{reg_Py}.
%---------------------------------------------------------------
\section*{Appendix}
\appendix
\section{Proof of Lemma \ref{idcontr}}\label{A}
%---------------------------------------------------------------
Let us split the terms on the right-hand side of equation \eqref{idbase}. Recalling \eqref{state_Ls},  we have $y(t) = U(t,s)x+[L_su](t)$, with $L_s$ given by \eqref{Ls}, and hence
\beyy 
\lefteqn{-\int_s^{T-\eps} \|M(r)^{1/2}y(r)\|_H^2\,dr = -\int_s^{T-\eps} \|M(r)^{1/2}U(r,s)x\|_H^2\,dr } \\
& & - 2\textrm{Re}\int_s^{T-\eps} \langle M(r)U(r,s)x,[L_su](r)\rangle_H\,dr - \int_s^{T-\eps} \|M(r)^{1/2}[L_su](r)\|_H^2\,dr \\[2mm]
& & =:M_1 + M_2 + M_3.
\eeyy
Similarly
\beyy 
\lefteqn{\int_s^{T-\eps} \|N(r)^{1/ 2}u(r)-N(r)^{- 1 / 2}G(r)^*A(r)^*Q(r)y(r)\|_U^2 \,dr } \\
& & = \int_s^{T-\eps} \|N(r)^{1/2}u(r)\|_U^2\,dr - 2 \textrm{Re} \int_s^{T-\eps} \langle u(r),G(r)^*A(r)^*Q(r)y(r)\rangle_U\,dr \\
& & + \int_s^{T-\eps} \|N(r)^{-1/2}G(r)^*A(r)^*Q(r)y(r)\|_U^2 \,dr,
\eeyy
and using again \eqref{state_Ls} we write
\beyy 
\lefteqn{\int_s^{T-\eps} \|N(r)^{1/2}u(r)-N(r)^{-1/2}G(r)^*A(r)^*Q(r)y(r)\|_U^2 \,dr } \\
& & = \int_s^{T-\eps} \|N(r)^{1/2}u(r)\|_U^2\,dr - 2 \textrm{Re} \int_s^{T-\eps} \langle u(r),G(r)^*A(r)^*Q(r)U(r,s)x\rangle_U\,dr \\
& & - 2 \textrm{Re} \int_s^{T-\eps} \langle u(r),G(r)^*A(r)^*Q(r)[L_su](r)\rangle_U\,dr \\
& & + \int_s^{T-\eps} \|N(r)^{-1/2}G(r)^*A(r)^*Q(r)U(r,s)x\|_U^2\,dr \\
& & + 2 \textrm{Re} \int_s^{T-\eps} \langle N(r)^{-1}G(r)^*A(r)^*Q(r)U(r,s)x,G(r)^*A(r)^*Q(r)[L_su](r)\rangle_U\,dr  \\
& & + \int_s^{T-\eps} \|N(r)^{-1/2}G(r)^*A(r)^*Q(r)[L_su](r)\|_U^2\,dr \\[2mm]
& & =: N_1 + N_2 + N_3 + N_4 + N_5 + N_6\,.
\eeyy
Hence in the right member of \eqref{idbase} the term $N_1$ cancels and we get
\beq \label{decompos}
\begin{array}{l} \displaystyle -\int_s^{T-\eps} \|M(r)^{1 / 2}y(r)\|_H^2\,dr -\int_s^{T-\eps} \|N(r)^{1 / 2}u(r)\|_U^2\,dr + \\[4mm]
\qquad \displaystyle +\int_s^{T-\eps} \|N(r)^{1 / 2}u(r)-N(r)^{- 1 / 2}G(r)^*A(r)^*Q(r)y(r)\|_U^2 \,dr = \\[4mm]
\qquad = M_1 + M_2 + M_3 + N_2 + N_3 + N_4 + N_5+N_6\,.\end{array}
\eeq
Now we note that, by (\ref{Riccintw}) with $x=y$,
\beyy
\lefteqn{M_1 + N_4 =-\int_s^{T-\eps}\|M(r)^{1 / 2}U(r,s)x\|_H^2\,dr }\\
& & + \int_s^{T-\eps} \|N(r)^{- 1 / 2}G(r)^*A(r)^*Q(r)U(r,s)x\|_U^2\,dr \\[2mm]
& &= -\langle Q(s)x,x\rangle_H + \langle Q(T-\eps)U(T-\eps,s)x,U(T-\eps,s)x\rangle_H\,.
\eeyy
Next, we set
\beq\label{Ls_eps}
L^{*\eps}_s v](t) = -G(t)^*A(t)^*\int_t^{T-\eps} U(\sigma,t)^*v(\sigma)\,d\sigma, \quad v\in L^2(s,T-\eps;H);
\eeq
this is the adjoint of $L_s:L^2(s,T-\eps;U)\to L^2(s,T-\eps;H)$. For this operator we need the following property:
\begin{lemma}\label{calcoloLs} Let $L_s$, $L_s^\eps$ be given by \eqref{Ls}, \eqref{Ls_eps}. If $\Psi\in L^\infty(s,T-\eps;\Sigma(H))$, and $v\in L^2(s,T-\eps;U)$, then we have
\beyy
\lefteqn{\int_s^{T-\eps} \langle [L^{*\eps}_s\Psi(\cdot)L_sv](r),v(r)\rangle_U \,dr}\\
& & =- 2\textrm{{\em Re}}\int_s^{T-\eps} \left\langle G(q)^*A(q)^*\left[ \int_q^{T-\eps} U(r,q)^*\Psi(r)U(r,q)dr\right][L_sv](q),v(q)\right\rangle_U\,dq.
\eeyy
\end{lemma}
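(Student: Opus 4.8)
The plan is to reduce the identity to the computation of the operator $L_s^{*\eps}\Psi(\cdot)L_s$ and to exploit the selfadjointness of $\Psi(\cdot)$. Since $L_s^{*\eps}$ is by definition the adjoint of $L_s:L^2(s,T-\eps;U)\to L^2(s,T-\eps;H)$ — an operator which is bounded because its kernel $U(r,q)A(q)G(q)$ obeys $\|U(r,q)A(q)G(q)\|_{{\cal L}(U,H)}\le c\,(r-q)^{\alpha-1}$ by \eqref{UAG}--\eqref{stimaUAG} with $\alpha>0$ — the left-hand side equals $\langle\Psi(\cdot)L_sv,L_sv\rangle_{L^2(s,T-\eps;H)}=\int_s^{T-\eps}\langle\Psi(r)[L_sv](r),[L_sv](r)\rangle_H\,dr$. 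I would then expand $[L_sv](r)=-\int_s^r U(r,q)A(q)G(q)v(q)\,dq$ in both slots — each integrand being a genuine element of $H$ for $r>q$ in view of \eqref{UAG} — obtaining a triple integral of $g(r;q,q'):=\langle\Psi(r)U(r,q)A(q)G(q)v(q),U(r,q')A(q')G(q')v(q')\rangle_H$ over $\{s\le q,q'\le r\le T-\eps\}$.

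Next, since $\Psi(r)=\Psi(r)^*$ one has $g(r;q',q)=\overline{g(r;q,q')}$, so the integral over the square $[s,r]^2$ equals $2\,\textrm{Re}$ of the integral over the triangle $\{s\le q'\le q\le r\}$. I would then apply Fubini to move the $r$-integration inside, over $\{s\le q'\le q\le T-\eps,\ q\le r\le T-\eps\}$; this is legitimate because the triple integral converges absolutely — from $\|U(r,q)A(q)G(q)\|\le c(r-q)^{\alpha-1}$ and the Hardy--Littlewood--Sobolev/Young mapping property of $L_s$ (which carries $L^2$ into $L^{2/(1-2\alpha)}\hookrightarrow L^2$ on the bounded interval), the function $r\mapsto\bigl(\int_s^r(r-q)^{\alpha-1}\|v(q)\|_U\,dq\bigr)^2$ is integrable. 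On the triangle $q'\le q\le r$ I would use the evolution law in the form $U(r,q')A(q')G(q')v(q')=U(r,q)\bigl[U(q,q')A(q')G(q')v(q')\bigr]$, the bracketed term being an element of $H$; this is justified by taking adjoints, since $[-A(q')^*]^{1-\alpha}U(r,q')^*=[-A(q')^*]^{1-\alpha}U(q,q')^*U(r,q)^*$. A short computation based on \eqref{UAG} and the boundedness of $[-A(q)]^\alpha G(q)$ (Hypothesis \ref{hpmain5}) then yields, for any $\zeta\in H$,
\[
\langle\Psi(r)U(r,q)A(q)G(q)v(q),\,U(r,q)\zeta\rangle_H=\overline{\langle\,G(q)^*A(q)^*U(r,q)^*\Psi(r)U(r,q)\zeta,\,v(q)\rangle_U}\,.
\]

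Taking $\zeta=U(q,q')A(q')G(q')v(q')$ and integrating, I would pull the $q$-dependent operator out of the $q'$- and $r$-integrals: since $\|[-A(q)^*]^{1-\alpha}U(r,q)^*\|_{{\cal L}(H)}\le c(r-q)^{\alpha-1}$ (Hypothesis \ref{hpmain3}) is integrable in $r$, the closed operator $G(q)^*A(q)^*=-\bigl([-A(q)]^\alpha G(q)\bigr)^*[-A(q)^*]^{1-\alpha}$ commutes with the Bochner integral $\int_q^{T-\eps}U(r,q)^*\Psi(r)U(r,q)\,dr$, while $\int_s^q U(q,q')A(q')G(q')v(q')\,dq'=-[L_sv](q)$ for a.e.\ $q$. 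This turns the triangle integral into $\int_s^{T-\eps}\overline{\langle-G(q)^*A(q)^*\big[\int_q^{T-\eps}U(r,q)^*\Psi(r)U(r,q)\,dr\big][L_sv](q),\,v(q)\rangle_U}\,dq$, and taking $2\,\textrm{Re}$ (using $\textrm{Re}\,\bar z=\textrm{Re}\,z$) gives exactly the right-hand side of the claimed identity.

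The step I expect to be the main obstacle is the bookkeeping of the unbounded operators $A(q)G(q)$ and $A(q)^*$, so that the substitution for $[L_sv]$, the selfadjoint symmetrization, the Fubini interchange, and the commutation of $G(q)^*A(q)^*$ with the operator-valued integral are all carried out on genuine elements of $H$ or $U$. Everything is in fact controlled by the single estimate $\|[-A(q)^*]^{1-\alpha}U(r,q)^*\|_{{\cal L}(H)}\le c(r-q)^{\alpha-1}$ of Hypothesis \ref{hpmain3}, together with \eqref{UAG}--\eqref{stimaUAG}, the boundedness of $[-A(q)]^\alpha G(q)$ from Hypothesis \ref{hpmain5}, and the elementary fact that $\alpha>0$ makes $(r-q)^{\alpha-1}$ locally integrable. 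An equivalent route, which I would record as a remark, is to compute the kernel of $L_s^{*\eps}\Psi(\cdot)L_s$ directly, split it according to which of the two intermediate times is larger, note that the two pieces are mutually adjoint (again by selfadjointness of $\Psi$), and conclude as above.
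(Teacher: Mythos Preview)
Your proposal is correct and follows essentially the same route as the paper's proof: rewrite the left-hand side as $\int_s^{T-\eps}\langle\Psi(r)[L_sv](r),[L_sv](r)\rangle_H\,dr$, expand $[L_sv](r)$ in both slots to obtain a triple integral, use the selfadjointness of $\Psi(r)$ to reduce the square $[s,r]^2$ to $2\,\textrm{Re}$ of the triangle $\{q'\le q\}$, apply Fubini, factor $U(r,q')=U(r,q)U(q,q')$, and recognize $-[L_sv](q)$. The only difference is that you spell out the analytic justifications (absolute convergence for Fubini, commutation of $G(q)^*A(q)^*$ with the Bochner integral via the $(r-q)^{\alpha-1}$ bound) which the paper leaves implicit.
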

\begin{proof} Indeed we have
\beyy
\lefteqn{\int_s^{T-\eps} \langle [L^{*\eps}_s\Psi(\cdot)L_sv](r),v(r)\rangle_U \,dr = \int_s^{T-\eps} \langle \Psi(r)[L_sv](r),[L_sv](r)\rangle_H} \\
& & = \int_s^{T-\eps} \int_s^r \int_s^r \langle \Psi(r)U(r,\sigma)A(\sigma)G(\sigma)v(\sigma), U(r,q)A(q)G(q)v(q)\rangle_H\,d\sigma dq dr.
\eeyy
The integrand in the last term is a symmetric function $a(\sigma,q)$: we rewrite this term as
\beyy
\lefteqn{\int_s^{T-\eps} \int_s^r \int_s^r a(\sigma, q)\,d\sigma dq dr }\\ 
& & = \int_s^{T-\eps} \int_s^r \left[\int_s^q a(\sigma, q)\,d\sigma + \int_q^r a(\sigma, q)\,d\sigma\right] dq dr \\
& & = \int_s^{T-\eps} \left[\int_s^r \int_s^q a(\sigma, q)\,d\sigma dq + \int_s^r \int_s^\sigma a(\sigma, q)\,dqd\sigma \right]dr  \\
& & = 2 \textrm{Re} \int_s^{T-\eps} \int_s^r \int_s^q a(\sigma, q)\,d\sigma dqdr,
\eeyy
so that 
\beyy
\lefteqn{\int_s^{T-\eps} \langle [L^{*\eps}_s\Psi(\cdot)L_sv](r),v(r)\rangle_U \,dr }\\
& & = 2 \textrm{Re} \int_s^{T-\eps} \int_s^r \int_s^q \langle \Psi(r)U(r,\sigma)A(\sigma)G(\sigma)v(\sigma), U(r,q)A(q)G(q)v(q)\rangle_H\,d\sigma dqdr\\
& & = 2 \textrm{Re} \int_s^{T-\eps} \int_q^{T-\eps} \Bigg\langle G(q)^*A(q)^*U(r,q)^*\Psi(r) U(r,q) \\
& & \qquad \times \int_s^q U(q,\sigma)A(\sigma)G(\sigma)v(\sigma)\,d\sigma, v(q)\Bigg\rangle_U\,drdq\\
& & = -2\textrm{Re} \int_s^{T-\eps} \Bigg\langle G(q)^*A(q)^*\left[\int_q^{T-\eps} U(r,q)^*\Psi(r) U(r,q)\,dr \right] [L_sv](q),v(q)\Bigg\rangle_U\,dq.
\eeyy 
\end{proof}
Let us go back to \eqref{decompos}: we have
\beyy 
\lefteqn{M_2 + N_5 +N_2 = - 2\textrm{Re}\int_s^{T-\eps} \langle M(r)U(r,s)x,[L_su](r)\rangle_H\,dr }\\
& & \quad + 2 \textrm{Re} \int_s^{T-\eps} \langle N(r)^{-1}G(r)^*A(r)^*Q(r)U(r,s)x,G(r)^*A(r)^*Q(r)[L_su](r)\rangle_U\,dr  \\
& & \quad - 2 \textrm{Re} \int_s^{T-\eps} \langle u(r),G(r)^*A(r)^*Q(r)U(r,s)x\rangle_U\,dr  \\
& & = - 2 \textrm{Re}\int_s^{T-\eps} \left\langle [L^{*\eps}_sM(\cdot)U(\cdot,s)x](r),u(r)\right\rangle_U\,dr \\
& & \quad + 2 \textrm{Re}\int_s^{T-\eps} \left\langle [L^{*\eps}_s Q(\cdot)A(\cdot)G(\cdot)N(\cdot)^{-1}G(\cdot)^*A(\cdot)^*Q(\cdot)U(\cdot,s)x](r), u(r)\right\rangle_U\,dr \\
& & \quad - 2 \textrm{Re} \int_s^{T-\eps} \langle G(r)^*A(r)^*Q(r)U(r,s)x, u(r)\rangle_U\,dr .
\eeyy
Using \eqref{Ls_eps} and \eqref{Riccintw} we find 
\beyy 
\lefteqn{M_2 + N_5 + N_2 =2 \textrm{Re} \int_s^{T-\eps} \Bigg\langle G(r)^*A(r)^* \Bigg[ \int_r^{T-\eps} \big[U(\sigma,r)^*M(\sigma)U(\sigma,r)}\\
& & - U(\sigma,r)^*Q(\sigma)A(\sigma)G(\sigma)N(\sigma)^{-1}G(\sigma)^*A(\sigma)^*Q(\sigma)U(\sigma,r)\big]d\sigma  \\
& & -Q(r)\Bigg]  U(r,s)x,u(r)\Bigg\rangle_U\,dr \\
& & = - 2 \textrm{Re} \int_s^{T-\eps} \langle G(r)^*A(r)^* U(T-\eps,r)^*Q(T-\eps)U(T-\eps,s)x,u(r)\rangle_U\, dr \\
& & = 2 \textrm{Re} \langle Q(T-\eps)U(T-\eps,s)x,[L_su](T-\eps)\rangle_H\,.
\eeyy
In addition, 
\beyy 
\lefteqn{M_3 + N_3 + N_6 = -\int_s^{T-\eps} \|M(r)^{1/2}[L_su](r)\|_H^2\,dr}\\
& & - 2 \textrm{Re} \int_s^{T-\eps} \langle u(r),G(r)^*A(r)^*Q(r)[L_su](r)\rangle_U\,dr \\
& & + \int_s^{T-\eps} \|N(r)^{-1/2}G(r)^*A(r)^*Q(r)[L_su](r)\|_U^2\,dr \\
& & = -\int_s^{T-\eps} \langle [L^{*\eps}_s M(\cdot)L_su(\cdot)](r),u(r)\rangle_U\,dr \\
& & - 2\textrm{Re} \int_s^{T-\eps} \langle G(r)^*A(r)^*Q(r)[L_su](r),u(r)\rangle_U\,dr \\
& & + \int_s^{T-\eps} \langle [L^{*\eps}_s Q(\cdot)A(\cdot)G(\cdot)N(\cdot)^{-1}G(\cdot)^*A(\cdot)^*Q(\cdot)L_su](r),u(r)\rangle_U\,dr.
\eeyy
Putting together the first and third term, and using lemma \ref{calcoloLs}, we get 
\beyy 
\lefteqn{M_3 + N_3 +N_6 }\\
& & = - 2\textrm{Re} \int_s^{T-\eps} \langle G(r)^*A(r)^*Q(r)[L_su](r),u(r)\rangle_U\,dr  \\
& & -\int_s^{T-\eps} \langle L^{*\eps}_s [M(\cdot)-Q(\cdot)A(\cdot)G(\cdot)N(\cdot)^{-1}G(\cdot)^*A(\cdot)^*Q(\cdot)L_su](r),u(r)\rangle_U\,dr \\
& & = - 2\textrm{Re} \int_s^{T-\eps} \langle G(r)^*A(r)^*Q(r)[L_su](r),u(r)\rangle_U\,dr +\\
& & + 2 \textrm{Re} \int_s^{T-\eps} \Bigg\langle G(r)^*A(r)^*\Bigg[\int_r^{T-\eps} U(\sigma,r)^*\Big[M(\sigma)\\
& & \qquad -Q(\sigma)A(\sigma)G(\sigma)N(\sigma)^{-1}G(\sigma)^*A(\sigma)^*Q(\sigma)\Big] U(\sigma,r)\,d\sigma\Bigg] [L_su](r),u(r)\Bigg\rangle_U\,dr.
\eeyy
Recalling again \eqref{Riccintw}, we get
\beyy 
\lefteqn{M_3 + N_3 +N_6 }\\
& & \hspace{-3mm} = - 2\textrm{Re} \int_s^{T-\eps} \langle G(r)^*A(r)^*U(T-\eps,r)^*Q(T-\eps)U(T-\eps,r)[L_su](r),u(r)\rangle_U\,dr\\
& & \hspace{-3mm} = 2\textrm{Re} \int_s^{T-\eps}\hspace{-2mm} \int_s^r \langle G(r)^*A(r)^*U(T-\eps,r)^*Q(T-\eps) U(T-\eps,\sigma)A(\sigma)G(\sigma)u(\sigma)\,d\sigma, u(r)\rangle_U\,dr\\
& & \hspace{-3mm} = 2 \textrm{Re}\int_s^{T-\eps} \int_s^r \langle Q(T-\eps)U(T-\eps,\sigma)A(\sigma)G(\sigma)u(\sigma), U(T-\eps,r)A(r)G(r)u(r)\rangle_U\,d\sigma dr;
\eeyy
Since the last integrand is a symmetric function $b(\sigma,r)$, as in the proof of Lemma \ref{calcoloLs} we get
\beyy 
\lefteqn{M_3 + N_3 +N_6 =2 \textrm{Re}\int_s^{T-\eps} \int_s^r b(\sigma,r)\,d\sigma dr} \\
& & = \int_s^{T-\eps} \int_s^r b(\sigma,r)\,d\sigma dr + \int_s^{T-\eps} \int_s^r b(r,\sigma)\,d\sigma dr \\
& & = \int_s^{T-\eps} \int_s^r b(\sigma,r)\,d\sigma dr + \int_s^{T-\eps} \int_\sigma^{T-\eps} b(r,\sigma)\,drd\sigma =\int_s^{T-\eps} \int_s^{T-\eps}b(\sigma,r)\,d\sigma dr, 
\eeyy
so that finally
\beyy 
\lefteqn{M_3 + N_3 +N_6 }\\[2mm]
& & = \int_s^{T-\eps} \int_s^{T-\eps}\langle Q(T-\eps)U(T-\eps,\sigma)A(\sigma)G(\sigma)u(\sigma), U(T-\eps,r)A(r)G(r)u(r)\rangle_U\,d\sigma dr\\[2mm]
& & = \langle Q(T-\eps)[L_su](T-\eps),[L_su](T-\eps)\rangle_H\,.
\eeyy
Summing up, and recalling \eqref{state_Ls} and \eqref{Ls}, we have
\beyy
\lefteqn{M_1 + M_2 + M_3 + N_2 + N_3 + N_4 + N_5+N_6 }\\[2mm]
& & \hspace{-2mm} = -\langle Q(s)x,x\rangle_H + \langle Q(T-\eps)U(T-\eps,s)x,U(T-\eps,s)x\rangle_H \\[2mm]
& & \hspace{-2mm} \quad+ 2 \textrm{Re} \langle Q(T-\eps)U(T-\eps,s)x,[L_su](T-\eps)\rangle_H + \langle Q(T-\eps)[L_su](T-\eps),[L_su](T-\eps)\rangle_H\\[2mm]
& & \hspace{-2mm} = -\langle Q(s)x,x\rangle_H + \langle Q(T-\eps)y(T-\eps),y(T-\eps)\rangle_H\,,
\eeyy
which proves \eqref{idbase} and concludes the proof of Lemma \ref{idcontr}. \qed 
%---------------------------------------------------------------
\section{Proof of Lemma \ref{Riccequiv}} \label{B}
%---------------------------------------------------------------
In order to prove \eqref{Riccint1}, we start from the integral Riccati equation \eqref{Riccintw} and insert in place of $U(T-\eps,s)x$ and $U(r,s)x$ their expressions in terms of $\Phi(T-\eps,s)x$ and $\Phi(r,s)x$, given by \eqref{eqclosedloop}. All calculations are legitimate, since there are singularities only at $T$. Setting for simplicity
\beq\label{K} 
K(q,\sigma) = U(q,\sigma)A(\sigma)G(\sigma)N(\sigma)^{-1} G(\sigma)^*A(\sigma)^*Q(\sigma),
\eeq
we get
$$
\begin{array}{l}\langle Q(s)x,y\rangle_H=\langle Q(T-\eps)\Phi(T-\eps,s)x,U(T-\eps,s)y\rangle_H \\[2mm]
\displaystyle \quad + \left\langle Q(T-\eps)\int_s^{T-\eps} K(T-\eps,\sigma)\Phi(\sigma,s)x\,d\sigma,U(T-\eps,s)y\right\rangle_H \\[4mm]
\displaystyle \quad+\int_s^{T-\eps}\hspace{-1mm} \langle M(r) \Phi(r,s)x,U(r,s)y\rangle_H \,dr \hspace{-1mm}+\hspace{-1mm}\int_s^{T-\eps}\hspace{-1mm}\left\langle M(r) \hspace{-1mm}\int_s^r  \hspace{-1mm}K(r,\sigma)\Phi(\sigma,s)x\, d\sigma,U(r,s)y\right\rangle_H\, \hspace{-1mm}dr \\[4mm]
\displaystyle \quad-\int_s^{T-\eps}  \hspace{-1mm}\left\langle N(r)^{-1} G(r)^*A(r)^*Q(r)\Phi(r,s)x,G(r)^*A(r)^*Q(r) U(r,s)y\right\rangle_H\,dr \\[4mm]
\displaystyle \quad-\int_s^{T-\eps} \left\langle N(r)^{-1} G(r)^*A(r)^*Q(r) \hspace{-1mm}\int_s^r  \hspace{-1mm}K(r,\sigma)\Phi(\sigma,s)x\, d\sigma,G(r)^*A(r)^*Q(r) U(r,s)y\right\rangle_H  \hspace{-1mm}dr\\[4mm]
\displaystyle \ =\langle Q(T-\eps)\Phi(T-\eps,s)x,U(T-\eps,s)y\rangle_H +\int_s^{T-\eps} \langle M(r) \Phi(r,s)x,U(r,s)y\rangle_H \,dr +\\
\displaystyle \quad + I_1+I_2+I_3+I_4,
\end{array}$$
where $I_1,I_2,I_3,I_4$ are the second term and the last three ones of the right member of the first equality. We observe now that if we set
$$g(r):=A(r)G(r)N(r)^{-1} G(r)^*A(r)^*Q(r)\Phi(r,s)x,\quad h(r):=U(r,s)y,$$
then, using \eqref{K} and Fubini-Tonelli's Theorem, we can rewrite the sum of these terms as 
$$\begin{array}{l} 
\displaystyle I_1+I_2+I_3+I_4=\int_s^{T-\eps} \Bigg[ \langle Q(T-\eps)U(T-\eps,\sigma)g(\sigma),U(T-\eps,\sigma)h(\sigma)\rangle_H \\[5mm]
\displaystyle \qquad + \int_\sigma^{T-\eps}\langle M(r)U(r,\sigma)g(\sigma),U(r,\sigma)h(\sigma)\rangle_H\,dr - \langle Q(\sigma)g(\sigma),h(\sigma) \rangle_H \\[4mm]
\displaystyle \qquad - \int_\sigma^{T-\eps}\langle N(r)^{-1} G(r)^*A(r)^*Q(r)U(r,\sigma) g(\sigma),G(r)^*A(r)^*Q(r)U(r,\sigma)h(\sigma)\rangle_U \,dr\Bigg]d\sigma \hspace{-1mm}\\
\displaystyle \ = \int_s^{T-\eps} 0\,d\sigma = 0,
\end{array}$$
where the penultimate equality follows by \eqref{Riccintw}. This proves equation \eqref{Riccint1}.\\
We now prove \eqref{Riccint2}: in a quite similar way, we insert in \eqref{Riccint1}, in place of $U(T-\eps,s)x$ and $U(r,s)x$, their expressions in terms of $\Phi(T-\eps,s)x$ and $\Phi(r,s)x$, given by \eqref{eqclosedloop}: we have 
\beyy
\lefteqn{\langle Q(s)x,y\rangle_H=  \langle Q(T-\eps)\Phi(T-\eps,s)x,\Phi(T-\eps,s)y\rangle_H }\\[2mm]
& & \quad + \left\langle Q(T-\eps)\Phi(T-\eps,s)x,\int_s^{T-\eps} K(T-\eps,\sigma)\Phi(\sigma,s)y\,d\sigma\right\rangle_H \\
& &  \quad + \int_s^{T-\eps}\hspace{-1mm} \langle M(r)\Phi(r,s)x,\Phi(r,s)y\rangle_H\,dr+\hspace{-1mm} \int_s^{T-\eps} \hspace{-1mm}\left\langle M(r)\Phi(r,s)x,\hspace{-1mm}\int_s^rK(r,\sigma)\Phi(\sigma,s)y \hspace{-1mm}\right\rangle_H\,\hspace{-1mm}d\sigma \\
& &=\langle Q(T-\eps)\Phi(T-\eps,s)x,\Phi(T-\eps,s)y\rangle_H +  \int_s^{T-\eps} \langle M(r)\Phi(r,s)x,\Phi(r,s)y\rangle_H\,dr+ J_1+J_2,
\eeyy
where $J_1$ and $J_2$ are the second and fourth term of the right member of the first equality. Recalling \eqref{K}, we set
$$g(\sigma)=\Phi(\sigma,s)x, \qquad h(\sigma) = A(\sigma)G(\sigma)N(\sigma)^{-1}G(\sigma)^*A(\sigma)^*Q(\sigma)\Phi(\sigma,s)y,$$  
and using Fubini-Tonelli's Theorem we can write
\beyy
J_1+J_2  & = & \int_s^{T-\eps} \Bigg[ \langle Q(T-\eps)\Phi(T-\eps,\sigma) g(\sigma),U(T-\eps,\sigma)h(\sigma)\rangle_H \\
&  &\quad \quad \ + \int_\sigma^{T-\eps} \langle M(r)\Phi(r,\sigma) g(\sigma),U(r,\sigma) h(\sigma)\rangle_H\,dr\Bigg] d\sigma \\
& = & \int_s^{T-\eps} \langle Q(\sigma)g(\sigma),h(\sigma)\rangle_H\,d\sigma \\
& = & \int_s^{T-\eps} \langle N(\sigma)^{-1} G(\sigma)^*A(\sigma)^*Q(\sigma)\Phi(\sigma,s)x,  G(\sigma)^*A(\sigma)^*Q(\sigma)\Phi(\sigma,s)y\rangle_H\,d\sigma,
\eeyy
where in the penultimate equality we have used \eqref{Riccint1}. This proves equation \eqref{Riccint2} and concludes the proof. \qed
%---------------------------------------------------------------

%----------------------------------------------------------------------
\end{document}